\documentclass{amsart}
\usepackage[utf8]{inputenc}
\usepackage[english]{babel}

%%% MACROS %%%%

\usepackage[margin=2.5cm]{geometry}
\usepackage{enumerate}
\usepackage{float}
\usepackage[final]{graphicx}
\usepackage{epstopdf} 

\usepackage{tabularx}
\newcolumntype{C}[1]{>{\centering\arraybackslash}m{#1}}

\usepackage{caption}
\usepackage{subcaption}
\usepackage{enumitem}
\usepackage{comment}
\usepackage{soul}

\usepackage{tikz-cd}
\usepackage{amsmath, multirow}
\usepackage{amsfonts}
\usepackage{amssymb}
\usepackage{amsthm}
\usepackage{hyperref}
\usepackage{cleveref}

\DeclareCaptionSubType[alph]{figure}
\captionsetup[subfigure]{labelformat=simple, labelfont = rm, labelsep=period}

\usepackage[T1]{fontenc}
\usepackage{mathrsfs}
\usepackage{mathtools}

\usepackage{tikz}
\usetikzlibrary{decorations.markings}
\usetikzlibrary{decorations.pathreplacing}
\usetikzlibrary{arrows,shapes,positioning,patterns}
\usetikzlibrary{knots}
\tikzstyle{none}=[inner sep=0pt]
\pgfdeclarelayer{edgelayer}
\pgfdeclarelayer{nodelayer}
\pgfsetlayers{edgelayer,nodelayer,main}
\usepackage{url} 
\usepackage[all]{xy}

\newcommand{\op}{\mathrm{op}}

\newcommand{\bN}{\mathbb N}
\newcommand{\bK}{\mathbb K}

\newcommand{\bC}{\mathbb C}

\newcommand{\bZ}{\mathbb Z}

\renewcommand{\H}{\mathrm{H}}

\newcommand{\Mod}{\mathbf{Mod}}

\newcommand{\x}{\times}

\newcommand{\Cone}{{\mathfrak{Cone}~}}

\theoremstyle{plain}
\newtheorem{thm}{Theorem}[section]
\newtheorem{prop}[thm]{Proposition}
\newtheorem{lemma}[thm]{Lemma}
\newtheorem{cor}[thm]{Corollary}
\theoremstyle{remark}
\newtheorem{rem}[thm]{Remark}

\newtheorem{example}[thm]{Example} 
\newtheorem{q}[thm]{Question}

\theoremstyle{definition}
\newtheorem{defn}[thm]{Definition}

\newtheorem*{crit*}{Criterion~B}

\definecolor{aquamarine}{rgb}{0.5, 1.0, 0.83}
\definecolor{princetonorange}{rgb}{1.0, 0.56, 0.0}
\definecolor{caribbeangreen}{rgb}{0.0, 0.8, 0.6}
\definecolor{bunired}{rgb}{0.8, 0.0, 0.0}
\definecolor{cdgreen}{rgb}{0.0, 0.42, 0.24}
\definecolor{lavender(floral)}{rgb}{0.71, 0.49, 0.86}
\definecolor{bluedefrance}{rgb}{0.19, 0.55, 0.91}
\definecolor{iris}{rgb}{0.35, 0.31, 0.81}
\definecolor{darkgreen}{rgb}{0.33, 0.42, 0.18}

\newcommand{\tB}{{\tt B}}

\newcommand{\tG}{{\tt G}}
\newcommand{\tH}{{\tt H}}
\newcommand{\tK}{{\tt K}}
\newcommand{\tI}{{\tt I}}
\newcommand{\tD}{{\tt D}}
\newcommand{\tT}{{\tt T}}
\newcommand{\tR}{{\tt R}}
\newcommand{\tS}{{\tt S}}
\newcommand{\tP}{{\tt P}}
\newcommand{\tQ}{{\tt Q}}

\newcommand{\tA}{{\tt A}}
\newcommand{\tM}{{\tt M}}

\newcommand{\Comp}[2]{C_{#2}(#1)}

\DeclareMathOperator{\MH}{\mathrm{MH}}
\DeclareMathOperator{\MC}{\mathrm{MC}}
\DeclareMathOperator{\Hom}{\mathrm{Hom}}

\renewcommand{\bC}{\mathbf{C}}

\newcommand{\Graph}{\mathbf{Graph}}
\newcommand{\CGraph}{\mathbf{CGraph}}
\renewcommand{\Cone}{\text{Cone}}

\newcommand{\bD}{\mathbf{D}}
\newcommand{\cM}{\mathcal{M}}
\newcommand{\cP}{\mathcal{P}}
\newcommand{\cF}{\mathcal{F}}

\newcommand{\Digraph}{\mathbf{Digraph}}
\newcommand{\CDigraph}{\mathbf{CDigraph}}

\newcommand{\Rep}{\mathbf{Rep}}

\newcommand{\Quiver}{\mathbf{Quiver}}
\newcommand{\CQuiver}{\mathbf{CQuiver}}

\newcommand{\cV}{\mathcal{V}}

\title[Weak categorical quiver minor theorem and its applications]{The weak categorical quiver minor theorem and its applications: matchings, multipaths, and magnitude cohomology}

\author{Luigi Caputi}
\author{Carlo Collari}
\author{Eric Ramos}

\begin{document}

\maketitle

\begin{abstract}
Building upon previous works of Proudfoot and Ramos, and using the categorical framework of Sam and Snowden, we extend the weak categorical minor theorem from undirected graphs  to  quivers. As case of study, we investigate the consequences on the homology of  multipath complexes;
eg.~on its torsion. Further, we prove a comparison result: we show that, when restricted to directed graphs without oriented cycles, multipath complexes and matching complexes yield functors which commute up to a blow-up operation on directed graphs. We use this fact to compute the homotopy type of matching complexes for a certain class of bipartite graphs also known as half-graphs or ladders. We complement the work with a study of the (representation) category of cones, and with analysing related consequences on magnitude cohomology  of quivers.
\end{abstract}

\section{Introduction}

The graph minor theorem of Robertson and Seymour~\cite{zbMATH02134346}  states that the undirected graphs, partially ordered by the graph minor relationship, form a well-quasi-ordering.  If one restricts to graphs with bounded combinatorial genus, this fact has a categorical enhancement which was recently explored by Miyata and the third author in~\cite{miyata2023graph}. It is shown in~\cite{miyata2023graph} that the (weak) categorical minor theorem   is related to Noetherian properties of the (representation) category of undirected graphs with bounded genus. Remarkably, this relationship has non-trivial topological and combinatorial consequences. %The purpose of this paper is to
With a view to such consequences, in this paper we extend the (weak) categorical version of the graph minor theorem to quivers and directed graphs.
Inspired by previous works, and in particular by~\cite{trees, proudfoot2019functorial, proudfoot2022contraction}, we  borrow techniques from representation theory of categories. 
Sam and Snowden, in fact, developed a fascinating and powerful machinery to prove that a certain representation category is Noetherian~\cite{sam2017grobner}. Their approach makes use of combinatorial properties  --   called \emph{(quasi-) Gr\"obner} properties --  on the base category:  if a category $\bC$ is quasi-Gr\"obner, then its category of representations $\Rep_R\bC$, over a Noetherian ring $R$, is Noetherian.

The category $\Graph_{\leq g}^\op$ of undirected graphs with bounded genus $g$, and opposite minor morphisms, was shown to be quasi-Gr\"obner in \cite{proudfoot2022contraction}. 
As a consequence, all subrepresentations of finitely generated representations of $\Graph_{\leq g}^\op$ are also finitely generated.
This fact has some striking applications to the study of certain simplicial complexes associated to graphs, such as those arising from monotone properties of graphs, cf.~\cite{proudfoot2022contraction, miyata2023graph}.
The study of such simplicial complexes and, in particular, of matching complexes is a well-established~\cite{MR2022345, torsion,JONSSON20081504} and yet vibrant area of research~\cite{VZIPMatch,caterpillar,HighMatch,PolyMatch,DiscreteMatch}. In particular, the behaviour of the torsion appearing in the homology of matching complexes has been a subject of considerable attention -- see~\cite{torsion, MR2731551}, and references therein. 
Using the fact that the category $\Graph_{\leq g}^\op$ is quasi-Gr\"obner, Miyata and the third author approached the study of torsion of matching complexes from a different perspective. They proved, in \cite{miyata2023graph}, that if one considers graphs of bounded genus, then the homology of their matching complexes has universally bounded torsion. 
It is an open conjecture whether a similar statement holds true when considering the whole category of graphs, with no restriction on the genus -- see \cite[Conjecture~3.3]{miyata2023graph}.

In this work, we go beyond the undirected case, and  prove that also the category of quivers~$\Quiver$, and minor morphisms, is combinatorially well-behaved;

\begin{thm}[Weak categorical quiver minor theorem]\label{thm:contractcatfgintro}
    The category  $\Quiver_{\leq g}^\op$ is quasi-Gr\"obner. 
\end{thm}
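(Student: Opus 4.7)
The plan is to reduce to the undirected case of Proudfoot and Ramos~\cite{proudfoot2022contraction} by treating a quiver as an undirected graph decorated with a binary orientation on each edge. By the Sam--Snowden criterion, it suffices to exhibit a Gr\"obner category $\mathcal{A}$ together with an essentially surjective functor $\Phi \co \mathcal{A} \to \Quiver_{\leq g}^\op$ satisfying property (F).

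Concretely, let $\mathcal{A}_G$ be a Gr\"obner category presenting $\Graph_{\leq g}^\op$, as in~\cite{proudfoot2022contraction}. I would define the objects of $\mathcal{A}$ to be pairs $(X,\omega)$, where $X$ is an object of $\mathcal{A}_G$ and $\omega$ assigns an orientation to each edge of the underlying graph of $X$; morphisms $(X,\omega) \to (X',\omega')$ are morphisms $X \to X'$ in $\mathcal{A}_G$ whose induced action on edges sends $\omega$ to $\omega'$. The functor $\Phi$ then sends $(X,\omega)$ to the corresponding quiver. Essential surjectivity is inherited from that of $\mathcal{A}_G \to \Graph_{\leq g}^\op$, while property (F) follows because, for any target quiver $Q$, property (F) in $\mathcal{A}_G$ yields finitely many underlying-graph factorisations, and each admits only finitely many orientation-compatible lifts.

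It then remains to verify that $\mathcal{A}$ is itself Gr\"obner. Directedness follows immediately from the undirected case, while the well-quasi-ordering on principal generators should follow by combining the well-quasi-ordering on $\mathcal{A}_G$ with Higman's lemma applied to the two-letter alphabet $\{+,-\}$ of edge orientations, ordering morphisms in $\mathcal{A}$ first by the underlying morphism in $\mathcal{A}_G$ and then lexicographically by the orientation data. The main technical obstacle I expect to encounter lies precisely here: in the directed setting, contracting an oriented edge identifies its endpoints and can create parallel-edge configurations whose orientation bookkeeping is genuinely richer than in the undirected case (for instance two parallel edges oriented coherently versus oppositely produce inequivalent quivers sharing the same underlying graph). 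Ensuring that the proposed Gr\"obner order is preserved under such operations, and is compatible with the composition of minor morphisms, requires a careful case analysis of how orientations interact with the edge contractions and deletions that generate $\mathcal{A}_G$.
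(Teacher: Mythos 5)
Your overall strategy is the paper's: rigidify, decorate the Proudfoot--Ramos category with orientation data, and transfer along an essentially surjective functor with property (F). The essential surjectivity and property (F) parts of your plan are fine. The gap is at the central step, which you yourself flag: the claim that your orientation-decorated category $\mathcal{A}$ is Gr\"obner. The proposed derivation -- the well-quasi-ordering on $\mathcal{A}_G$ combined with Higman's lemma on the alphabet $\{+,-\}$, ordered lexicographically -- does not prove Noetherianity of the posets $|\Hom_{\mathcal{A}}((X,\omega),-)|$. Given infinitely many morphisms $f_i\colon (X,\omega)\to (X_i,\omega_i)$, what is needed is a pair $i<j$ together with an actual morphism $h$ \emph{in $\mathcal{A}$}, i.e.\ one whose edge map respects the orientations, satisfying $h_*f_i=f_j$. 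Noetherianity of $\mathcal{A}_G$ only produces such an $h$ at the level of underlying graphs, and nothing forces it to be orientation-compatible; applying Higman's lemma to the orientation data afterwards does not repair this, because which edges get compared depends on the embedding $h$ whose existence is exactly what is at stake. What is really required is a \emph{labelled} version of the underlying well-quasi-ordering (Kruskal-type) theorem, not the unlabelled one refined lexicographically. (The orderability half of the Gr\"obner condition for $\mathcal{A}$ also needs an argument; ``directedness follows immediately'' is an assertion, not a proof, though this half is the easier one.)

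This is precisely how the paper closes the step you left open: the orientation data is encoded as vertex labels in the finite set $\{s,t\}$ (each non-root vertex records whether it is the source or the target of the spanning-tree edge towards the root, and the finitely many non-tree edges carry a chosen orientation), so that the decorated category $P\Quiver_g$ becomes a full subcategory of the labelled category $\mathcal{P}\mathcal{G}_{g,S}$ of \cite{proudfoot2022contraction}, which is proven Gr\"obner there for any finite (or well-quasi-ordered) label set; Gr\"obnerness then descends to full subcategories by \cite[Proposition 4.4.2]{sam2017grobner}. If you want to keep your edge-decorated formulation, you must likewise invoke (or reprove) a labelled well-quasi-ordering statement rather than deduce it from the unlabelled one. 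One further point: your choice of ``$\mathcal{A}_G$ presenting $\Graph_{\leq g}^\op$'' silently absorbs deletions; the paper treats them separately, noting that every minor morphism factors as a deletion followed by a contraction and that only finitely many deletions terminate at a fixed quiver because deletions strictly drop genus (its lemma on the embedding $\bigsqcup_{q\leq g}\Quiver_q^\op\hookrightarrow\Quiver_{\leq g}^\op$); in your setup you should verify that orientation lifts behave correctly for the deletion part as well, which is routine but should be stated.
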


As a consequence, simplicial complexes associated to quivers of bounded genus, such as those arising from monotone properties of directed graphs, are finitely generated (under some additional mild assumptions). 
As case of study we consider \emph{multipath complexes}~\cite{jason}, which also include cycle-free chessboard complexes~\cite{Omega}.
The interest in studying the homology of multipath complexes stems from their subtle relations with Hochschild homology~\cite{turner} and symmetric homology of algebras~\cite{AultFed,AultNoFed}.
Furthermore, multipath complexes are related to, and sometimes coincide with, matching complexes, see~\cite{monotonecohm22}.
In the spirit of \cite{miyata2023graph}, we use Theorem~\ref{thm:contractcatfgintro}  to investigate  the global behaviour of the homology of multipath complexes. Let $X\colon \Quiver^\op\to \textbf{SimplComp}$ be the functor associating to each quiver its multipath complex. 
First, we prove that, for any fixed $i,g\in \bN$, the homology $\mathrm{H}_i(X(-);\bZ)\colon \Quiver^\op_{\leq g} \to \Mod_R$ is a finitely generated  $\Quiver^\op_{\leq g}$-module (Proposition~\ref{prop:homologyfgmulti}). Then, we show that the torsion in $\mathrm{H}_i(X(-);\bZ)$ is universally bounded for quivers of bounded genus (Proposition~\ref{prop:torsionmultipaths}). 

In a different direction, we analyse further the connection between matching and multipath complexes.
To this end, we introduce the notion of blow-up~$B(\tG)$ of a directed graph~$\tG$, see~Definition~\ref{def:blowup}.
We show, in Theorem~\ref{thm:multi=match}, that the multipath complex of a directed graph $\tG$ without oriented cycles and the matching complex of the underlying undirected graph of $B(\tG)$ are isomorphic. 
The blow-up construction defines a functor $\mathbf{B}$ from the category of directed graphs without oriented cycles (and injective morphisms of digraphs) $\Digraph_o$ to the opposite of the category of undirected graphs (and minor morphisms)~$\Graph^{\op}$. Then, Theorem~\ref{thm:multi=match} can be restated by saying that the diagram 
    \begin{equation}\label{eq:commdgm}
	\begin{tikzcd}
		\Digraph_o\arrow[r,"\mathbf{B}"]\arrow[dr,"X"']  & \Graph^{\op}\arrow[d,"M"]\\
		&  \mathbf{SimpCompl}
	\end{tikzcd}     
    \end{equation}
is commutative up to isomorphism of simplicial complexes; see also Corollary~\ref{cor:commdiagram}. Here, we have denoted with~$M$ the functor associating to each undirected graph its matching complex. We infer the following direct comparison result:

\begin{thm}\label{thm:agreementintro}
Multipath complexes of directed graphs without oriented cycles are isomorphic to (joins of) matching complexes of bipartite graphs.
\end{thm}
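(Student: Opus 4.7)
The plan is to derive Theorem~\ref{thm:agreementintro} as a direct consequence of Theorem~\ref{thm:multi=match} --- equivalently, the commutativity up to isomorphism of diagram~\eqref{eq:commdgm} --- combined with the standard behaviour of matching complexes under disjoint unions. Theorem~\ref{thm:multi=match} already supplies an isomorphism of simplicial complexes $X(\tG)\cong M(\underline{B(\tG)})$, where $\underline{B(\tG)}$ denotes the underlying undirected graph of the blow-up. It therefore suffices to verify two claims: (i) $\underline{B(\tG)}$ is bipartite, and (ii) the matching complex of a disjoint union of graphs is the simplicial join of the matching complexes of the summands.

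For claim (i) I would unpack Definition~\ref{def:blowup}: the blow-up doubles each vertex $v$ of $\tG$ into a tail-copy $v^+$ and a head-copy $v^-$, and converts each arrow $u\to v$ of $\tG$ into an undirected edge $\{u^+,v^-\}$. The partition $V(\underline{B(\tG)})=\{v^+\mid v\in V(\tG)\}\sqcup\{v^-\mid v\in V(\tG)\}$ is then manifestly a $2$-colouring. Under this identification a matching in $\underline{B(\tG)}$ is exactly a subset of arrows of $\tG$ using each vertex at most once as a tail and at most once as a head, which, in the absence of oriented cycles, is precisely a multipath --- this is the content already packaged in Theorem~\ref{thm:multi=match}. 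Claim (ii) is classical: a matching in $G_1\sqcup G_2$ is the disjoint union of a matching in $G_1$ and a matching in $G_2$, whence $M(G_1\sqcup G_2)\cong M(G_1)\ast M(G_2)$. Applying this inductively to the connected components of $\underline{B(\tG)}$, each of which is bipartite as a subgraph of the bipartite $\underline{B(\tG)}$, yields the desired join decomposition into matching complexes of bipartite graphs.

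The main point requiring care is bookkeeping in the blow-up construction, in particular the treatment of vertices of $\tG$ that are sources or sinks: for such a vertex one of the two copies $v^\pm$ is isolated in $\underline{B(\tG)}$, and one should check that these isolated vertices only contribute trivial (one-point) factors to the join and do not disturb the comparison with $X(\tG)$. Once this minor technicality is dispatched, the statement is essentially a repackaging of Theorem~\ref{thm:multi=match} in light of the evident bipartite structure of the blow-up, together with the multiplicativity of the matching complex functor over disjoint unions.
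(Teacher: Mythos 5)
Your proposal is correct and takes essentially the same route as the paper: Theorem~\ref{thm:agreementintro} is inferred directly from Theorem~\ref{thm:multi=match} together with Remark~\ref{rem:blowup_is_alternating} (blow-ups are alternating, hence their underlying graphs are bipartite) and the multiplicativity of the matching complex over disjoint unions. One cosmetic inaccuracy: Definition~\ref{def:blowup} does not double sources, sinks, or isolated vertices (it sets $B(\tG,v)=\tG$ when the in- or out-degree of $v$ is zero), rather than producing an isolated extra copy as you describe, but this has no bearing on the argument since matching complexes are unaffected by isolated vertices and the resulting graph is bipartite in either description.
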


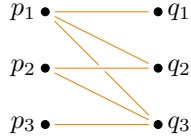
\begin{figure}
\begin{tikzpicture}[scale = 1.5]
    \node (a) at (0,1) {};
    \node (b) at (0,0.5) {};
    \node (c) at (0,0) {};
    \node (ap) at (1,1) {};
    \node (bp) at (1,0.5) {};
    \node (cp) at (1,0) {};
    
    \node at (0,1) [left] {$ p_1$};
    \node at (0,0.5) [left] {$p_2$};
    \node at (0,0) [left] {$p_3$};
    \node at (1,1) [right] {$q_1$};
    \node at (1,0.5) [right] {$q_2$};
    \node at (1,0) [right] {$q_3$};

    \draw[black, fill] (a) circle (.035);
    \draw[black, fill] (b) circle (.035);
    \draw[black, fill] (c) circle (.035);
    \draw[black, fill] (ap) circle (.035);
    \draw[black, fill] (bp) circle (.035);
    \draw[black, fill] (cp) circle (.035);

    \draw[orange] (a) -- (ap);
    \draw[orange] (a) -- (bp);
    \draw[orange] (a) -- (cp);
        \draw[white, fill] (0.5,0.5) circle (.035);
    \draw[orange] (b) -- (bp);
    \draw[orange] (b) -- (cp);
    \draw[orange] (c) -- (cp);    
\end{tikzpicture}
\caption{The bipartite graph $\tB_3$.}
\label{fig:blowupintro}
\end{figure}
  
Diagram~\eqref{eq:commdgm} and Theorem~\ref{thm:agreementintro} show that, when restricting to digraphs without oriented cycles, computations of matching complexes can be obtained by equivalent computations of multipath complexes, in turn providing a way to compute the homotopy type of new families of matching complexes. We give here a main example. Let $\tB_n$ be the undirected graph --  {sometimes called half-graph} or ladder~\cite{NESETRIL2021103223} -- on $2n$ vertices $p_1, \dots, p_n$ and $q_1, \dots, q_n$, and edges $(p_i, q_j)$ for all~$i\leq j$ -- see Figure~\ref{fig:blowupintro}.

\begin{thm}\label{thm:matchingof tr}
    The matching complex of $\tB_n$ is either contractible or homotopy equivalent to a wedge of spheres. 
\end{thm}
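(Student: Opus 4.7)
The plan is to reduce the problem to a calculation on multipath complexes. The ladder $\tB_n$ arises as the blow-up $B(\tG_n)$ of the transitive tournament $\tG_n$ on $[n]$ (the acyclic digraph with a single directed edge $i\to j$ for every $i<j$), so Theorem~\ref{thm:agreementintro} yields a simplicial isomorphism
$$M(\tB_n)\;\cong\;X(\tG_n).$$
Unwinding the definitions, a $k$-simplex of $X(\tG_n)$ corresponds to a set of $k+1$ pairs $(i_a,j_a)$ with $i_a\le j_a$ whose first coordinates are pairwise distinct and whose second coordinates are also pairwise distinct; equivalently, a partial injection $f\colon[n]\to[n]$ with $i\le f(i)$ on its domain. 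This ``staircase'' description of $M(\tB_n)$ is considerably more tractable than the original one.

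With the combinatorial model in hand, I would prove that $X(\tG_n)$ is shellable. Order the facets lexicographically and verify, using the linear order on $[n]$, that for each facet $F$ the intersection of $F$ with the union of all earlier facets is pure of codimension one: in practice, the pairs of $F$ that can be replaced are exactly those that are lex-minimal subject to the domination constraint $i\le f(i)$. By the Bj\"orner--Wachs theorem, shellability implies that $X(\tG_n)$ is either contractible or a wedge of spheres of top dimension, which is exactly the conclusion of the theorem.

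The main obstacle is the verification of the shelling condition. The constraint $i\le j$ obstructs naive pairings and forces a case analysis on the behavior of the ``last'' pair $(i,j)$ in a facet, especially near the boundary (when $(i,j)=(i,i)$ or $(1,n)$). An alternative, possibly cleaner route is to construct an explicit acyclic discrete Morse matching on $X(\tG_n)$: pair each non-critical multipath $\mu$ with $\mu\triangle\{e\}$, where $e$ is the lex-smallest pair whose toggling produces another multipath. Acyclicity then reduces to showing that no ``swap cycle'' can arise, a verification that uses the transitivity of the order $<$ in an essential way. Once this is established, the critical cells concentrate in a single dimension, and Forman's theorem delivers the wedge of spheres (or contractibility, should only a single critical cell remain).
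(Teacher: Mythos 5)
Your first step coincides exactly with the paper's: identify $\tB_n$ with the blow-up of the transitive tournament $\tT_n$ and invoke the blow-up comparison (Theorem~\ref{thm:multi=match}) to get $M(\tB_n)\cong X(\tT_n)$, together with the ``partial injection'' description of the simplices. The divergence is in what happens next. The paper finishes by citing the known computation of the homotopy type of multipath complexes of transitive tournaments (\cite[Theorem~5.1]{jason}), which is precisely the statement that $X(\tT_n)$ is contractible or a wedge of spheres. You instead propose to reprove that computation via a lexicographic shelling or an explicit discrete Morse matching --- but you never carry out either argument. The shelling condition is exactly the point where all the work lies, and you acknowledge it as ``the main obstacle'' without resolving it; likewise, for the Morse-theoretic alternative you assert, but do not prove, that your greedy toggling rule is acyclic and that the critical cells sit in a single dimension. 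As it stands, the crux of the theorem is asserted rather than proved, so there is a genuine gap: either you must complete the shelling/Morse argument in detail, or you should simply cite the existing result on $X(\tT_n)$, in which case your proof becomes the paper's.

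Two further cautions if you do pursue your route. First, $X(\tT_n)$ is not pure: already for $\tT_3$ (vertices $0,1,2,3$) the multipaths $\{(0,1),(1,2),(2,3)\}$ and $\{(0,2),(2,3)\}$ are both facets, of different dimensions. So the claim that the intersection of a facet with the earlier ones is ``pure of codimension one'' and the conclusion ``wedge of spheres of top dimension'' cannot be taken in the pure Bj\"orner--Wachs framework; you would need nonpure shellability, which only yields a wedge of spheres of possibly varying dimensions (still enough for the statement, but your sketch's internal claims need to be restated accordingly). Second, naive lex-smallest toggling matchings on matching-type complexes are notoriously prone to failures of acyclicity or to producing critical cells in many dimensions; the known computations of such homotopy types (e.g.\ for matching complexes of paths and forests, or for $X(\tT_n)$ itself in \cite{jason}) require a more carefully designed matching or decomposition, so the verification you defer is genuinely nontrivial rather than routine.
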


We complement the paper with two more main directions. The first, regarding  the theoretical description of the combinatorial properties of the category of quivers, aims to shed light on the analog of  \cite[Conjecture~3.3]{miyata2023graph} for quivers. Trying to enlarge the category for which the weak categorical minor theorem is true, we prove in Proposition~\ref{prop:cones} that the category of cones of graphs with bounded genus is also quasi-Gr\"obner.  This is beneficial to (partially) get around the restriction of bounded genus. For instance, the cone category includes such families as the wheel graphs, which are cones over cycles, and the Thagomizer graphs, which are cones over star trees. The second direction concerns some applications of the categorical machinery of Sam and Snowden to homology theories of digraphs which do not (directly) arise from monotone properties of digraphs. Our main example is \emph{magnitude homology} of graphs~\cite{richardHHA}. Magnitude homology is the categorification of magnitude, as defined by Leinster -- see, for example, the book~\cite{LeinsterBook} -- capturing  many interesting properties of graphs~\cite{gomi2019magnitude, MR4250513, asao, asao_hiraoka_kanazawa_2023, tajima2023causal}. In \cite{torsionCC}, the first two authors proved that magnitude cohomology of undirected graphs is finitely generated; in turn, getting new insight about the torsion in magnitude homology -- see also \cite{MR4275098,SadzanovicSummers}. We extend in Section~\ref{sec:magnitude} the results of \cite{torsionCC} to the category of quivers. Using the work of Asao~\cite{asao}, we also get in Corollary~\ref{cor:torionPH} some insight on the torsion of the path homology of quivers, introduced in~\cite{Grigoryan}.

We conclude with a perspective on homology theories of digraphs and quivers, whose interest in recent years has skyrocketed due to connections to persistence and topological data analysis. %; see also~\cite{primo, caputi2023hochschild}. 
If \cite[Conjecture~3.3]{miyata2023graph} were true, we could extend the results of this paper to all (directed) graphs, without restriction on the genus. In this work, we focused on two main examples of cohomology theories of simplicial complexes deriving from quivers. However, in view of a better understanding of the (weak) categorical minor theorem, it would be interesting to investigate  also further cohomology theories of graphs, quivers and categories. To name a few, some recent intriguing examples, related to magnitude homology, are for example reachability homology~\cite{hepworth2023reachability, caputi2023reachability}, spectral homology~\cite{ivanov2023nested}, or those introduced in~\cite{ivanov2022simplicial,roff2023iterated}.

\subsection*{Conventions and notation}

All (directed) graphs and quivers are assumed to be finite, and are denoted in typewriter font, e.g.~$\tQ,\tG$.  Unless otherwise specified, $R$ will denote a Noetherian commutative ring with identity.
All categories are in bold font. We mainly use the following categories of graphs:~\\

\begin{tabular}{ll}
$\Graph$ & undirected graphs and minor morphisms\\
$\Digraph$ & digraphs and minor morphisms\\
$\Quiver$ & quivers and minor morphisms\\
$\CGraph$ & undirected graphs and contractions\\
$\CDigraph$ & digraphs and contractions\\
$\CQuiver$ & quivers and contractions\\
$\Digraph_o$ &  directed graphs without directed cycles and injective morphisms of digraphs \\
$\Graph_g$ & undirected graphs of genus $g$ and contractions\\
$\Digraph_g$ & digraphs of genus $g$ and contractions\\
$\Quiver_g$ & quivers of genus $g$ and contractions\\
$\CGraph_{\leq g}$ & undirected graphs of genus at most $g$ and contractions\\
$\CDigraph_{\leq g}$ & digraphs of genus at most $g$ and contractions\\
$\CQuiver_{\leq g}$ & quivers of genus at most $g$ and contractions\\
$\Graph_{\leq g}$ & undirected graphs of genus at most $g$ and minor morphisms\\
$\Digraph_{\leq g}$ & digraphs of genus at most $g$ and  minor morphisms\\
$\Quiver_{\leq g}$ & quivers of genus at most $g$ and  minor morphisms\\
$\Cone(\Quiver_{\leq g})$ & cone over quivers of genus at most $g$ and minor morphisms of the base quiver
\end{tabular}~\\~\\
While we listed them here for the readers' convenience, the precise definition of these categories will be given throughout the paper.

\section{Basic notions}

In this section, we recall some basic notions and set the notations needed throughout.

\subsection{Quivers}
Recall that a (finite) \emph{quiver} $\tQ$ consists of two finite sets $V(\tQ)$ and $E(\tQ)$, together with a pair of functions
$s,t\colon E(\tQ) \to V(\tQ)$. 
The elements of~$V(\tQ)$ are called \emph{vertices}, while the elements of~$E(\tQ)$ are called~\emph{edges}.
For each $e\in E(\tQ)$ the vertices~$s(e) $ and~$t(e)$ are the \emph{source} and the \emph{target} of $e$, respectively, and they are collectively called the \emph{endpoints} of~$e$.  
A \emph{morphism of quivers} $f \colon \tQ \to {\tt Q'}$ is given by a pair of functions $f_V\colon V(\tQ) \to V(\tQ')$ and $f_E\colon E(\tQ) \to E(\tQ')$, such that the two  diagrams 
	\begin{center}
	\begin{tikzcd}
		E(\tQ)\arrow[d,"f_E"'] \arrow[r, "s"] & V(\tQ) \arrow[d,"f_V"]\\
		E(\tQ') \arrow[r, "s'"] & V(\tQ')
	\end{tikzcd}
 \quad
 and \quad
	\begin{tikzcd}
		E(\tQ)\arrow[d,"f_E"'] \arrow[r, "t"] & V(\tQ) \arrow[d,"f_V"]\\
		E(\tQ') \arrow[r, "t'"] & V(\tQ')
	\end{tikzcd}  
	\end{center}	
commute. Note that morphisms of quivers can send an edge to a self-loop.

Special cases of quivers are \emph{digraphs}. These are quivers such that each edge $e$ is completely determined by the (ordered) pair of its endpoints $(s(e), t(e))$. An \emph{undirected graph} $\tG$ is a quiver together with a map $r\colon E(\tG)\to E(\tG)$, called \emph{reflection}, which is an involution such that $s\circ r =t$.
An {\em undirected edge} in an undirected graph refers to a pair of edges in the underlying quiver which are swapped by the reflection. 
Note that any morphism of quivers between two undirected graphs commutes with the respective reflections.

\begin{rem}
We will often identify each edge $e$ in a digraph with the ordered pair $(s(e), t(e))$.
Similarly, if the quiver underlying an undirected graph is a digraph, we will identify each undirected edge with the unordered collection of its endpoints. 
\end{rem}

A \emph{subquiver}~$\tH$ of a quiver~$\tQ$ is a quiver such that $V(\tH)\subseteq V(\tQ)$, $E(\tH)\subseteq E(\tQ)$, and both source and  target functions are given by restriction. If $\tH$ is a subquiver of~$\tQ$, we  write $\tH \leq \tQ$. If~$\tH$ is a subquiver of $\tQ$, and $V(\tH) = V(\tQ)$, then we  say that $\tH$ is  \emph{spanning} in~$\tQ$.
A subquiver of a digraph is automatically a digraph. However, a subquiver of an undirected graph may not be an undirected graph. Subquivers of undirected graphs which are undirected graphs with reflection defined by restriction are called \emph{subgraphs}. 
To be coherent with the classical terminology, we will refer also to subquivers of digraphs as \emph{subgraphs}.

A quiver $\tQ$ has a geometric realization~$|\tQ|$. This is the geometric realization of the CW complex whose $0$-cells are the vertices of the quiver, $1$-cells are the edges of the quiver, and the attaching maps are given by the source and target maps. 
In the case of undirected graphs we will take the quotient by the action of the reflection. 
We define the \emph{genus} of a quiver (resp.~undirected graph) as the first Betti number of its geometric realisation.
A (\emph{directed}) \emph{tree}~$\tT$ is an undirected graph (resp.~quiver) whose genus is $0$ or, equivalently, whose geometric realization $|\tT|$ is contractible.

Given a quiver $\tQ$ and an edge $e\in E(\tQ)$, we can define two quivers~$\tQ\setminus e$ and $\tQ/e$, called respectively the \emph{deletion} and the  \emph{contraction} of $e$.
The former is the subquiver of $\tQ$ obtained by removing $e$ from the set of edges. 
The latter is the quiver whose edges are $E(\tQ)\setminus e$, whose vertices are the quotient of $V(\tQ)$ by the identification $t(e) = s(e)$, and whose source and target maps are defined as follows: $s(e') = [s(e')]$ and $t(e') = [t(e')]$, where the brackets denote the equivalence class.
Less formally,~$\tQ/e$ is obtained from (the geometric realization of) $\tQ$ by contracting $e$ to a point.
Contraction and deletion of undirected graphs are defined similarly; the only change is that we remove both $e$ and $r(e)$, and then we define the reflection as the map induced by the original reflection.

Note that the operation of contracting edges does not change the homotopy type of the geometric realization, unless the edge contracted is a self loop.
We are not going to consider this latter case, and only allow contractions of edges with distinct endpoints. Similarly, when dealing with deletions, we will only allow deletions  whose geometric realization is connected.

A minor of a quiver (resp.~undirected graph) $\tQ'$ is a quiver (resp.~undirected graph)~$\tQ$ that is isomorphic to a quiver (resp.~undirected graph) obtained from~$\tQ'$  by iterative contractions and deletions. More formally, we have the following definition of minor morphisms of quivers.

\begin{defn}
A \emph{minor morphism} $\phi\colon \tG'\to\tG$ of quivers is a map of sets
\[
\phi\colon V(\tG')\sqcup E(\tG')\sqcup \{\star\}\to V(\tG)\sqcup E(\tG)\sqcup \{\star\} \ ,
\]
such that:
\begin{itemize}
\item $\phi(V(\tG'))=V(\tG)$ and $\phi(\star)=\star$;
\item if an edge $e \in E(\tG')$ has endpoints $(s(e),t(e)) = (v, w)$,  
and $\phi(e)\neq \star$,
then either $\phi(e) = \phi(v) = \phi(w)$ is a vertex of $\tG$,
or $\phi(e)$ is an edge of $\tG$ with endpoints $s(\phi(e)) = \phi(v)$ and $ t(\phi(e)) = \phi(w)$;
\item there is a bijection between $\phi^{-1}(E(\tG))$ and $E(\tG)$;
\item for each vertex $v\in \tG$, the preimage $\phi^{-1}(v)$ as a subquiver of $\tG'$ is a directed tree.
    \end{itemize}
If there is a minor morphism $\phi\colon \tG'\to\tG$ we will say that $\tG$ is a \emph{minor} of $\tG'$. The definition of {\em minor morphism of undirected graphs} is the same, but the words ``edge'', ``directed tree'', and ``subquiver'' are replaced by ``undirected edge''`, ``tree'', and ``subgraph'', respectively.
\end{defn}

The preimage of $\star$ under $\phi$ consists of deleted edges, whereas the edges that are mapped to vertices of $\tG$ represent the contracted ones. 
Furthermore, the last item in the definition implies that self loops cannot be contracted, but only deleted.

A  \emph{simple path}, or \emph{directed path}, in a quiver~$\tQ$  is subquiver whose edges can be ordered $e_1,...,e_n$ in such a way that (i)~$s(e_{i+1})=t(e_i)$ for each $i<n$, (ii) no vertex is encountered twice, i.e.~if $s(e_i) = s(e_j)$ or $t(e_i) = t(e_j)$ then $i=j$, and (iii)~$s(e_1)\neq t(e_n)$.
A subquiver of $\tQ$ satisfying properties (i) and (ii), but not (iii), in the definition of simple path is called \emph{oriented cycle}.
We call \emph{alternating} any quiver $\tQ$ such that the sets $t(E(\tQ)), s(E(\tQ))\subset V(\tQ)$ are disjoint.

\begin{example}
The alternating quiver $\tA_n$ in Figure~\ref{fig:alternating} has a simple path $\tI_{m}$, with $m \leq \lfloor \frac{n}{2}\rfloor$ edges, as a minor (but not as a subquiver).  More generally, the simple path $\tI_1$ is a minor of any directed tree. 
\end{example}

We get the categories $\Graph$ and $\Quiver$ of undirected graphs and quivers, respectively, with minor morphisms. The category  $\Digraph$ is the full subcategory of $\Quiver$ spanned by digraphs. Analogously, we denote by $\CGraph$ and $\CQuiver$ the categories of graphs and quivers, respectively, with only contractions as allowed morphisms. The category $\CDigraph$ is also defined, as the full subcategory  of $\CQuiver$ spanned by digraphs.

\begin{rem}\label{rem:adjunctiongraphs}
There is a pair of functors $\iota\colon \Quiver \to \Graph$ and  $\rho\colon \Graph\to \Quiver$.
The latter is just the forgetful functor which forgets the reflection. Intuitively, $\rho$ makes each undirected edge of a graph bidirectional.
The functor~$\iota$ instead replaces each edge with an undirected edge. We  will refer to $\iota(\tQ)$ as the underlying graph of $\tQ$. 
\end{rem}

  \begin{figure}[h]
    \centering
    \begin{tikzpicture}[baseline=(current bounding box.center)]
		\tikzstyle{point}=[circle,thick,draw=black,fill=black,inner sep=0pt,minimum width=2pt,minimum height=2pt]
		\tikzstyle{arc}=[shorten >= 8pt,shorten <= 8pt,->, thick]
		
		\node[above] (v0) at (0,0) {$v_0$};
		\draw[fill] (0,0)  circle (.05);
		\node[above] (v1) at (1.5,0) {$v_1$};
		\draw[fill] (1.5,0)  circle (.05);
		\node[above] (v2) at (3,0) {$v_2$};
		\draw[fill] (3,0)  circle (.05);
		\node[above] (v4) at (4.5,0) {$v_3$};
		\draw[fill] (4.5,0)  circle (.05);
		\node[above] (v5) at (6,0) {$v_4$};
		\draw[fill] (6,0)  circle (.05);
		\node[above] (v6) at (7.5,0) {$v_5$};
		\draw[fill] (7.5,0)  circle (.05);
		 \node[above]  at (9,0) {$v_{n-1}$};
		 \node (v7) at (9,0) {};
		\draw[fill] (9,0)  circle (.05);
		
		\node   at (8.25,0) {$\dots$};
		 \node (v8) at (10.5,0) {};
		 \node[above]  at (10.5,0) {$v_n$};
		\draw[fill] (10.5,0)  circle (.05);
		
		\draw[thick, bunired, -latex] (0.15,0) -- (1.35,0);
		\draw[thick, bunired, -latex] (2.75,0) -- (1.65,0);
		\draw[thick, bunired, latex-] (4.35,0) -- (3.15,0);
		\draw[thick, bunired, latex-] (4.65,0) -- (5.85,0);
		\draw[thick, bunired, latex-] (7.35,0) -- (6.15,0);
	    \draw[thick, bunired, ] (v7) -- (v8);

	\end{tikzpicture}
	\caption{The alternating linear quiver $\tA_n$ on $n+1$ vertices. The edge between $v_{n -1} $ and $v_{n}$ can be oriented either way depending on the parity of $n$. }
    \label{fig:alternating}
\end{figure}
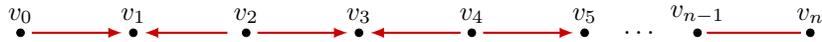

\subsection{Finitely generated $\bC$-modules}

Let $\bC$ be a (essentially) small 
category, and $A$ be a ring. 
A \emph{representation} of~$\bC$, or \emph{${\bf C}$-module}, \emph{over~$A$} is a functor $\mathcal{M}\colon\mathbf{C} \to A\text{\bf -Mod}$ with values in the category of (left) $A$-modules. Denote by ${\Rep}_A(\mathbf{C})$ the category of $\bC$-modules over~$A$ where morphisms are given by natural transformations.

A \emph{submodule} of a $\bC$-module $\mathcal{M}$ is a $\bC$-module $\mathcal{N}$ such that $\mathcal{N}(c)$ is a $A$-submodule of~$\mathcal{M}(c)$, for each object~$c\in\bC$.
If $S$ is a subset of $\bigoplus_{c\in \bC}\cM(c)$, the \emph{span of~$S$}, denoted by~$\mathrm{span}(S)$, is the minimal $\bC$-submodule of~$\cM$ containing~$S$. 

\begin{defn}\label{def:fgmod}
A $\mathbf{C}$-module $\mathcal{M}$ is \emph{finitely generated} if there is a finite set  $S \subseteq \bigoplus_{c\in \bC} \mathcal{M}(c)$, such that $\mathrm{span}(S)=\cM$. 
\end{defn}

We can also characterise finitely generated modules in terms of simpler modules.
For each object $c$ of $\bC$, define a \emph{principal projective $\bC$-module}~$\cP_c$, as follows
\[
\cP_c(c')\coloneqq A\langle\Hom_{\bC}(c,c')\rangle,\quad \text{for each }c'\in\bC \ ,
\]
{i.e.}~the free (left) $A$-module spanned by~$\Hom_{\bC}(c,c')$, and $\cP_c$ is then defined on morphisms by (post)composition. Given a morphism $\gamma\colon c \to c'$, we denote by $e_\gamma$ the corresponding element in $\cP_c(c')$.

\begin{lemma}[{\cite[Proposition~2.3]{FI_Noetherian}}]\label{lem:fingengen}
A $\bC$-module $\cM$ is finitely generated if and only if there exists a surjection
    \[
\bigoplus_{i=1}^n \cP_{c_i} \to \cM
    \]
for some objects $c_1,\dots,c_n$ of $\bC$.
\end{lemma}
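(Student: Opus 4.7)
The plan is to prove both implications by means of the Yoneda-type property enjoyed by principal projectives, together with the observation that each $\cP_c$ is generated as a $\bC$-module by the single element $e_{\id_c}\in\cP_c(c)$.

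First I would establish the generation property of $\cP_c$. Given any object $c'$ and any morphism $\gamma\in\Hom_\bC(c,c')$, by definition of $\cP_c$ on morphisms (postcomposition) one has $e_\gamma=\cP_c(\gamma)(e_{\id_c})$. Hence every basis element of $\cP_c(c')$ lies in the submodule spanned by $\{e_{\id_c}\}\subseteq\cP_c(c)$, and so $\mathrm{span}(\{e_{\id_c}\})=\cP_c$. Next I would record the natural bijection $\Hom_{\Rep_A(\bC)}(\cP_c,\cM)\cong\cM(c)$, sending a natural transformation $\Phi$ to $\Phi_c(e_{\id_c})$; the inverse assigns to $m\in\cM(c)$ the natural transformation whose component at $c'$ is the unique $A$-linear extension of $\gamma\mapsto \cM(\gamma)(m)$. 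This is the key tool for both directions.

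For the reverse implication, assume a surjection $\Phi\colon\bigoplus_{i=1}^n\cP_{c_i}\twoheadrightarrow\cM$ exists. Let $m_i\coloneqq\Phi_{c_i}(e_{\id_{c_i}})\in\cM(c_i)$, and set $S=\{m_1,\dots,m_n\}$, a finite subset of $\bigoplus_{c\in\bC}\cM(c)$. By the previous paragraph, the image of $\Phi$ is contained in the $\bC$-submodule generated by $S$; since $\Phi$ is surjective, $\mathrm{span}(S)=\cM$, and $\cM$ is finitely generated in the sense of Definition~\ref{def:fgmod}.

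Conversely, suppose $\cM$ is finitely generated with generating set $S=\{m_1,\dots,m_n\}$, where $m_i\in\cM(c_i)$. Applying the Yoneda-type bijection, for each $i$ we obtain a natural transformation $\Phi_i\colon \cP_{c_i}\to\cM$ with $(\Phi_i)_{c_i}(e_{\id_{c_i}})=m_i$. Assembling these into $\Phi=\bigoplus_i\Phi_i\colon\bigoplus_{i=1}^n\cP_{c_i}\to\cM$, the image of $\Phi$ is a $\bC$-submodule of $\cM$ containing each $m_i$, and therefore contains $\mathrm{span}(S)=\cM$. Hence $\Phi$ is surjective, completing the proof. The only genuine subtlety is the careful setup of the Yoneda-style identification, but since $\cP_c$ is free on single basis elements at each level, this is routine and does not present a real obstacle.
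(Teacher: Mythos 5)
Your argument is correct, and it is exactly the standard proof: the paper itself gives no argument for this lemma (it is quoted from \cite[Proposition~2.3]{FI_Noetherian}), and the proof there proceeds by the same two observations you use, namely that $\cP_c$ is generated by $e_{\id_c}$ together with the Yoneda-type identification $\Hom_{\Rep_A(\bC)}(\cP_c,\cM)\cong\cM(c)$. The only (harmless) implicit reduction is that a finite generating set $S\subseteq\bigoplus_{c\in\bC}\cM(c)$ may be replaced by the finite set of its homogeneous components without changing $\mathrm{span}(S)$, after which your construction of the surjection applies verbatim.
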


For a finitely generated $\bC$-module $\cM$, we refer to the objects  $c_1,\dots,c_n$ of $\bC$ in the lemma as \emph{generators} of $\cM$.

\begin{defn}
A $\mathbf{C}$-module $\mathcal{M}$ is \emph{Noetherian} if all its submodules are finitely generated. The category~${\Rep}_A(\mathbf{C})$ is {(locally) Noetherian} if all finitely generated $\mathbf{C}$-modules over $A$ are Noetherian. 
\end{defn}

Observe that, in discussing properties related to finite generation, it is often possible to restrict to principal projective modules. Indeed, by \cite[Proposition~3.1.1]{sam2017grobner}, the category~${\Rep}_A(\mathbf{C})$ is Noetherian if and only if every principal projective module $\cP_c$ is Noetherian.

\begin{example}\label{ex:FI}
Let~$\mathbf{FI}$ be the category of finite sets and injective maps.  Then, by~\cite[Theorem A]{FI_Noetherian}, the category~${\Rep}_R(\mathbf{FI})$ is Noetherian, for any (commutative Noetherian) ring $R$. 
\end{example}

Noetherian properties of various other categories have been extensively investigated, in particular thanks to the techniques developed by Sam and Snowden in \cite{sam2017grobner}. One of the main results in the latter paper is that, for a given category~$\bC$ (under some combinatorial assumptions), and a (possibly non-commutative) left Noetherian ring $R$, the associated category of representations is Noetherian as well. Before recalling  the combinatorial conditions to be required on the category~$\bC$, and stating the main result in this section, we need a definition. Let $\cF\colon \bC\to \bD$ be a functor {between essentially small categories}.

\begin{defn}\label{def:propertyF}
 We say that $\cF$ satisfies \emph{property (F)} if for every object $d\in \bD$ there exist finitely many objects $c_1,\dots,c_n$ of $\bC$, and morphisms $\delta_i\colon d\to \cF(c_i)$, such that: for any object~$c$ in~$\bC$,  and morphism $\delta\colon d\to \cF(c)$, there exists a morphism $\gamma_i\colon c_i\to c$ satisfying $\delta=\cF(\gamma_i)\circ \delta_i$.
\end{defn}

Observe that a functor that is surjective on both objects and morphisms satisfies property~(F).
Property (F) allows us to transfer finitely generated properties through functors. In fact, the following  holds: 

\begin{prop}[{\cite[Proposition~3.2.3]{sam2017grobner}}]   If a functor $\cF\colon \bC\to \bD$ satisfies property~(F), and $\cM\colon \bD\to R\text{\bf -Mod}$ is finitely generated, then
$\cF^*\cM\coloneqq \cM\circ\cF\colon \bC\to R\text{\bf -Mod}$ is finitely generated.  
\end{prop}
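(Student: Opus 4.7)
The plan is to combine the characterisation of finite generation via principal projectives (Lemma~\ref{lem:fingengen}) with a direct diagram chase using property~(F).

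First, since $\cM$ is finitely generated, Lemma~\ref{lem:fingengen} gives objects $d_1,\dots,d_n$ of $\bD$ and elements $m_i\in \cM(d_i)$ whose $\bD$-span is all of $\cM$; equivalently, for every object $d\in \bD$ the $R$-module $\cM(d)$ is $R$-spanned by the elements $\cM(\alpha)(m_i)$ as $i$ ranges and $\alpha\colon d_i\to d$ ranges over morphisms of $\bD$.

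Next, apply property~(F) to each $d_i$: it produces a finite family of objects $c_{i,1},\dots,c_{i,k_i}$ of $\bC$ together with morphisms $\delta_{i,j}\colon d_i\to \cF(c_{i,j})$ such that every morphism $d_i\to \cF(c)$ in $\bD$ factors as $\cF(\gamma)\circ \delta_{i,j}$ for some $j$ and some $\gamma\colon c_{i,j}\to c$ in $\bC$. Now set
\[
S\coloneqq \{\,\cM(\delta_{i,j})(m_i)\in \cM(\cF(c_{i,j}))=\cF^*\cM(c_{i,j}) : 1\le i\le n,\ 1\le j\le k_i\,\}\ .
\]
This is a finite subset of $\bigoplus_{c\in\bC} \cF^*\cM(c)$, and I claim it generates $\cF^*\cM$ as a $\bC$-module, which concludes the proof by Definition~\ref{def:fgmod}.

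To verify the claim, fix $c\in\bC$ and $x\in \cF^*\cM(c)=\cM(\cF(c))$. By the finite generation of $\cM$ the element $x$ is an $R$-linear combination of elements of the form $\cM(\alpha)(m_i)$ with $\alpha\colon d_i\to \cF(c)$ a morphism in $\bD$. For any such $\alpha$, property~(F) provides an index $j$ and a morphism $\gamma\colon c_{i,j}\to c$ in $\bC$ with $\alpha=\cF(\gamma)\circ \delta_{i,j}$; functoriality of $\cM$ then yields
\[
\cM(\alpha)(m_i)=\cM(\cF(\gamma))\bigl(\cM(\delta_{i,j})(m_i)\bigr)=\cF^*\cM(\gamma)\bigl(\cM(\delta_{i,j})(m_i)\bigr)\ ,
\]
which lies in the $\bC$-span of $S$. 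Hence $\cF^*\cM(c)\subseteq \mathrm{span}(S)(c)$, as required. There is no real obstacle here — the argument is a straightforward unwinding of the definitions — the only point requiring care is that the factorisation morphism $\gamma$ in property~(F) lives in $\bC$, which is precisely what allows the rewriting above to be interpreted as an action of $\bC$ on $\cF^*\cM$ rather than as an action of $\bD$.
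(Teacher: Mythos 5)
Your proof is correct, and it is essentially the expected argument. Note that the paper itself does not supply a proof here --- it merely cites \cite[Proposition~3.2.3]{sam2017grobner} --- so the comparison is with the standard argument rather than with a proof written out in the paper. Your approach, choosing explicit generators $m_i\in\cM(d_i)$, applying property~(F) to each $d_i$ to produce the finite family $\cM(\delta_{i,j})(m_i)$, and then using the factorisation $\alpha=\cF(\gamma)\circ\delta_{i,j}$ together with functoriality of $\cM$ to show these generate $\cF^*\cM$, is a clean unwinding of the definitions. Sam and Snowden's own proof is the same idea phrased slightly more structurally: they first show that $\cF^*$ of a principal projective $\cP_d$ is a quotient of a finite sum of principal projectives $\cP_{c_{j}}$ via property~(F), then pull back the surjection $\bigoplus\cP_{d_i}\twoheadrightarrow\cM$ along $\cF^*$. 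Your element-wise version and their projective-resolution version are interchangeable; yours has the minor advantage of making the generating set of $\cF^*\cM$ completely explicit, while theirs isolates the reusable fact that $\cF^*\cP_d$ is finitely generated.
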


We need to introduce some further notation and terminology;  see also~\cite[Section~4.1]{sam2017grobner} for a more extensive overview.
Let  ${\bf S}\colon \bC\to \mathbf{Set}$  be a functor with values in the category of sets. We associate a poset $|{\bf S}|$ to ${\bf S}$ as follows. First, take~$\widetilde{{\bf S}}$ to be the union  
$\bigcup_{c\in \bC} {\bf S}(c)$. Then, for an element~$f$ in~${\bf S}(c)$ and an element~$g$ in~${\bf S}(c')$, set $f\leq g$ if and only if there exists a morphism $h\colon c\to c'$ in $\bC$ such that  ${{\bf S}(h)(f)=}h_* (f)= g$. Consider the equivalence relation~$\sim $ defined by~$f\sim g$ if and only if $f\leq g$ and $g\leq f$. Then, the poset~$|{\bf S}|$ is defined as the quotient of the set~$\widetilde{{\bf S}}$ with respect to~$\sim$, equipped with the partial order induced  from~$\leq$.

\begin{defn}\label{def:orderable}
An \emph{ordering} on ${\bf S}\colon \bC\to \mathbf{Set}$ is a choice of a well-order on~${\bf S}(c)$ for each $c$ in $\bC$, such that for every morphism $c\to c'$ the induced map ${\bf S}(c)\to {\bf S}(c')$ is strictly order-preserving; in such a case, we say that ${\bf S}$ is \emph{orderable}. 
\end{defn}

We say that  a poset~$P$ is Noetherian if, for every infinite sequence $x_1, x_2, \dots$ in $P$, there exist indices $i<j$ such that $x_i\leq x_j$ ({cf.}~\cite[Proposition~2.1]{sam2017grobner}). 
We can now recall the fundamental definition of Gr\"obner category.

\begin{defn}\label{def:Grobner}
An essentially small category~$\bC$ is \emph{Gr\"obner} if, for all objects $c$ of $\bC$, the functor ${\bf S}_c\coloneqq \Hom_{\bC}(c, -)$ is orderable, and the associated poset $|{\bf S}_c| $ is Noetherian.
An essentially small category~$\bC$ is \emph{quasi-Gr\"obner} if there exists a Gr\"obner category $\widetilde \bC$ and an essentially surjective functor $\widetilde \bC\to\bC$ satisfying  property~(F).
\end{defn}

Roughly speaking, a category is said to be \emph{Gr\"obner} if its slice categories are restrictive enough, so that representations of the category allow for a theory of ``Gr\"obner bases''. 
The category ${\bf FI}$ from Example~\ref{ex:FI} is not a Gr\"obner category, as its automorphism groups are
symmetric groups, hence non-trivial. However, ${\bf FI}$ is quasi-Gr\"obner:

\begin{rem}\label{rem:OI}
    Let $\mathbf{OI}$ be the category of linearly ordered finite sets and  ordered inclusions. This is a ``rigidified'' version of the category ${\bf FI}$; in particular, it has trivial automorphism groups. It was shown in \cite[Theorem~7.1.2]{sam2017grobner} that $\mathbf{OI}$ is indeed a Gr\"obner category, and that the functor $\mathbf{OI}\to {\bf FI}$ is an essentially surjective functor satisfying property (F). As a consequence, ${\bf FI}$ is quasi-Gr\"obner. 
\end{rem}

The next result follows readily from the definitions:

\begin{prop}\label{prop:qGrobner}
Let $\cF\colon \bC\to\bD$ be a functor satisfying  property (F). If $\bC$ is a quasi-Gr\"obner category, and $\cF$ is essentially surjective, then $\bD$ is a quasi-Gr\"obner category.
\end{prop}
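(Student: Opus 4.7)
The plan is to use the obvious candidate: if $\bC$ is quasi-Gröbner, witnessed by a Gröbner category $\widetilde{\bC}$ together with an essentially surjective functor $G\colon \widetilde{\bC}\to \bC$ satisfying property (F), then I will show that the composite $\cF\circ G\colon \widetilde{\bC}\to \bD$ witnesses that $\bD$ is quasi-Gröbner. So the task reduces to checking two things about $\cF\circ G$: that it is essentially surjective, and that it satisfies property (F). Essential surjectivity is immediate since composites of essentially surjective functors are essentially surjective.

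The substantive step is to show that property (F) is preserved under composition. So fix an object $d\in\bD$. First, I apply property (F) for $\cF$ to get finitely many objects $c_1,\dots,c_n$ in $\bC$ and morphisms $\delta_i\colon d\to\cF(c_i)$ with the universal factorisation property. Then, for each $c_i$, I apply property (F) for $G$ (with $c_i\in\bC$ playing the role of the target-category object) to obtain finitely many objects $\widetilde{c}_{i,1},\dots,\widetilde{c}_{i,m_i}$ in $\widetilde{\bC}$ and morphisms $\eta_{i,j}\colon c_i\to G(\widetilde{c}_{i,j})$ with the corresponding factorisation property. For each pair $(i,j)$, I then set
\[
\widetilde{\delta}_{i,j}\coloneqq \cF(\eta_{i,j})\circ \delta_i\colon d\to \cF(G(\widetilde{c}_{i,j})).
\]
There are finitely many such maps in total, which is the required finite collection.

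To verify the factorisation property, I take any $\widetilde{c}\in\widetilde{\bC}$ and any morphism $\delta\colon d\to\cF(G(\widetilde{c}))$. Setting $c=G(\widetilde{c})$, property (F) for $\cF$ produces an index $i$ and a morphism $\gamma\colon c_i\to G(\widetilde{c})$ in $\bC$ with $\delta=\cF(\gamma)\circ\delta_i$. Now applying property (F) for $G$ to $\gamma$ yields an index $j$ and a morphism $\widetilde{\gamma}\colon \widetilde{c}_{i,j}\to\widetilde{c}$ in $\widetilde{\bC}$ with $\gamma=G(\widetilde{\gamma})\circ\eta_{i,j}$. Substituting and using functoriality of $\cF$ gives
\[
\delta=\cF(G(\widetilde{\gamma}))\circ\cF(\eta_{i,j})\circ\delta_i=(\cF\circ G)(\widetilde{\gamma})\circ\widetilde{\delta}_{i,j},
\]
which is precisely the factorisation demanded by property (F) for $\cF\circ G$.

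There is no real obstacle here: the statement is essentially a bookkeeping lemma saying that quasi-Gröbnerness is stable under pushing forward along property-(F) functors. The only thing to be careful about is to apply property (F) twice in the correct order (first for the outer functor $\cF$, then for the inner $G$), so that the maps $\eta_{i,j}$ can be precomposed with $\delta_i$ and then translated into maps landing in $\cF(G(-))$.
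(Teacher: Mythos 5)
Your proposal is correct and follows the same route as the paper: both exhibit the composite $\cF\circ G\colon\widetilde{\bC}\to\bD$ as the witness, using that essential surjectivity and property (F) are closed under composition. The only difference is that the paper simply cites Sam--Snowden (Proposition~3.2.6) for the closure of property (F) under composition, whereas you prove that lemma by hand -- and your verification of it is accurate.
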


\begin{proof}
Composition of essentially surjective functors is essentially surjective. Moreover, the composition of functors satisfying property (F) satisfies property (F) by \cite[Proposition~3.2.6]{sam2017grobner}. 
\end{proof}

Assume now that $R$ is a left Noetherian ring. The following is one of the main results connecting combinatorial properties of a category with the Noetherianity of the category of representations.

\begin{thm}[{\cite[Theorem~1.1.3]{sam2017grobner}}]\label{rem:qGrobner is Noeth}
If $\bC$ is a quasi-Gr\"obner category, then the category~${\Rep}_A(\mathbf{C})$ is Noetherian.
\end{thm}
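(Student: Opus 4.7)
The plan is to prove the theorem first when $\bC$ is Gr\"obner, and then reduce the quasi-Gr\"obner case to it via property~(F). By \cite[Proposition~3.1.1]{sam2017grobner} it suffices to show that every principal projective $\cP_c$ is Noetherian, so throughout I fix a $\bC$-submodule $\cN \subseteq \cP_c$ and aim to produce a finite generating set.

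When $\bC$ is Gr\"obner, I would imitate classical Gr\"obner-basis theory, with ``monomials'' replaced by morphisms out of $c$. Orderability of ${\bf S}_c$ supplies a well-order on each basis $\Hom_\bC(c,c')$ of $\cP_c(c')$, so every nonzero $x \in \cN(c')$ has a well-defined \emph{leading morphism} $\mathrm{lm}(x)$ and a leading coefficient in $A$. Set
\[
\mathrm{in}(\cN) \;\coloneqq\; \{\mathrm{lm}(x) : 0 \neq x \in \cN(c'),\ c' \in \bC\} \;\subseteq\; \widetilde{{\bf S}}_c.
\]
The submodule condition, combined with the fact that morphisms act strictly order-preservingly on ${\bf S}_c$, yields $h_*\mathrm{lm}(x) = \mathrm{lm}(h_*x)$, so the image of $\mathrm{in}(\cN)$ in $|{\bf S}_c|$ is upward-closed. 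Noetherianity of $|{\bf S}_c|$ then implies this image has finitely many minimal elements; lifting to representatives $x_1,\dots,x_n \in \cN$ and performing the standard leading-term reduction, one shows the $x_i$ generate $\cN$.

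For the quasi-Gr\"obner case, suppose $\cF \colon \widetilde{\bC} \to \bC$ is essentially surjective and satisfies property~(F), with $\widetilde{\bC}$ Gr\"obner. Let $\cM$ be a finitely generated $\bC$-module and $\cN \subseteq \cM$ a submodule. By the proposition stated before Definition~\ref{def:orderable}, $\cF^*\cM$ is a finitely generated $\widetilde{\bC}$-module, hence Noetherian by the Gr\"obner case; in particular $\cF^*\cN$ is finitely generated over $\widetilde{\bC}$. A finite generating set of $\cF^*\cN$ can then be pushed forward to a finite generating set of $\cN$ over $\bC$: any $y \in \cN(d)$ can, by essential surjectivity, be pulled back to some $\cF(\tilde c)$, where it is expressible through the pullback generators via morphisms of $\widetilde{\bC}$; property~(F) then ensures that the images in $\bC$ of these morphisms lie in a prescribed finite set, so only finitely many generators are needed globally.

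The main obstacle is the leading-term reduction in the Gr\"obner step: one must show that after subtracting an appropriate $A$-linear combination of images $(h_i)_* x_i$ from $x$, the leading morphism of the remainder strictly decreases, and that the resulting process terminates. Termination relies precisely on the Noetherianity of $|{\bf S}_c|$, since successive leading morphisms live in different hom-sets and only the global poset structure can be exploited. At each fixed minimal leading morphism $\gamma_i$, the possible leading coefficients form a left ideal of $A$, so left Noetherianity of $A$ is needed to select finitely many $x_i$ whose leading coefficients generate that ideal — this is where the hypothesis on $A$ genuinely enters. The rest is bookkeeping.
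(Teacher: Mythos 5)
The paper itself gives no argument for this statement; it is quoted from Sam--Snowden \cite[Theorem~1.1.3]{sam2017grobner}, so your attempt has to be measured against their proof, whose architecture (a leading-term calculus for principal projectives in the Gr\"obner case, then pullback along an essentially surjective functor with property~(F)) you reproduce in outline, and your quasi-Gr\"obner paragraph is essentially correct. The genuine gap is in the Gr\"obner step, precisely where $A$ is not a field. You select lifts $x_1,\dots,x_n$ only at the finitely many minimal elements of the image of $\mathrm{in}(\mathcal{N})$ in $|\mathbf{S}_c|$, adjusting coefficients there by left Noetherianity of $A$. This cannot suffice: the left ideal $I_f$ of leading coefficients of elements with leading morphism $f$ can grow strictly as $f$ increases in the poset, and pushing forward generators chosen at a minimal $f$ only reproduces $I_f$ at larger positions. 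Concretely, let $\bC$ be the poset category $0\to 1\to 2\to\cdots$ (Gr\"obner, with $|\mathbf{S}_0|\cong\bN$), $A=\bZ$, $c=0$, and $\mathcal{N}\subseteq\cP_0$ the submodule with $\mathcal{N}(0)=2\bZ$ and $\mathcal{N}(j)=\bZ$ for $j\geq 1$: the unique minimal leading morphism sits at $0$, your recipe outputs the single generator $2\in\mathcal{N}(0)$, and its span misses $1\in\mathcal{N}(1)$. Relatedly, you misattribute the role of the hypotheses: termination of the reduction does not use Noetherianity of $|\mathbf{S}_c|$ at all, since after subtracting $\sum_j b_j (h_j)_*x_{i_j}$ the new leading morphism lies in the \emph{same} well-ordered set $\Hom_\bC(c,c')$ as the old one, so termination follows from that well-order alone.

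What Noetherianity of $|\mathbf{S}_c|$ really buys, and only in combination with the ring hypothesis used \emph{simultaneously} rather than consecutively, is finiteness of a monomial generating set for the whole initial module, with generators allowed at non-minimal positions. For instance, one proves the ascending chain condition for monomial submodules of $\cP_c$: given a strictly ascending chain, pick $a_ne_{f_n}$ in the $(n+1)$-st member but not the $n$-th, use the well-quasi-order to extract $f_{n_1}\leq f_{n_2}\leq\cdots$, and apply left Noetherianity of $A$ to the chain of left ideals generated by $a_{n_1},\dots,a_{n_k}$ to obtain a contradiction. With that lemma, $\mathrm{in}(\mathcal{N})$ is generated by finitely many of the actual leading terms $a_ie_{f_i}$ (at arbitrary $f_i$); lifting these and running your reduction then does generate $\mathcal{N}$, and your quasi-Gr\"obner reduction (property~(F) to keep $\cF^*\cM$ finitely generated, essential surjectivity to transport generators of $\cF^*\mathcal{N}$ back to $\mathcal{N}$) completes the proof as you wrote it.
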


In particular, if $R$ is a left Noetherian ring and $\bC$ is  quasi-Gr\"obner, all submodules and quotients of finitely generated functors $\mathcal{M}\colon\mathbf{C} \to R\text{\bf -Mod}$ are also finitely generated.

\section{Gr\"obner properties of $\Quiver$ and $\Digraph$}\label{sec:Grobner_digraph}

The primary goal of this section is to prove that certain subcategories of $\Quiver^{\op}$ and $\Digraph^{\op}$
are Noetherian.

\begin{defn}
    Let $g$ be a non-negative integer. The category $\Quiver_g$ (resp. $\Quiver_{\leq g}$) is the full subcategory of $\Quiver$ whose objects are quivers of genus $g$ (resp.~at most $g$). The categories $\Digraph_g$ and $\Digraph_{\leq g}$ are defined in the same fashion.
\end{defn}

Observe that in both $\Quiver_g$ and $\Digraph_g$, the only minor morphisms that appear are automorphisms and contractions, as deletions never preserve genus. On the other hand, among the morphisms in $\Quiver_{\leq g}$ and $\Digraph_{\leq g}$ there are both deletions and contractions.

For technical reasons, we need to define ``rigidified'' versions of the above categories. These will play a prominent role in the proof of this section's main theorem.
First, recall that a \emph{rooted spanning tree} for a quiver $\tG$ is a pair $(\tT,v_r)$, where $\tT$ is a spanning subquiver of $\tG$ which is a directed tree, and $v_r$ is a fixed vertex of $\tT$ called {\em root}. 
Note that, just as any connected subquiver, $\tT$ is obtained from $\tG$ by subsequent deletions.
We set $P\Quiver_g$ to be the category whose objects are \emph{undirected} graphs of genus $g$, enhanced with the following extra information:
\begin{enumerate}
    \item a choice of rooted spanning tree;
    \item a choice of planar embedding for the aforementioned spanning tree;
    \item a choice of orientation for the edges outside of this spanning tree;
    \item at each non-root vertex, a label indicating whether that vertex is the source or target of the (unique) edge of the spanning tree leading from the vertex to the root.
\end{enumerate}

The morphisms of this category will be edge contractions (and automorphisms) of the underlying graphs, which preserve all of the above structure. We similarly define $P\Digraph_{g}$.

\begin{rem}
    The reader might find it strange that we defined the category $P\Quiver_g$ by starting with undirected graphs, and then adding extra information that effectively orients them. The reason for this relates with our ultimate proof that this category is Gr\"obner. The idea is that we start with planar trees, and then add the data of the edge orientations and the extra edges in such a way that they can be encoded into the vertices of the tree. This way, we will be able to use a version of \textit{Kruskal's Tree theorem}, which allows for the vertices of the tree to be labelled.
\end{rem}

\begin{rem}
    We observe that our category $P\Quiver_g$ is a minor modification of the category $\mathcal{P}\mathcal{G}_g$ in \cite{proudfoot2022contraction}. Indeed, the main difference between the two categories lies in the labels on the vertices of the spanning tree. The important point is that this extra data is \emph{finite} in nature; this means that our vertices are being given labels from a finite set (source and target). This will allow us to apply \cite[Corollary 3.7]{proudfoot2022contraction}.\\
\end{rem}

To prove that representations of $\Quiver_{\leq g}^{\op}$ and $\Digraph_{\leq g}^{\op}$ have our desired Noetherian property, we will use the Gr\"obner methods of Sam and Snowden \cite{sam2017grobner}. To summarize, for each fixed $g$, we will proceed as follows:

\begin{itemize}
    \item We show that the category $P\Quiver_g^{\op}$ is Gr\"obner;
    \item we show that the forgetful functor $P\Quiver_g^{\op} \rightarrow \Quiver_g^{\op}$ has Property (F) and is essentially surjective;
    \item finally, we argue that the natural inclusion $\bigsqcup_{q \leq g}\Quiver_q^{\op} \hookrightarrow \Quiver_{\leq g}^{\op}$ has Property (F) and is essentially surjective.
\end{itemize}

Once we have completed each of the above steps, \cite[Theorem 1.1.3]{sam2017grobner} (cf.~Theorem~\ref{rem:qGrobner is Noeth}) will immediately imply our desired Noetherian Property.

Note that in our outline we have not said anything about the category $\Digraph_{\leq g}$! To justify why, we have the following standard proposition.

\begin{prop}
    The category $P\Digraph_{\leq g}^{\op}$ is Gr\"obner provided that $P\Quiver_{\leq g}^{\op}$ is.
\end{prop}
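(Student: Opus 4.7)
The approach is to exhibit $P\Digraph_{\leq g}^{\op}$ as a full subcategory of $P\Quiver_{\leq g}^{\op}$ and then transfer the Gröbner property along this full inclusion.

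First, I would verify that $P\Digraph_{\leq g}$ is a full subcategory of $P\Quiver_{\leq g}$. The objects of $P\Digraph_{\leq g}$ are precisely those objects of $P\Quiver_{\leq g}$ whose underlying quiver happens to be a digraph (each edge determined by the ordered pair of its endpoints). The morphisms of $P\Quiver_{\leq g}$ are structure-preserving contractions, deletions, and automorphisms; since the digraph condition is preserved under contraction and deletion of edges, any $P\Quiver_{\leq g}$-morphism between two digraph objects is automatically a $P\Digraph_{\leq g}$-morphism. Dualising preserves full inclusions, so $P\Digraph_{\leq g}^{\op}$ sits as a full subcategory of $P\Quiver_{\leq g}^{\op}$.

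Next, fix an object $c$ of $P\Digraph_{\leq g}^{\op}$ and set $\mathbf{S}_c^{P\tD} = \Hom_{P\Digraph_{\leq g}^{\op}}(c,-)$ and $\mathbf{S}_c^{P\tQ} = \Hom_{P\Quiver_{\leq g}^{\op}}(c,-)$. Fullness gives $\mathbf{S}_c^{P\tD}(c') = \mathbf{S}_c^{P\tQ}(c')$ for every $c'$ in $P\Digraph_{\leq g}^{\op}$. Equipping each such Hom set with the well-order inherited from the assumed ordering on $\mathbf{S}_c^{P\tQ}$ gives a well-order, and the strictly order-preserving property with respect to morphisms in $P\Digraph_{\leq g}^{\op}$ is inherited since these morphisms are also morphisms in $P\Quiver_{\leq g}^{\op}$. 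Hence $\mathbf{S}_c^{P\tD}$ is orderable in the sense of Definition~\ref{def:orderable}.

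It remains to show that $|\mathbf{S}_c^{P\tD}|$ is a Noetherian poset. Again by fullness, for any two morphisms $f\colon c\to c'$ and $g\colon c\to c''$ in $P\Digraph_{\leq g}^{\op}$, the existence of a connecting morphism $h\colon c'\to c''$ can be checked in either category with the same answer, because any such $h$ lies automatically in the full subcategory. Consequently, $|\mathbf{S}_c^{P\tD}|$ embeds as a sub-poset of $|\mathbf{S}_c^{P\tQ}|$, which is Noetherian by hypothesis. Any sub-poset of a Noetherian poset is Noetherian (an infinite sequence in the sub-poset, viewed in the ambient poset, still yields indices $i<j$ with $x_i\leq x_j$), so $|\mathbf{S}_c^{P\tD}|$ is Noetherian, completing the argument.

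The proof is essentially formal; the only substantive point is checking the full-subcategory claim, which amounts to confirming that contraction and deletion of edges preserve the digraph condition. No representation-theoretic machinery is needed beyond unwinding the definitions.
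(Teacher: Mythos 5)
Your proof is correct and takes essentially the same route as the paper: the paper also observes that $P\Digraph_{\leq g}$ is by construction a full subcategory of $P\Quiver_{\leq g}$ and then invokes \cite[Proposition~4.4.2]{sam2017grobner} for the fact that the Gr\"obner property passes to full subcategories, which you instead verify by hand (the well-orders restrict, and by fullness $|{\bf S}_c|$ for the subcategory is a subposet of the ambient Noetherian poset). One small caveat: your justification of fullness via ``the digraph condition is preserved under contraction and deletion'' is not accurate as stated --- contracting an edge of a digraph can create parallel edges with the same orientation --- but fullness does not need this, since it only concerns morphisms whose source \emph{and} target are already digraph objects, and for such pairs the Hom sets in the two categories coincide by definition.
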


\begin{proof}
By construction the category $P\Digraph_{\leq g}$ is a full subcategory of $P\Quiver_{\leq g}$. The Gr\"obner property is inherited by full subcategories per \cite[Proposition 4.4.2]{sam2017grobner}.
\end{proof}

The above proposition justifies why the first step in our outline need not be repeated for the two categories. We will see that the latter two steps can be proven identically in the digraph case, and therefore it is unnecessary to repeat them. In particular, the above represents the crux of why we will only be working with the quiver categories from this point forward in this section.

\begin{prop}\label{prop:GrobnerQuiver}
    For any $g \geq 0$ the category $P\Quiver_g^{\op}$ is Gr\"obner.
\end{prop}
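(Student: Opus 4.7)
The plan is to follow the strategy used in \cite{proudfoot2022contraction} to show that $\mathcal{P}\mathcal{G}_g^{\op}$ is Gr\"obner, adapting it to handle the additional finite labels that distinguish $P\Quiver_g$ from $\mathcal{P}\mathcal{G}_g$. The key tool is \cite[Corollary~3.7]{proudfoot2022contraction}, a labeled version of Kruskal's tree theorem which states that planar rooted trees with vertex labels drawn from a finite alphabet form a Noetherian poset under the labeled planar tree-minor relation, with a natural well-ordering induced by the planar embedding.

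First, I would fix a non-negative integer $g$ and an object $c$ of $P\Quiver_g$. Since every object of $P\Quiver_g$ has genus exactly $g$, no morphism in $P\Quiver_g$ can involve edge deletions, so every morphism $f\colon d\to c$ is, up to automorphism of $c$, a sequence of contractions of tree edges that preserves the rooted planar spanning tree, the orientations of the non-tree edges, and the source/target labels. Consequently, the preimage of each vertex of $c$ under $f$ is a subtree of the spanning tree of $d$, and the non-tree edges of $d$ are mapped bijectively onto the non-tree edges of $c$.

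Second, I would encode each such $f\colon d\to c$ by the planar rooted spanning tree of $d$, with each vertex $w$ decorated by a label from a finite alphabet recording: (i)~its source/target marker in $d$; (ii)~the vertex $f(w)\in V(c)$ to which $w$ is sent; and (iii)~the incidences $w$ carries with the (finitely many) non-tree edges of $c$, together with their orientations. Because $V(c)$ is finite and $c$ has exactly $g$ non-tree edges, this alphabet is finite. With the depth-first order on the planar spanning tree extended lexicographically by these labels, this construction yields a strictly order-preserving map from $\Hom_{P\Quiver_g^{\op}}(c,-)$ to the functor assigning to each object the set of labeled planar rooted trees over this alphabet, compatible with contraction on both sides.

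Finally, applying \cite[Corollary~3.7]{proudfoot2022contraction} to this alphabet shows that the target poset is Noetherian, and hence so is $|\mathbf{S}_c|$; together with the well-ordering above, this establishes the Gr\"obner property. The main obstacle I expect lies in step two: the alphabet must be chosen rich enough to guarantee that whenever two labeled trees are comparable under the planar tree-minor relation, the corresponding objects of $P\Quiver_g$ are actually connected by a morphism preserving all the extra structure (orientations, source/target labels, non-tree edge incidences). This is precisely the role of items (i)--(iii), and it is here that fixing the genus to be exactly $g$ (rather than only bounded by $g$) becomes essential, as it ensures that the number of non-tree edges involved in the labeling is a fixed finite quantity.
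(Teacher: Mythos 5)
Your argument is correct in outline, but it takes a more ground-up route than the paper. The paper's proof is essentially a two-line reduction: take the finite label set $S=\{s,t\}$, observe that $P\Quiver_g$ is a full subcategory of the labelled planar category $\mathcal{P}\mathcal{G}_{g,S}$ of \cite{proudfoot2022contraction}, and conclude by citing \cite[Theorem~3.10]{proudfoot2022contraction} (which says $\mathcal{P}\mathcal{G}_{g,S}^{\op}$ is Gr\"obner for any finite, or even well-quasi-ordered, label set) together with the fact that the Gr\"obner property is inherited by full subcategories, \cite[Proposition~4.4.2]{sam2017grobner}. What you do instead is reconstruct the proof of that cited theorem in this special case: you encode each morphism of the slice $\Hom_{P\Quiver_g^{\op}}(c,-)$ as a planar rooted tree labelled by a finite alphabet (source/target marker, image vertex in $c$, incidence and orientation data for the $g$ non-tree edges) and then invoke the labelled planar Kruskal statement \cite[Corollary~3.7]{proudfoot2022contraction}. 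This buys self-containedness and makes explicit exactly how the orientation data is absorbed into a finite alphabet, which is the conceptual point behind the paper's definition of $P\Quiver_g$; the cost is that the step you yourself flag as the main obstacle --- that comparability of two labelled trees under the labelled planar minor order really produces a structure-preserving contraction $d_j\to d_i$ commuting over $c$ (matching spanning trees, root, planar order, non-tree edge endpoints and orientations) --- is asserted rather than carried out, and this verification is precisely the content that the citation of \cite[Theorem~3.10]{proudfoot2022contraction} would let you black-box. If you intend to keep your direct argument, you should either complete that verification or, more economically, replace the encoding step by the full-subcategory observation and the citation, as the paper does.
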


\begin{proof}
In \cite[Theorem 3.10]{proudfoot2022contraction} it is proven that the category $\mathcal{P}\mathcal{G}_{g,S}^{\op}$ is Gr\"obner where $S$ is any finite (or even well-quasi-ordered) set of permissible vertex labels. Consider the case wherein $S = \{s,t\}$. $P\Quiver_g$ is a full subcategory of $\mathcal{P}\mathcal{G}_{g,S}^{\op}$ in this case. It follows that $\mathcal{P}\mathcal{G}_{g,S}^{\op}$ is Gr\"obner by the same argument given in the prior proposition.
\end{proof}

Following our outline, our next goal is to now show that the forgetful functor $P\Quiver_g \rightarrow \Quiver_g$ has Property~(F) and is essentially surjective. It is clear that every quiver in $\Quiver_g$ is in the image of the forgetful functor. Property (F) follows immediately from \cite[Lemma 3.11]{proudfoot2022contraction}. To conclude our proof it therefore remains to show that the embedding $\bigsqcup_{q \leq g}\Quiver_q^{\op} \hookrightarrow \Quiver_{\leq g}^{\op}$ is essentially surjective with property (F).

\begin{lemma}\label{lem:leqg}
Fix $g \geq 0$. The embedding  $\bigsqcup_{q \leq g}\Quiver_q^{\op} \hookrightarrow \Quiver_{\leq g}^{\op}$ is essentially surjective with property (F).
\end{lemma}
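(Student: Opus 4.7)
The essential surjectivity is immediate: any quiver of genus at most $g$ has a well-defined genus $q\in\{0,\dots,g\}$, so it already sits inside $\Quiver_q \subseteq \bigsqcup_{q\leq g}\Quiver_q$.

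For property (F), my plan is to factor every minor morphism as a contraction followed by a deletion, and then observe that only finitely many intermediates can arise. Fix $\tG$ of genus $q_0$ and let $\delta\colon \tG\to\tH$ be a morphism in $\Quiver_{\leq g}^{\op}$, i.e.\ a minor morphism $\phi\colon\tH\to\tG$. I would collapse each branch-tree $\phi^{-1}(v)$, for $v\in V(\tG)$, to a single vertex; by the definition of a minor morphism each such preimage is a directed tree, so this is a legal sequence of edge contractions. The result is an intermediate quiver $\tK$ together with a canonical factorisation
\[
\tH \xrightarrow{\ c\ } \tK \xrightarrow{\ d\ } \tG,
\]
with $c$ pure contraction and $d$ pure deletion. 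Since contractions preserve the first Betti number, $\tK$ has the same genus $q$ as $\tH$, so $c$ is a morphism in $\Quiver_q$. A short Euler-characteristic count, using that each branch satisfies $|V(\phi^{-1}(v))|-|E(\phi^{-1}(v))|=1$, yields $V(\tK)=V(\tG)$ and
\[
|E(\tK)|=|E(\tG)|+|\phi^{-1}(\star)|=|E(\tG)|+(q-q_0)\leq |E(\tG)|+g.
\]

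With this bound, only finitely many pairs $(\tK,d)$ can arise up to isomorphism, since they are quivers on $V(\tG)$ with at most $|E(\tG)|+g$ edges equipped with a pure-deletion map onto $\tG$. I would choose a finite set of representatives $\{(\tK_i,\delta_i)\}_{i=1}^n$, where $\delta_i\colon \tG\to\tK_i$ is the morphism in $\Quiver_{\leq g}^{\op}$ dual to $d_i\colon \tK_i\to\tG$; each $\tK_i$ has some genus $q_i\leq g$ and so lies in $\Quiver_{q_i}^{\op}\subseteq \bigsqcup_{q\leq g}\Quiver_q^{\op}$. Applied to an arbitrary $\delta$, the canonical factorisation identifies $\tK$ with some $\tK_i$ up to isomorphism, and the contraction $c$ (composed with this isomorphism) supplies the required morphism $\gamma_i\colon \tK_i\to\tH$ in $\Quiver_{q_i}^{\op}$ satisfying $\delta=\gamma_i\circ\delta_i$. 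The only delicate step is verifying the ``contract then delete'' factorisation, which is a routine unwinding of the definition of a minor morphism; I do not foresee any substantial obstacle, and the same argument works verbatim for $\Digraph$ in place of $\Quiver$.
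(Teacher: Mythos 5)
Your proof is correct and follows essentially the same route as the paper's: the paper also takes as its finite family all pure deletions terminating at the given quiver (finite because the genus is bounded by $g$) and reduces property (F) to the fact that every minor morphism factors as a contraction followed by a deletion. Your branch-tree collapse and edge count merely make explicit the factorisation and finiteness claims that the paper states without detail.
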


\begin{proof}
Let $\tQ$ be a quiver of genus at most $g$, and let $\{\phi_i\colon \tQ_i \rightarrow \tQ\}$ be an exhaustive list of all possible \emph{deletions} that terminate at $\tQ$. Note that by a deletion we mean a minor morphism for which no edge is contracted and at least one edge is deleted. Further note that because our genus $g$ is fixed, and because deletions always strictly decrease genus when applied, this collection is finite.

To prove that our embedding has (F), we must argue that for any quiver $\tQ'$ of genus at most $g$ and any minor morphism $\psi\colon\tQ' \rightarrow \tQ$ we can find an $i$ and an edge contraction $\eta\colon\tQ' \rightarrow \tQ_i$, such that $\psi = \phi_i \circ \eta$. Indeed, this just amounts to the statement that any minor morphism can be written as a composition of a deletion followed by an edge contraction.
\end{proof}

Let going forward, we will write $\CQuiver_{\leq g}$ to denote the category $\bigsqcup_{q \leq g}\Quiver_q^{\op}$, of quivers of genus at most~$g$ and contractions. To summarize the work of this section, we have proven the following.

\begin{thm}\label{thm:contractcatfg}
    The categories  $\Quiver_{\leq g}^\op$, $\CQuiver_{\leq g}^\op$, and $\Digraph_{\leq g}^{\op}$ are quasi-Gr\"obner. 
\end{thm}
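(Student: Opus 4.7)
The plan is to assemble the theorem from the three ingredients already established in this section, through repeated application of Proposition~\ref{prop:qGrobner}. I will focus on $\Quiver_{\leq g}^{\op}$ first; the cases of $\CQuiver_{\leq g}^{\op}$ and $\Digraph_{\leq g}^{\op}$ will then follow by minor variations.

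First I would observe that the disjoint union $\bigsqcup_{q \leq g} P\Quiver_q^{\op}$ is Gr\"obner. Each summand is Gr\"obner by Proposition~\ref{prop:GrobnerQuiver}, and the Gr\"obner property is only sensitive to the slice categories $\Hom_{\bC}(c,-)$, which are unaffected when one takes a disjoint union of categories with no Hom-sets across components. Next, the forgetful functor $P\Quiver_q^{\op} \to \Quiver_q^{\op}$ is essentially surjective (every genus-$q$ quiver admits a rooted planar spanning tree with appropriate source/target labels on the non-root vertices and orientations on the non-tree edges) and has property (F) by \cite[Lemma 3.11]{proudfoot2022contraction}. Taking the disjoint union over $q \leq g$ yields an essentially surjective functor with property (F)
\[
\bigsqcup_{q \leq g} P\Quiver_q^{\op} \longrightarrow \bigsqcup_{q \leq g} \Quiver_q^{\op} = \CQuiver_{\leq g}^{\op}.
\]
By Proposition~\ref{prop:qGrobner}, this already proves that $\CQuiver_{\leq g}^{\op}$ is quasi-Gr\"obner.

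Then Lemma~\ref{lem:leqg} provides the final bridge: the natural inclusion $\CQuiver_{\leq g}^{\op} \hookrightarrow \Quiver_{\leq g}^{\op}$ is essentially surjective and has property (F), because every minor morphism factors as a deletion followed by a contraction and the list of possible deletions terminating at a given $\tQ$ of bounded genus is finite. Composing this with the previous step and invoking Proposition~\ref{prop:qGrobner} once more, $\Quiver_{\leq g}^{\op}$ is quasi-Gr\"obner.

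For the digraph case I would repeat the argument verbatim, using the full subcategory $P\Digraph_g^{\op} \subseteq P\Quiver_g^{\op}$, which is Gr\"obner by \cite[Proposition 4.4.2]{sam2017grobner}. The forgetful functor $P\Digraph_q^{\op} \to \Digraph_q^{\op}$ remains essentially surjective with property (F), and the deletion/contraction factorization in Lemma~\ref{lem:leqg} is stable on digraphs (deletions and contractions of digraphs are digraphs). The main obstacle has effectively been absorbed into Proposition~\ref{prop:GrobnerQuiver}, which itself leans on \cite[Theorem 3.10]{proudfoot2022contraction}; the key insight making that reduction work is that the source/target decoration at each non-root vertex takes values in a finite (two-element) label set, so Kruskal-type arguments go through unchanged. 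Everything after that is a formal assembly via Proposition~\ref{prop:qGrobner}.
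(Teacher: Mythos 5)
Your proposal follows the same route as the paper: establish that $P\Quiver_g^{\op}$ is Gr\"obner via the Proudfoot--Ramos labelled-tree result with the finite label set $\{s,t\}$, pass along the forgetful functor (essentially surjective, property (F) by \cite[Lemma 3.11]{proudfoot2022contraction}), then use the deletion/contraction factorization of Lemma~\ref{lem:leqg} to reach $\Quiver_{\leq g}^{\op}$, with the digraph case handled as a full subcategory. The argument is correct and matches the paper's proof, including the observation that $\CQuiver_{\leq g}^{\op}$ is quasi-Gr\"obner already at the intermediate stage.
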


    Consider the \emph{edge module} 
    \[
\mathcal{E}_{g}\colon (\Quiver_{\leq g})^\op\to \Mod_R 
    \]
    which associates to a quiver $\tQ$ the free $R$-module $\mathcal{E}_{g}(\tQ)$ generated by the set~$E(\tQ)$  of edges of $\tQ$. For any minor morphism, there is a well-defined inclusion of edges in the opposite direction of the contraction. This in turn induces maps between the aforementioned free modules.

\begin{prop}\label{prop:edgemodfg}
Let $d$ be a natural number.    Then, the module $\mathcal{E}_{g}^{\otimes d}$  is a finitely generated $\Quiver_{\leq g}^\op$-module.
\end{prop}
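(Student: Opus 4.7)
The plan is to construct an explicit finite generating family for $\mathcal{E}_g^{\otimes d}$. By Lemma~\ref{lem:fingengen}, it suffices to produce finitely many pairs $(\tQ_0, x_0)$ with $\tQ_0\in\Quiver_{\leq g}$ and $x_0\in\mathcal{E}_g^{\otimes d}(\tQ_0)$ such that every pure tensor $f_1\otimes\cdots\otimes f_d\in\mathcal{E}_g^{\otimes d}(\tQ)$, for any $\tQ$ and any $d$-tuple of edges $f_j\in E(\tQ)$, equals the image of some $x_0$ under the map $\mathcal{E}_g^{\otimes d}(\tQ_0)\to \mathcal{E}_g^{\otimes d}(\tQ)$ induced by a minor morphism $\phi\colon \tQ\to\tQ_0$. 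Since these pure tensors $R$-span $\mathcal{E}_g^{\otimes d}(\tQ)$, this reduction is enough.

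Given such data $(\tQ, f_1, \ldots, f_d)$, I would build $\phi$ by iteratively ``localizing at the marked edges'': process the unmarked edges of $\tQ$ one at a time, contracting each if its current endpoints are distinct and deleting it if it has become a self-loop. Both operations are admissible in $\Quiver_{\leq g}$: contractions of non-loop edges preserve both genus and connectedness, and self-loops, never being bridges, admit deletion which only lowers the genus. After finitely many steps only the $d$ marked edges remain, and the composite defines a minor morphism $\phi\colon\tQ\to\tQ_0$ whose edge bijection identifies $f_j\leftrightarrow\bar e_j$, where $\bar e_j\in E(\tQ_0)$ is the image of $f_j$. By functoriality, the induced map $\mathcal{E}_g^{\otimes d}(\tQ_0)\to\mathcal{E}_g^{\otimes d}(\tQ)$ sends $\bar e_1\otimes\cdots\otimes\bar e_d$ to $f_1\otimes\cdots\otimes f_d$.

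Since $|E(\tQ_0)|=d$ and $\mathrm{genus}(\tQ_0)\leq g$, the connected case yields $|V(\tQ_0)| = d + 1 - \mathrm{genus}(\tQ_0) \leq d+1$, bounding the vertex count in terms of $d$ and $g$ alone. It follows that only finitely many isomorphism types of $\tQ_0$ can arise, and equipping each with an ordering of its $d$ edges produces a finite collection $\{(\tQ_0^{(i)}, x_i)\}$. This gives the desired surjection $\bigoplus_i \cP_{\tQ_0^{(i)}}\twoheadrightarrow\mathcal{E}_g^{\otimes d}$.

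The main obstacle I anticipate is the careful formalization of the iterative construction as a single well-defined minor morphism in $\Quiver_{\leq g}$: one must verify that each elementary step remains admissible under the paper's conventions, and track the behaviour of the marked edges $f_j$, which may evolve into self-loops during the process but are never themselves contracted or deleted. A subsidiary subtlety concerns handling disconnected quivers or isolated vertices, resolved either by applying the construction componentwise or by invoking the implicit convention that deletions preserve connectedness, which in turn keeps the finiteness count unaffected.
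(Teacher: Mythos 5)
Your proposal is correct and is essentially the same argument the paper relies on (its proof simply invokes the argument of \cite[Lemma~4.2]{miyata2023graph} with directed edges): one contracts all unmarked non-loop edges and deletes the unmarked self-loops so that the $d$ chosen edges survive into a quiver with at most $d$ edges and boundedly many vertices, giving finitely many generators via Lemma~\ref{lem:fingengen}. The disconnected-case worry you flag is handled by the paper's standing convention that only connectivity-preserving deletions (hence, in effect, connected quivers) are considered, so your count of isomorphism types goes through.
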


\begin{proof}
The proof follows the same arguments of \cite[Lemma~4.2]{miyata2023graph}; by replacing the sets of edges with the sets of \emph{directed} edges. 
\end{proof} 

Although defined in a similar way, note that the vertex module
\[
\cV\colon \Quiver_{\leq g}^\op \to \Mod_R
\]
is not finitely generated. However, we have the following:

\begin{prop}\label{thm:Vkfg}
{Let $R$ be a ring with identity.} Then, ${\rm Hom}(\mathcal{V}^{\oplus k},R)$ is finitely generated, as $\Quiver^{\op}_{\leq g}$-module, for each $k$. 
\end{prop}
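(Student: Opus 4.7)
The plan is to reduce to $k=1$ and then exhibit three explicit generators whose pullbacks span $\mathrm{Hom}(\mathcal{V},R)(\tG)=R^{V(\tG)}$ for every connected quiver $\tG$ of genus at most $g$. Since $\mathrm{Hom}(\mathcal{V}^{\oplus k},R)\cong \mathrm{Hom}(\mathcal{V},R)^{\oplus k}$ pointwise, and finite direct sums of finitely generated modules are finitely generated, it suffices to treat $k=1$. I use the $\Quiver^{\op}_{\leq g}$-module structure on $\mathrm{Hom}(\mathcal{V},R)$ induced by pullback of functions along the vertex maps: a minor morphism $\phi\colon \tG'\to\tG$ gives a surjection $\phi|_V\colon V(\tG')\to V(\tG)$, and hence a pullback $R^{V(\tG)}\to R^{V(\tG')}$, $f\mapsto f\circ \phi|_V$. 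Under this map, the characteristic function $\mathbb{1}_{\{w\}}$ of $w\in V(\tG)$ is sent to the characteristic function $\mathbb{1}_{\phi^{-1}(w)}$ of the tree fiber in $\tG'$.

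As generators I take: the constant $1 \in \mathrm{Hom}(\mathcal{V},R)(\tH_0) = R$, where $\tH_0$ is the single-vertex quiver; and the two endpoint indicators $\mathbb{1}_{v_1}, \mathbb{1}_{v_2} \in \mathrm{Hom}(\mathcal{V},R)(\tH_1)$, where $\tH_1$ is the quiver with a single directed edge $v_1 \to v_2$. Since the indicators $\mathbb{1}_{\{v\}}$, $v\in V(\tG)$, form an $R$-basis of $R^{V(\tG)}$, it suffices to express each such $\mathbb{1}_{\{v\}}$ as an $R$-linear combination of pullbacks of the three chosen generators. If $v$ is not a cut vertex of $\tG$, then $V(\tG)\setminus \{v\}$ is connected, so the partition $(\{v\}, V(\tG)\setminus\{v\})$ determines a valid minor morphism $\tG\to\tH_1$ (up to the orientation of $\tH_1$, matched to some edge incident to $v$); the pullback of the appropriate endpoint indicator yields $\mathbb{1}_{\{v\}}$ directly.

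The cut-vertex case is the heart of the argument. If $v$ is a cut vertex with connected components $C_1,\dots,C_k$ in $\tG\setminus\{v\}$, where $k\geq 2$, then for each $j$ the set $S_j\coloneqq \{v\}\cup\bigcup_{i\neq j}C_i$ is connected, its complement $C_j$ is connected (as a component of $\tG\setminus\{v\}$), and $v$ is joined to $C_j$ by some edge of $\tG$ (since $\tG$ itself is connected). Hence $(S_j, C_j)$ determines a valid minor morphism $\tG\to\tH_1$, producing the pullback $\mathbb{1}_{S_j}$. A pointwise check yields the inclusion–exclusion identity
\[
\mathbb{1}_{\{v\}}=\sum_{j=1}^{k}\mathbb{1}_{S_j}-(k-1)\,\mathbb{1}_{V(\tG)},
\]
where the constant function $\mathbb{1}_{V(\tG)}$ is itself the pullback of $1 \in \mathrm{Hom}(\mathcal{V}, R)(\tH_0)$ along the collapsing morphism $\tG \to \tH_0$. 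The main obstacle is precisely this cut-vertex case: verifying that each partition $(S_j, C_j)$ is genuinely realisable as a minor morphism to $\tH_1$ (the connectivity and edge conditions do all the work), and that the inclusion–exclusion identity genuinely recovers the singleton indicator. The genus bound $g$ plays only a supporting role, ensuring finiteness of the objects and of the minor morphisms involved at each step.
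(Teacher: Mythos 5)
Your argument is correct for the statement as written, but it takes a genuinely different route from the paper. The paper's proof runs the argument of \cite[Theorem~3.11]{torsionCC} in the quiver setting: there all generating morphisms are \emph{contractions}, the generators are the oriented one-vertex quivers $\tR(m)$ and the two-vertex quivers $\tS(m_1,m_2,m_2',m_3)$ of Figure~\ref{fig:gens-vertex-comodule}, and the genus bound is exactly what keeps this list finite, since a contraction cannot discard the surviving edges, which accumulate as loops and parallel edges on the one or two remaining vertices. You instead exploit the deletions available among minor morphisms: once deleting is allowed, every partition of a connected quiver into two connected blocks joined by at least one edge is realised by a minor morphism onto the single-edge quiver $\tH_1$ (contract spanning trees of the blocks, keep one crossing edge of matching orientation, delete everything else), and your cut-vertex inclusion--exclusion identity does recover each indicator function $\mathbf{1}_{\{v\}}$ (the pointwise check, the connectivity of the sets $S_j$, and the existence of an edge from $v$ into each component $C_j$ are all fine; your restriction to connected quivers matches the paper's standing conventions). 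What each approach buys: yours needs only three generators supported on two genus-zero objects and never uses the genus bound -- it actually proves finite generation over all of $\Quiver^{\op}$ -- whereas the paper's argument needs the bound on $g$ but never uses deletions.

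The caveat to be aware of: because your generating morphisms delete edges, your proof gives finite generation only over the minor category $\Quiver^{\op}_{\leq g}$, not over the contraction category $\CQuiver^{\op}_{\leq g}$. The paper's contraction-only argument yields the stronger contraction-category statement, and that is the form invoked later: in the proof of Theorem~\ref{thm:mgfg} the module $\Hom(\mathcal{V}^{\oplus k},R)$ is used as a finitely generated $\CQuiver^{\op}_{\leq g}$-module, since magnitude cohomology is functorial only for contractions. So your proof establishes the proposition as stated but could not be substituted for the paper's without breaking that application; to recover the contraction-category version you would have to replace $\tH_0$ and $\tH_1$ by the genus-bounded families $\tR(m)$ and $\tS(m_1,m_2,m_2',m_3)$, which is precisely what the paper does.
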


\begin{proof}
The proof follows the same arguments of \cite[Theorem~3.11]{torsionCC}, with only two small changes: the edges of $\tR(m)$ have to be oriented, and the quiver $\tS(m_1,m_2,m_2', m_3)$ in Figure~\ref{fig:gens-vertex-comodule}  replaces the graph $\tS(m_1,m_2,m_3)$ used in~\cite{torsionCC}.
\begin{figure}
    \centering
    \begin{tikzpicture}[thick, scale = 1.3]
    \draw[fill] (0,0) circle (.05);
    \draw[fill] (2,0) circle (.05);
    \node (a) at (0,0) {};
    \node (b) at (2,0) {};

    \draw[red, -latex] (a) .. controls +(.5,.5) and +(-.5,.5) .. (b);
    \draw[red, -latex] (a) .. controls +(.1,1) and +(-.15,0) .. (1,1) .. controls +(.15,0) and +(-.1,1) .. (b);
    \draw[blue, -latex] (a) -- (b);
    \draw[red, -latex] (b) .. controls +(-.5,-.5) and +(.5,-.5) .. (a);
    \draw[red, -latex] (b) .. controls +(-.1,-1) and +(-.15,0) .. (1,-1) .. controls +(.15,0) and +(.1,-1) .. (a);
    \draw[red, -latex] (a) .. controls +(-.5,-.5) and +(-.5,.5) .. (a);
    \draw[red, -latex] (a) .. controls +(-.25,-1) and +(0,-.5) .. (-1,0); \draw[red ] (-1,0) .. controls +(0,.5) and +(-.25,1) .. (a);
    \draw[red, -latex] (b) .. controls +(.5,-.5) and +(.5,.5) .. (b);
    \draw[red, -latex] (b) .. controls +(.25,-1) and +(0,-.5) .. (3,0); 
    \draw[red ] (3,0) .. controls +(0,.5) and +(.25,1) .. (b);

    \node[below]  at (2.7,0) {$\dots$};
    \node[below]  at (-.7,0) {$\dots$};
    \node  at (-.7,0.2) {$m_1$};
    \node  at (2.7,0.2) {$m_3$};
    
    \draw[line width = 3, color = white] (-1.1,0) .. controls +(0,.15) and +(0,-.15) .. (-.7,0.1) .. controls +(0,-.15) and +(0,.15) .. (-.3,0);
    \draw (-1.1,0) .. controls +(0,.15) and +(0,-.15) .. (-.7,0.1) .. controls +(0,-.15) and +(0,.15) .. (-.3,0);

    \draw[line width = 3, color = white] (3.1,0) .. controls +(0,.15) and +(0,-.15) .. (2.7,0.1) .. controls +(0,-.15) and +(0,.15) .. (2.3,0);
    \draw (3.1,0) .. controls +(0,.15) and +(0,-.15) .. (2.7,0.1) .. controls +(0,-.15) and +(0,.15) .. (2.3,0);

    \node[left]  at (1,.775) {$\vdots$};
    \node[left]  at (1,-.625) {$\vdots$};
    \node[right]  at (1,.7) {$m_2$};
    \node[right]  at (1,-.7) {$m'_2$};
    
    \draw[line width = 3, color = white] (.95,1.1) .. controls +(.15,0) and +(-.15,0) .. (1.05,.7) .. controls +(-.15,0) and +(.15,0) .. (.95,.3);
    \draw (.95,1.1) .. controls +(.15,0) and +(-.15,0) .. (1.05,.7) .. controls +(-.15,0) and +(.15,0) .. (.95,.3);
    
    \draw[line width = 3, color = white] (.95,-1.1) .. controls +(.15,0) and +(-.15,0) .. (1.05,-.7) .. controls +(-.15,0) and +(.15,0) .. (.95,-.3);
    \draw (.95,-1.1) .. controls +(.15,0) and +(-.15,0) .. (1.05,-.7) .. controls +(-.15,0) and +(.15,0) .. (.95,-.3);
    \end{tikzpicture}
    \caption{The graph $\tS(m_1,m_2,m_2', m_3)$. The edge in blue goes from $v_1$ to $v_2$.}
    \label{fig:gens-vertex-comodule}
\end{figure}
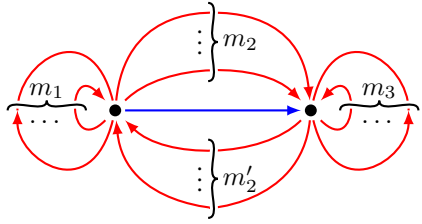
\end{proof}

\subsection{Categories of directed cones}

For a given quiver $\tG$, a \emph{cone} over $\tG$, denoted $\Cone(\tG)$, is a quiver obtained by adding a new vertex to $\tG$, while also adding (directed) edges to connect it with every vertex of $\tG$. Note that for our purposes, the added \emph{cone point} is connected to each vertex of $\tG$ by exactly one edge. In other words, we do not allow for multi-edges connected to the cone point.

Let $\tG,\tG'$ be two quivers. Then a \emph{base contraction} from $\Cone(\tG)$ to $\Cone(\tG')$ is a minor morphism $\Cone(\tG) \rightarrow \Cone(\tG')$ involving the contraction of edges of $\tG$, thought of as living inside of $\Cone(\tG)$, followed by the deletion of the newly created multiedges that are connected to the cone point of $\Cone(\tG')$. Similarly, a \emph{base deletion} is a minor morphism $\Cone(\tG) \rightarrow \Cone(\tG')$ involving the deletion of edges of $\tG$, thought of as a subquiver of $\Cone(\tG)$. More generally, a \emph{base minor morphism} is any minor morphism $\Cone(\tG) \rightarrow \Cone(\tG')$ that can be written as a composition of base contractions and base deletions. We will write $\Cone(\Quiver_{\leq g})$ for the category of directed cones whose base graph has genus at most $g$, and whose morphisms are base minor morphisms.

Our first result will tell us that representations of the cone category satisfy a Noetherian Property.

\begin{prop}\label{prop:cones}
    The category $\Cone(\Quiver_{\leq g})^{\op}$ is quasi-Gr\"obner. In particular, subrepresentations of finitely generated representations of $\Cone(\Quiver_{\leq g})^{\op}$ are also finitely generated. 
\end{prop}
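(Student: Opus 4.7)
The plan is to apply Proposition~\ref{prop:qGrobner} with a cone functor $\cF\colon \Quiver_{\leq g}^{\op}\to \Cone(\Quiver_{\leq g})^{\op}$, reducing the desired statement to the already-established quasi-Gr\"obner property of $\Quiver_{\leq g}^{\op}$ (Theorem~\ref{thm:contractcatfg}).

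I would define $\cF$ as the functor induced by the cone construction: on objects, $\cF(\tG)=\Cone(\tG)$, and on a minor morphism $\gamma\colon \tG\to\tH$, $\cF(\gamma)$ is the base minor morphism $\Cone(\tG)\to\Cone(\tH)$ which fixes the cone point, restricts to $\gamma$ on the base quiver, and discards the duplicate cone-point edges created whenever $\gamma$ contracts an edge. Essential surjectivity of $\cF$ is immediate from the definition of $\Cone(\Quiver_{\leq g})$, as every object is by construction a cone over a quiver of genus at most $g$.

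The crucial observation for property~(F) is that $\cF$ is \emph{full} on morphisms. Any base minor morphism $\Cone(\tG)\to\Cone(\tH)$ decomposes, by definition, into a composition of base contractions and base deletions; each such atomic operation is by construction induced by a contraction or deletion on the base quiver, so the whole composition arises as $\Cone(\gamma)$ for a suitable minor morphism $\gamma\colon \tG\to\tH$. Granting this fullness, property~(F) for $\cF$ becomes tautological: for each object $\Cone(\tH)$ one takes the singleton list $(\tH, \id_{\Cone(\tH)})$, and every morphism $\delta\colon \Cone(\tG)\to\Cone(\tH)$ factors as $\id_{\Cone(\tH)}\circ \Cone(\gamma)$ for some $\gamma$.

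Combining these ingredients, Proposition~\ref{prop:qGrobner} and Theorem~\ref{thm:contractcatfg} yield that $\Cone(\Quiver_{\leq g})^{\op}$ is quasi-Gr\"obner, and the statement about subrepresentations follows from Theorem~\ref{rem:qGrobner is Noeth}. The main obstacle, such as it is, is the bookkeeping needed to justify the fullness claim and the well-definedness of $\Cone(\gamma)$: one needs to check that the cone point is necessarily preserved by any base minor morphism, and that iterated contractions produce the expected pattern of multi-edges at the cone point, so that deleting them recovers precisely the cone of $\gamma$.
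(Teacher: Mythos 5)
Your overall strategy (produce a functor onto $\Cone(\Quiver_{\leq g})^{\op}$ from a category already known to be (quasi-)Gr\"obner and invoke Proposition~\ref{prop:qGrobner} / property (F)) is the right one, but there is a genuine gap in the choice of source category: the assignment $\tG\mapsto \Cone(\tG)$ is not well defined on $\Quiver_{\leq g}^{\op}$. In this paper a cone is a \emph{directed} cone, and each cone edge may be oriented either toward or away from the cone point; a base quiver $\tG$ therefore admits $2^{\vert V(\tG)\vert}$ cones which are in general pairwise non-isomorphic, since isomorphisms in $\Cone(\Quiver_{\leq g})$ respect edge directions. So either $\cF(\tG)=\Cone(\tG)$ is ambiguous, or, if you fix one orientation convention (say all cone edges pointing away from the cone point), $\cF$ fails to be essentially surjective and Proposition~\ref{prop:qGrobner} cannot be applied. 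The same issue undermines the fullness claim: a base contraction of a base edge whose two endpoints carry oppositely oriented cone edges requires a choice of which of the two resulting cone edges at the merged vertex to delete, the two choices land in non-isomorphic targets, and neither target need be the cone of the contracted base quiver under your fixed convention; so such morphisms are not of the form $\Cone(\gamma)$, and your ``tautological'' verification of property (F) collapses with it.

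The paper avoids this by remembering the cone-edge orientations as extra data on the source category: it starts from the rigidified Gr\"obner category $P\Quiver_{\leq g}^{\op}$ and adds to each vertex a label from the finite set $\{\mathrm{u},\mathrm{d}\}$ recording whether the cone edge at that vertex points up or down; this labelled category is still Gr\"obner by the same Kruskal-tree-theorem argument as in Proposition~\ref{prop:GrobnerQuiver} (finite label sets are allowed), and the functor to $\Cone(\Quiver_{\leq g})^{\op}$ orients the cone edges according to the labels, with the rooted spanning tree dictating which multi-edges at the cone point are deleted under a contraction. Note also that the paper never needs fullness, only essential surjectivity together with property (F). To repair your argument you would have to replace $\Quiver_{\leq g}^{\op}$ by some category of quivers equipped with a $\{\mathrm{u},\mathrm{d}\}$-labelling of vertices and prove that \emph{that} category is quasi-Gr\"obner --- which is essentially the content of the paper's proof, not something you can quote from Theorem~\ref{thm:contractcatfg}.
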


\begin{proof}
    We begin with the category $P\Quiver_{\leq g}^{\op}$, and modify its objects to add an extra label to each vertex. Specifically, this label will be either 'u' (for up) or 'd' (for down). We also modify our morphisms to insist that the newly added label must be preserved. This new category will be Gr\"obner following an essentially identical proof to Proposition \ref{prop:GrobnerQuiver}.

    Consider now the functor from our new category to $\Cone(\Quiver_{\leq g})^{\op}$, which sends every object to the cone over its underlying graph, while orienting the cone edges in the direction dictated by the vertex labels. A morphism in our category naturally defines a morphism in $\Cone(\Quiver_{\leq g})^{\op}$ by performing the analogous base deletions and contractions. In the case of a contraction, the cone edges which are deleted by the corresponding base contraction are precisely those whose vertices were originally further from the selected spanning tree's root.

    Our functor is essentially surjective by constuction, and it is immediate that every base deletion and contraction appears in the image of some map. In particular, our functor has property (F), as desired.
\end{proof}

One benefit of the cone category is that it allows us to (partially) get around the restriction of bounded genus. For instance, $\Cone(\Quiver_{\leq g})^{\op}$ includes such families as the Wheel graphs, which are cones over cycles, and the Thagomizer graphs, which are cones over star trees.

\begin{rem}
    The idea to expand categories of bounded genus by considering cones was first done by Proudfoot and the third author in \cite{proudfoot2019functorial}. In that work, only undirected trees are considered. This work will therefore expand previous applications both by allowing for higher genera bases, and also allowing for the edges - cone or otherwise - to be directed.
\end{rem}

For our applications in the sections that follow, we will need to state an analogous result to Proposition~\ref{prop:edgemodfg}. Once again the proof is similar to that of \cite{miyata2023graph}.

\begin{prop}\label{prop:ConeEdge}
The $\Cone(\Quiver_{\leq g})^{\op}$-module,
\[
\tG \mapsto \bZ^{\vert E(\tG)\vert}
\]
is finitely generated. Moreover, the same is true of its tensor powers.
\end{prop}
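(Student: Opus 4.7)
The plan is to adapt the argument of Proposition~\ref{prop:edgemodfg} (itself a directed version of \cite[Lemma~4.2]{miyata2023graph}) to the setting of cones, being careful about how base minor morphisms interact with cone edges. Via Lemma~\ref{lem:fingengen}, it suffices to exhibit a finite set of pairs $(\Cone(\tH_i),\xi_i)$, with $\xi_i\in\bZ^{\vert E(\Cone(\tH_i))\vert^{\otimes d}}$, such that for every cone $\Cone(\tG)$ and every $d$-tuple of edges $(e_1,\dots,e_d)\in E(\Cone(\tG))^d$ there is a base minor morphism $\psi\colon \Cone(\tG)\to \Cone(\tH_i)$ with $\psi^*(\xi_i)$ equal to the distinguished tuple in~$\Cone(\tG)$.

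First, I would set up the candidate generating list. Each edge of a cone is either a base edge (oriented one of two ways) or a cone edge attached to some base vertex (oriented one of two ways, up or down). Since the base has genus at most $g$, the "essential core" needed to carry a chosen $d$-tuple of edges is bounded: one can keep only those base vertices that occur as non-cone endpoints of the $d$ edges, together with at most $d$ base edges and a collection of cone edges indexed by the kept base vertices (plus, for genus $\leq g$ cores, finitely many extra loops). There are only finitely many such isomorphism types of $(\Cone(\tH),(e_1,\dots,e_d))$ up to the equivalence used to define $\Cone(\Quiver_{\leq g})$, and this finite list is my proposed generating set.

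Second, I would verify that the list generates. Given $(\Cone(\tG),(e_1,\dots,e_d))$, write $V'\subset V(\tG)$ for the non-cone-point endpoints of the selected tuple, and $E_b'\subset E(\tG)$ for the base edges among the $e_j$. I would proceed in two stages: (i) apply base deletions to remove all base edges that are neither in $E_b'$ nor needed to keep $V'$ in the same connected components of $\tG$; (ii) apply base contractions along a spanning forest of the remaining "auxiliary" edges, merging every base vertex into one of the selected vertices in $V'$. Each such contraction triggers the canonical deletion of one of the two newly created parallel cone edges; one must choose the contractions so that the cone edges among the selected $e_j$ are always the ones retained, which is possible because a cone edge is only endangered when its endpoint is being merged along a base edge, and we can order the contractions to merge towards the selected endpoint.

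The main obstacle is precisely this last point: unlike plain quivers, cone edges cannot be individually deleted or contracted by base minor morphisms, so one must argue that there exists an order of base contractions and deletions realising the desired reduction while sparing all cone edges from the distinguished tuple. This is essentially a tree-pruning argument on $\tG$ and should be treated once carefully, after which the tensor-power statement follows with $d$-tuples in place of single edges, using that $|V'|\le 2d$ and that only finitely many genus-$\leq g$ cores with $\le d$ kept edges exist.
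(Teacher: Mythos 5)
Your proposal takes essentially the same approach as the paper, which for this proposition only defers to the argument of Proposition~\ref{prop:edgemodfg} and \cite[Lemma~4.2]{miyata2023graph}: you collect a bounded list of cores and realise each $d$-tuple of edges of $\Cone(\tG)$ via a base minor morphism to one of them. You also correctly isolate the only cone-specific subtlety—that cone edges are removed solely as a side-effect of base contractions—and the fix you gesture at does go through: arrange the contraction data so that each merged group of base vertices contains at most one vertex carrying a distinguished cone edge (equivalently, never contract a base edge joining two such vertices, deleting it instead), and at each contraction retain the distinguished one of the two newly parallel cone edges; after these contractions, all remaining non-distinguished base edges may be deleted without affecting the cone edges.
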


\section{Applications to multipath complexes}\label{sec:multipath}
In this section, we apply the abstract technology of Section~\ref{sec:Grobner_digraph} to simplicial complexes arising from monotone properties of directed graphs~\cite{Jonsson}. More specifically, we will deal with multipath complexes~\cite{jason}.

\subsection{Multipath complexes and their torsion}

We are interested in the monotone property of digraphs on disjoint sets of simple paths; following~\cite{turner, primo}, we call them multipaths:

\begin{defn}\label{def:multipaths}
A \emph{multipath} of a quiver~$\tG$ is a spanning subquiver such that each connected component is either a vertex or a simple path. The \emph{length} of a multipath is the number of its edges. 
\end{defn}

The \emph{path poset} of a quiver $\tG$ is the poset $(P(\tG),\leq )$, where $P(\tG)$ is the set of multipaths in $\tG$ (including the multipath with no edges), ordered by the relation of being a subgraph. 
To the path poset $P(\tG)$ we associate a simplicial complex: 

\begin{defn}[{\cite[Definition~6.2]{secondo}}]\label{def:pathcomplx}
For a quiver~$\tG$, the \emph{multipath complex} $X(\tG)$ is the simplicial complex whose face poset (augmented to include the empty simplex~$\emptyset$) is the path poset $P(\tG)$. 
\end{defn}

\begin{figure}[h]
\centering
	\begin{tikzpicture}[baseline=(current bounding box.center),line join = round, line cap = round]
		\tikzstyle{point}=[circle,thick,draw=black,fill=black,inner sep=0pt,minimum width=2pt,minimum height=2pt]
		\tikzstyle{arc}=[shorten >= 8pt,shorten <= 8pt,->, thick]
		\def\c{8}\def\d{.5}
		\node[above] (v0) at (0-\c,0+\d) {$v_0$};\draw[fill] (0-\c,0+\d)  circle (.05);
		\node[above] (v1) at (1.5-\c,0+\d) {$v_1$};\draw[fill] (1.5-\c,0+\d)  circle (.05);
		\node[above] (v2) at (3-\c,0+\d) {$v_{2}$};\draw[fill] (3-\c,0+\d)  circle (.05);
		\node[above] (v3) at (4.5-\c,0+\d) {$v_{3}$};\draw[fill] (4.5-\c,0+\d)  circle (.05);
		
		\draw[thick, bunired, -latex] (0.15-\c,0+\d) -- (1.35-\c,0+\d);
		\draw[thick, bunired, -latex] (1.65-\c,0+\d) -- (2.85-\c,0+\d);
		\draw[thick, bunired, -latex] (3.15-\c,0+\d) -- (4.35-\c,0+\d);
		
		\node (e1) at (0,0) {};
		\node (e2) at (2,0) {};
		\node (e3) at (60:2) {};
		\draw[pattern=north west lines, pattern color=bluedefrance,thick] (e1.center) -- (e2.center) -- (e3.center) -- (e1.center);
		\node[circle,fill=bunired,scale=0.5] at (e1) {};\node[below] at (e1) {$(v_0,v_1)$};
        \node[circle,fill=bunired,scale=0.5] at (e2) {};\node[below] at (e2) {$(v_1,v_2)$};
        \node[circle,fill=bunired,scale=0.5] at (e3) {};\node[above] at (e3) {$(v_2,v_3)$};

    \end{tikzpicture}
	\begin{tikzpicture}
        \def\y{1}\def\x{3}
        \tikzstyle{sp}=[circle,fill=black,scale=0.3]
        \tikzstyle{se}=[thick, bunired, -latex]
        \node (p123) at (0*\x,3*\y){\begin{tikzpicture}[scale=0.5]\node[sp] (0) at (0,0){};\node[sp] (1) at (1,0){};\node[sp] (2) at (2,0){};\node[sp] (3) at (3,0){};\draw[se] (0) -- (1);\draw[se] (1) -- (2);\draw[se] (2) -- (3);\end{tikzpicture}};
        
        \node (p12) at (-1*\x,2*\y){\begin{tikzpicture}[scale=0.5]\node[sp] (0) at (0,0){};\node[sp] (1) at (1,0){};\node[sp] (2) at (2,0){};\node[sp] (3) at (3,0){};\draw[se] (0) -- (1);\draw[se] (1) -- (2);\end{tikzpicture}};
        \node (p13) at (0*\x,2*\y){\begin{tikzpicture}[scale=0.5]\node[sp] (0) at (0,0){};\node[sp] (1) at (1,0){};\node[sp] (2) at (2,0){};\node[sp] (3) at (3,0){};\draw[se] (0) -- (1);\draw[se] (2) -- (3);\end{tikzpicture}};
        \node (p23) at (1*\x,2*\y){\begin{tikzpicture}[scale=0.5]\node[sp] (0) at (0,0){};\node[sp] (1) at (1,0){};\node[sp] (2) at (2,0){};\node[sp] (3) at (3,0){};\draw[se] (1) -- (2); \draw[se] (2) -- (3);\end{tikzpicture}};
        \node (p1) at (-1*\x,1*\y){\begin{tikzpicture}[scale=0.5]\node[sp] (0) at (0,0){};\node[sp] (1) at (1,0){};\node[sp] (2) at (2,0){};\node[sp] (3) at (3,0){};\draw[se] (0) -- (1);\end{tikzpicture}};
        \node (p2) at (0*\x,1*\y){\begin{tikzpicture}[scale=0.5]\node[sp] (0) at (0,0){};\node[sp] (1) at (1,0){};\node[sp] (2) at (2,0){};\node[sp] (3) at (3,0){};\draw[se] (1) -- (2);\end{tikzpicture}};
        \node (p3) at (1*\x,1*\y){\begin{tikzpicture}[scale=0.5]\node[sp] (0) at (0,0){};\node[sp] (1) at (1,0){};\node[sp] (2) at (2,0){};\node[sp] (3) at (3,0){};\draw[se] (2) -- (3);\end{tikzpicture}};
        \node (p0) at (0*\x,0*\y){\begin{tikzpicture}[scale=0.5]\node[sp] (0) at (0,0){};\node[sp] (1) at (1,0){};\node[sp] (2) at (2,0){};\node[sp] (3) at (3,0){};\end{tikzpicture}};
        \draw[thick, dotted] (p123) -- (p12) -- (p1) -- (p0);
        \draw[thick,dotted] (p123) -- (p23) -- (p2) -- (p0);
        \draw[thick,dotted] (p123) -- (p13) -- (p3) -- (p0);
        \draw[thick,dotted] (p12) -- (p2);
        \draw[thick,dotted] (p23) -- (p3);
        \draw[thick,dotted] (p13) -- (p1);
	\end{tikzpicture}

	\caption{The coherently oriented linear graph $\tI_3$ (top left), the multipath complex $X(\tI_3)$ (top right), and the path poset $P(\tI_3)$ (bottom).}
	\label{fig:nstep}
\end{figure}
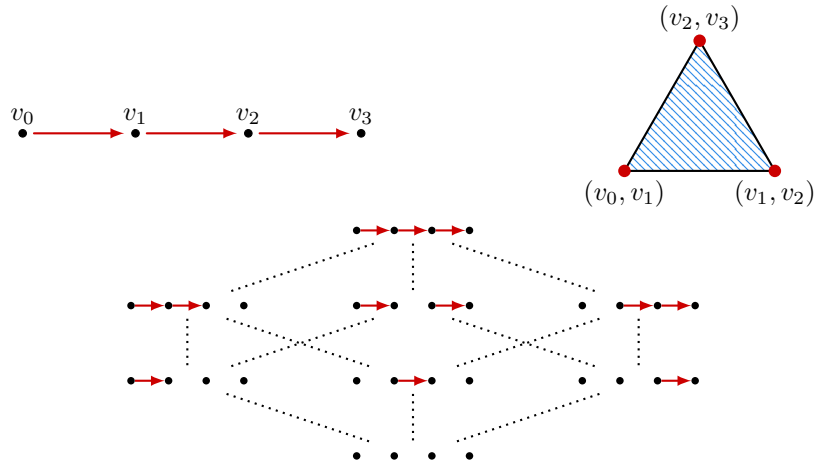

Equivalently, $X(\tG)$ is the simplicial complex on the edges of $\tG$ whose $k$-simplices are given by the multipaths in
$\tG$ of length $k-1$. When $\tG$ is the complete digraph, the associated simplicial complex is also called \emph{cycle free chessboard complex}, denoted $\Omega_n$ in~\cite{Omega}. 
Since being a multipath is a monotone property of digraphs, that is it is closed under subgraphs -- cf.~\cite{bjorner_cdg} or \cite{Jonsson}, it follows that $X(\tG)$ is a well-defined simplicial complex.

\begin{lemma}\label{lem:functX}
The association $\tG\mapsto X(\tG)$ yields a functor
\[
X\colon \Quiver^\op\to\mathbf{SimplComp}
\]
 to the  category of simplicial complexes and simplicial maps. 
\end{lemma}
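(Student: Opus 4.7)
The plan is to define $X$ on morphisms using the bijection between the edges of the target and the non-deleted edges of the source guaranteed by the definition of a minor morphism, and then verify that the resulting map of vertex sets of the multipath complexes extends to a simplicial map.

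First, given a minor morphism $\phi\colon \tG' \to \tG$, the third axiom of a minor morphism gives a bijection $\phi^{-1}\colon E(\tG) \to \phi^{-1}(E(\tG)) \subseteq E(\tG')$. Since the $0$-simplices of $X(\tG)$ are in bijection with $E(\tG)$, this induces a map on $0$-simplices
\[
X(\phi)_0\colon E(\tG) \longrightarrow E(\tG'),\qquad e \longmapsto \phi^{-1}(e),
\]
which is injective. I will set $X(\phi)$ to be the simplicial map determined by $X(\phi)_0$.

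The main step is to check that $X(\phi)_0$ sends multipaths to multipaths, so that $X(\phi)$ is a well-defined simplicial map. Let $M$ be a multipath of $\tG$ and let $p = e_1 e_2 \cdots e_k$ be one of its simple path components, with $s(e_i) = v_{i-1}$ and $t(e_i) = v_i$. By the axioms of a minor morphism, the preimage $\phi^{-1}(v_i)$ is a directed tree $\tT_i$ in $\tG'$, and $\phi^{-1}(e_i)$ is an edge $e'_i$ from a vertex of $\tT_{i-1}$ to a vertex of $\tT_i$. Since the trees $\tT_0,\dots,\tT_k$ are pairwise vertex-disjoint, the endpoints of the $e'_i$ are disjoint except possibly at shared vertices $t(e'_i) = s(e'_{i+1})$ inside $\tT_i$; in either case, the edges $\{e'_1,\dots,e'_k\}$ form a disjoint union of simple paths in $\tG'$. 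Since different path components of $M$ use disjoint preimages of vertices, the full collection $\{\phi^{-1}(e) : e \in M\}$ is edge-disjoint and its union is again a disjoint union of simple paths, so it spans a multipath in $\tG'$. This shows that $X(\phi)$ carries simplices to simplices of the same dimension, hence is a simplicial map.

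Finally, functoriality is immediate: $X(\id_\tG) = \id_{X(\tG)}$ since the identity minor morphism induces the identity bijection on edges, and for composable $\phi\colon\tG'\to\tG$, $\psi\colon\tG''\to\tG'$ the bijection $(\phi\circ\psi)^{-1}$ agrees with $\psi^{-1}\circ\phi^{-1}$ on edges, so $X(\phi\circ\psi) = X(\psi)\circ X(\phi)$, yielding the desired contravariance. The only genuinely non-formal step is the multipath verification above; the mild subtlety there is realising that, because preimages of vertices are directed trees (not arbitrary subquivers), consecutive pullback edges $e'_i,e'_{i+1}$ cannot create branchings but only either merge into a single simple path or split into two disjoint simple paths.
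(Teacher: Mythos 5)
Your proposal is correct and follows essentially the same route as the paper's proof: define $X(\phi)$ on vertices via the edge bijection from the minor-morphism axioms, check that preimages of multipaths are multipaths (the paper phrases this as $\phi^{\op}$ preserving adjacency, hence simple paths and multipaths), and verify compatibility with identities and compositions. One cosmetic remark: what your multipath verification actually uses is only that preimages of distinct vertices are pairwise disjoint and that each vertex $v_i$ meets at most two edges of the path (one incoming, one outgoing), not the directed-tree structure of the fibres, but this does not affect correctness.
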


\begin{proof}
Let $\phi\colon\tG'\to \tG$ be a minor morphism of quivers. Then, the morphism $\phi^\op$ is injective on the set of edges: for $e$ an edge of $\tG$, $\phi^\op(e)$ is again an edge, of $\tG'$. Furthermore, $\phi^\op$ preserves the adjacency relation, hence it sends simple paths to simple paths, and, in turn, multipaths to multipaths. Lengths and containment relations are also preserved, from which we get a simplicial map $X(\tG)\to X(\tG')$. As the construction preserves identities and compositions, the statement follows.
\end{proof}

Denote by $\ast$ the join operation of simplicial complexes -- cf.~\cite[Definition~2.16]{Kozlov}. Then, for quivers $\tG$ and $\tH$, we have  isomorphisms  
\begin{equation}\label{eq:joinmulti}
X(\tG\sqcup \tH)\cong X(\tG)\ast X(\tH) \ ,
\end{equation}
where $\sqcup$ denotes the disjoint union; see also  \cite[Remark~2.6]{jason}. 
We observe that the disjoint union on $\Quiver$ and the join on ${\bf SimplComp}$ yields  monoidal structures on the respective categories. 
Equaition~\eqref{eq:joinmulti} implies that the functor $X$ is symmetric monoidal, up to isomorphism of simplicial complexes. 

\begin{example}
Consider the coherently oriented linear digraph $\tI_n$ -- see Figure~\ref{fig:nstep} for an illustration of the quiver~$\tI_3$.
The path poset~$(P(\tI_n), \leq)$ is isomorphic to the Boolean poset~$\mathbb{B}(n)$. Thence, the associated multipath complex is an $(n-1)$-dimensional simplex.
Likewise, consider the coherently oriented polygonal digraph $\tP_n$ with $n$ edges, obtained from $\tI_{n}$ by identifying the vertices $v_0$ and $v_{n}$. Then, the path poset $(P(\tP_n),\leq )$ is isomorphic to the Boolean poset $\mathbb{B}(n)$ minus its maximum, and the corresponding multipath complex is a $(n-2)$-dimensional sphere.
Further, if we consider the minor morphism $\phi\colon \tI_n \to \tI_{n-1}$ given by the contraction of the edge $(v_{n-1},v_n)$, then $X(\phi^{\op})$ is the inclusion of the $(n-2)$-dimensional simplex into the boundary of the $(n-1)$-dimensional simplex as the subcomplex of~the latter spanned by all vertices, but one.
\end{example}

As often happens to simplicial complexes associated to monotone properties of graphs, it is easy to find multipath complexes which are homotopy equivalent to wedges of spheres, and, in particular, torsion free. Examples of such complexes are those associated to directed trees. In fact, as we shall see below, the multipath complex of a directed tree is homotopy equivalent to the join of matching complexes of undirected trees, cf.~Proposition~\ref{prop:multipathmatchingtree}. On the other hand,  matching complexes of trees are contractible or homotopy equivalent to wedges of spheres~\cite[Theorem 4.13]{MR2426164}. It follows that the multipath complex of a directed tree is either contractible or homotopy equivalent to a wedge of spheres. 
Further, the complexity of wedges of spheres which can be realised, up to homotopy, as multipath complexes of directed trees can be arbitrarily high; for instance, for each $n$, multipath complexes of directed trees homotopy equivalent to the wedge of an arbitrary number of $n$-dimensional spheres are given in the proof of~\cite[Proposition~6.12]{secondo}.

The fact that the multipath complex of an alternating digraph coincides with the matching complex of the underlying undirected graph~\cite{monotonecohm22} can be used to find multipath complexes whose homology is not torsion free:

\begin{example}
Let $\tK_{n,m}$ be the complete bipartite (undirected) graph on $m,n$ vertices. When equipped with an alternating orientation, the multipath complex~$X(\tK_{n,m})$ coincides with the matching complex of $\tK_{n,m}$ and its homology groups can have torsion. The minimal values of $m$ and $n$ for which the homology groups of $X(\tK_{n,m})$ have torsion are $m=n=5$, and in such case there is $3$-torsion~\cite{torsion}. 
\end{example}

The torsion in the homology of matching complexes of undirected graphs with genus less or equal than a fixed number was shown to be bounded in~\cite[Theorem~1.2]{miyata2023graph}. Therefore, when restricting to complete bipartite graphs with alternating orientation, the torsion of multipath complexes is also bounded. We want to extend this result to categories of quivers with bounded genus. In the spirit of the techniques presented in Section~\ref{sec:Grobner_digraph}, we first need to show that the homology of multipath complexes yields a finitely generated module.

\begin{prop}\label{prop:homologyfgmulti}
For each $i\in \bN$, the homology functor     \[
    \mathrm{H}_i(X(-);\bZ)\colon \Quiver^\op_{\leq g} \to \Mod_R
    \]
    is a finitely generated  $\Quiver^\op_{\leq g}$-module.
\end{prop}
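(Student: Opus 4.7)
The plan is to identify $\mathrm{H}_i(X(-);\bZ)$ as a natural subquotient of an explicitly finitely generated $\Quiver^\op_{\leq g}$-module, and then invoke the Noetherianity consequence of Theorem~\ref{thm:contractcatfg} (via Theorem~\ref{rem:qGrobner is Noeth}). First, by Lemma~\ref{lem:functX}, the multipath complex $X$ is a contravariant functor on minor morphisms whose action on $0$-simplices is precisely the edge inclusion defining $\mathcal{E}_g$. Composing with the simplicial $i$-chain construction, one obtains a $\Quiver^\op_{\leq g}$-module $C_i(X(-);\bZ)$, together with natural boundary operators.

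Next, I would embed this chain module into a module already known to be finitely generated. An $i$-simplex of $X(\tG)$ corresponds to a multipath of length $i+1$, i.e.,~to an $(i+1)$-element subset of $E(\tG)$ satisfying the axioms of Definition~\ref{def:multipaths}. Sending an oriented multipath $[e_0,\dots,e_i]$ to the wedge $e_0\wedge \cdots \wedge e_i$ gives a well-defined injective morphism of $\Quiver^\op_{\leq g}$-modules
\[
C_i(X(-);\bZ) \hookrightarrow \Lambda^{i+1}\mathcal{E}_g,
\]
since the exterior power absorbs the orientation ambiguity into signs, and naturality holds because minor morphisms act on both sides through the very same edge injection.

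By Proposition~\ref{prop:edgemodfg}, the tensor power $\mathcal{E}_g^{\otimes(i+1)}$ is finitely generated, and thus so is its quotient $\Lambda^{i+1}\mathcal{E}_g$. Theorem~\ref{thm:contractcatfg} combined with Theorem~\ref{rem:qGrobner is Noeth} ensures that $\Rep_\bZ(\Quiver^\op_{\leq g})$ is locally Noetherian; hence the submodule $C_i(X(-);\bZ)$ is finitely generated, and so is its natural subquotient $\mathrm{H}_i(X(-);\bZ)=\ker(\partial_i)/\mathrm{im}(\partial_{i+1})$.

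I expect the delicate point to be verifying the naturality of the exterior-power embedding, i.e.,~that orientation signs compose correctly under successive minor morphisms and that the image of a multipath under a minor morphism is again a multipath with a compatible induced orientation; both issues, however, are structural and follow from Lemma~\ref{lem:functX} and the definition of $\mathcal{E}_g$, so they amount to bookkeeping rather than a substantive obstacle. Once the embedding is in place, the conclusion is a formal consequence of Noetherianity, applied twice (once to pass from the exterior power to the chain module, once to pass from the chain module to its homology).
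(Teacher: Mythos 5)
Your proposal is correct and follows essentially the same route as the paper: both identify $\mathrm{H}_i(X(-);\bZ)$ as a natural subquotient of a power of the edge module $\mathcal{E}_g$ (the paper states it directly as a subquotient of $\mathcal{E}_g^{\otimes i}$, while you pass through the chain module inside $\Lambda^{i+1}\mathcal{E}_g$, itself a quotient of a tensor power), invoke Proposition~\ref{prop:edgemodfg} for finite generation, and conclude by Noetherianity via Theorem~\ref{thm:contractcatfg} and Theorem~\ref{rem:qGrobner is Noeth}. Your extra care with the exterior-power embedding and orientation signs is just a more explicit version of the paper's ``subquotient'' claim, so there is no substantive difference.
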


\begin{proof}
    The proof follows the same ideas as \cite[Theorem~4.3]{miyata2023graph}. 
First, we observe that taking chain complexes and homology in a fixed degree is functorial with respect to taking minors. Using Lemma~\ref{lem:functX}, we infer that the functor $\mathrm{H}_i(X(-);\bZ)$ is a $\Quiver^\op_{\leq g}$-module. Furthermore,  $\mathrm{H}_i(X(-);\bZ)$  is a subquotient of the tensor product~$\mathcal{E}_{g}^{\otimes i}$ of the edge module; this module is finitely generated over $\Quiver^\op_{\leq g}$ by Proposition~\ref{prop:edgemodfg}. The statement follows by Theorem~\ref{rem:qGrobner is Noeth}.
\end{proof}

As a consequence of Proposition~\ref{prop:homologyfgmulti}, we have that torsion of multipath complexes is bounded:

\begin{prop}\label{prop:torsionmultipaths}
For every pair of integers $i, g\geq 0$, there exists $m = m(g,i) \in \bZ$ which annihilates the torsion subgroup of $H_i(X(\tG);\bZ)$, for each quiver~$\tG$ of genus at most $g$. 
\end{prop}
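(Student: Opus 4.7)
The plan is to mimic the strategy used in~\cite[Theorem~1.2]{miyata2023graph} for matching complexes, now exploiting the quasi-Gr\"obner property of $\Quiver^{\op}_{\leq g}$ established in Theorem~\ref{thm:contractcatfg}. The crucial point is to realise the torsion as a submodule of a finitely generated $\Quiver^{\op}_{\leq g}$-module and then extract a universal annihilator from a finite generating set.

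First, I would define the subfunctor
\[
T(\tG) \coloneqq \H_i(X(\tG);\bZ)_{\mathrm{tors}}
\]
of $\H_i(X(-);\bZ)$. This is indeed a $\Quiver^{\op}_{\leq g}$-submodule: for any minor morphism $\phi\colon \tG'\to \tG$ in $\Quiver_{\leq g}$, the induced map $\phi_*\colon \H_i(X(\tG);\bZ)\to \H_i(X(\tG');\bZ)$ is $\bZ$-linear, hence sends an element annihilated by $n$ to an element annihilated by $n$; in particular, it restricts to a map $T(\tG)\to T(\tG')$.

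Next, by Proposition~\ref{prop:homologyfgmulti} the module $\H_i(X(-);\bZ)$ is finitely generated. By Theorem~\ref{thm:contractcatfg}, the category $\Quiver^{\op}_{\leq g}$ is quasi-Gr\"obner, so Theorem~\ref{rem:qGrobner is Noeth} guarantees that $\Rep_{\bZ}(\Quiver^{\op}_{\leq g})$ is Noetherian. In particular, the submodule $T$ is itself finitely generated. Invoking Lemma~\ref{lem:fingengen}, we obtain quivers $\tG_1,\dots,\tG_k$ of genus at most $g$ and torsion elements $t_j\in T(\tG_j)$, of orders $n_1,\dots,n_k$ respectively, such that every group $T(\tG)$ is spanned, as an abelian group, by the images $\phi_*(t_j)$ as $\phi$ ranges over $\Hom_{\Quiver^{\op}_{\leq g}}(\tG_j,\tG)$ and $j\in\{1,\dots,k\}$.

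Finally, I would set $m(g,i)\coloneqq \mathrm{lcm}(n_1,\dots,n_k)$: since each $\phi_*(t_j)$ is annihilated by $n_j$, hence by $m$, and any element of $T(\tG)$ is a $\bZ$-linear combination of such images, $m$ annihilates all of $T(\tG)$ for every quiver $\tG$ of genus at most $g$. There is no serious obstacle at this stage, as the heavy lifting has already been done in proving Theorem~\ref{thm:contractcatfg} and Proposition~\ref{prop:homologyfgmulti}; the argument is a direct transcription of the matching-complex case from~\cite{miyata2023graph}. The only point requiring some care is verifying that $T$ is genuinely preserved by \emph{all} minor morphisms (contractions and deletions alike), which is immediate from $\bZ$-linearity of the induced maps on homology.
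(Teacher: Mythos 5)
Your proof is correct and follows essentially the same route as the paper: identify the torsion as a $\Quiver^{\op}_{\leq g}$-submodule of $\mathrm{H}_i(X(-);\bZ)$, deduce its finite generation from Proposition~\ref{prop:homologyfgmulti} together with Theorems~\ref{thm:contractcatfg} and~\ref{rem:qGrobner is Noeth}, and take a least common multiple over a finite generating set. The only cosmetic difference is that you take the lcm of the orders of the generating elements, while the paper takes the lcm of the exponents of the torsion groups $T_i(\tG_j)$; both yield a valid universal annihilator.
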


\begin{proof}
The proof follows the arguments of \cite[Theorem~3.20]{miyata2023graph}, which we include for the sake of completeness. 
Denote by $T_i(\tG)$ the torsion submodule of the homology of multipath complex of $\tG$ in (homological) degree $i$. The functor~$T_i$ is a $\Quiver_{\leq g}^{\op}$-submodule of~$\mathrm{H}_i(X(-);\bZ)$. Then, in virtue of Proposition~\ref{prop:homologyfgmulti} and by~Theorem~\ref{rem:qGrobner is Noeth}, it follows that~$T_i$ is a finitely generated $\Quiver_{\leq g}^{\op}$-module.
By Lemma~\ref{lem:fingengen}, there is a finite number of quivers $\tG_1,\dots,\tG_k$, such that there is a surjective group homomorphism
\[ \bigoplus_{j=1}^{k} T_i(\tG_j) \twoheadrightarrow T_i(\tG)\, ,\]
for each quiver $\tG$ of genus at most $g$. 
Thus, the least common multiple of the exponents of $T_i(\tG_1), \dots, T_i(\tG_k)$ gives the desired~$m(g,i)$. 
\end{proof}

We wish to point out  here that  the statements  in Proposition~\ref{prop:homologyfgmulti} and Proposition~\ref{prop:torsionmultipaths}   can be extended to  monotone properties~$P$ of directed graphs, or quivers, which satisfy the following additional property:  if $\phi\colon \tG\to \tG'$  is a minor
morphism and $\sigma'$ is a simplex in $X_P(\tG')$,
then the subgraph of $\tG$ induced by the image $\phi^*(E(\sigma'))$ is also in $P$. Here we have denoted by $X_P(\tG)$ the simplicial complex associated to $\tG$ via the monotone proeperty~$P$. In the case of the multipaths, note that the corresponding monotone property satisfies this condition. 
   Another interesting monotone property of digraphs is given by (the complex of) directed forests. In such case, as by~\cite{zbMATH05569076},  the complex of directed forests of graphs without oriented cycles is shellable; hence, it has no torsion. This observation leads to the following question, whose answer is not known to the authors:

    \begin{q}
        Does the complex of directed forests of graphs with oriented cycles contain torsion?
    \end{q}

The terminology of this last part of the section is taken from \cite{jason}, and we will limit our discussion to digraphs (rather than all quivers).

Let $\tG$ be a digraph, and let $\tG'\leq \tG$ be a subgraph.  The \emph{complement} $\Comp{\tG'}{\tG}$ of  $\tG'$ in $\tG$ is the subgraph of $\tG$ spanned by the edges in $E(\tG)\setminus E(\tG')$. The \emph{boundary} $\partial_{\tG} \tG'$ of $\tG'$ in $\tG$, or simply $\partial \tG'$ when $\tG$ is clear from the context, is defined as~$\partial_{\tG} \tG'=V(\tG')\cap V(\Comp{\tG'}{\tG})$. 

\begin{defn}
Let $\tG$ be a connected digraph with at least one edge and without self-loops. A vertex $v\in V(\tG)$ is called \emph{stable} if it is not, at the same time, a source and a target (i.e.~if it does not belong to the intersection $s(E(\tG))\cap t(E(\tG))$), and is \emph{unstable} otherwise. 
A \emph{dynamical region} in $\tG$ is a connected subgraph $\tR \leq \tG$, with at least one edge, such that: 
\begin{enumerate}[label = (\alph*)]
    \item\label{item:dandelion} all vertices in the boundary of $\tR$ are unstable in $\tG$, but stable in both $\tR$ and $\Comp{\tR}{\tG}$;
    \item\label{item:cicle} no edge of $\tR$ belongs to any oriented cycle in $\tG$ which is not contained in $\tR$.
\end{enumerate}
A dynamical region is called \emph{stable} if all its non-boundary vertices are stable. It   is called \emph{unstable} if all its non-boundary vertices are~unstable, and it has at least one non-boundary vertex.
A dynamical region which has no proper subgraph which is itself a dynamical region is called a \emph{dynamical module}.
\end{defn}

\begin{lemma}\label{lemma:stabledynmod}
Let $\tM$ be a dynamical module of a connected directed tree. Then, $\tM$ is stable.
\end{lemma}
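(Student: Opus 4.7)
The plan is to argue by contradiction. Suppose $\tM$ is a dynamical module in the connected directed tree $\tG$ that fails to be stable. Then there exists a non-boundary vertex $v \in V(\tM)\setminus \partial \tM$ that is unstable in $\tG$. Since $v$ is non-boundary, every edge of $\tG$ incident to $v$ already lies in $\tM$; combined with the instability of $v$, this means $v$ has at least one incoming and at least one outgoing edge in $\tM$.

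I would then construct an explicit proper sub-dynamical-region $\tR \subsetneq \tM$, obtaining the desired contradiction to minimality of $\tM$. Let $e_1,\dots,e_k$ be the outgoing edges of $v$ in $\tM$, with $e_i = (v,w_i)$. Because $\tM$ is a tree, removing the vertex $v$ decomposes $\tM$ into connected subgraphs $(V_{w_i}, E_{w_i})$ indexed by the edges at $v$. Define $\tR$ by
\[
V(\tR) = \{v\}\cup \bigcup_i V_{w_i}, \qquad E(\tR) = \{e_1,\dots,e_k\}\cup \bigcup_i E_{w_i},
\]
so that $\tR$ is $v$ glued to the ``downstream'' components of its outgoing edges. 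Since $\tG$ is a tree, there are no oriented cycles, so condition (b) in the definition of dynamical region is automatic; and $\tR$ is manifestly connected with at least one edge.

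The substance of the argument is the verification of condition (a). A direct analysis gives
\[
\partial_{\tG}\tR \;=\; \{v\} \cup \bigl(\partial \tM \cap V(\tR)\bigr).
\]
For a vertex $u \in V(\tR)\setminus\{v\}$ that is non-boundary in $\tM$, all edges of $\tG$ at $u$ lie in $\tM$; moreover, using that $\tG$ is a tree (hence has no parallel edges), any edge at $u$ incident to $v$ would force $u = w_j$ for some $j$, so all edges at $u$ lie in $\tR$, whence $u \notin \partial \tR$. For $u \in \partial \tM \cap V(\tR)$ the same bookkeeping shows that the edges of $u$ in $\tR$ coincide with those in $\tM$, and symmetrically the edges of $u$ in $\Comp{\tR}{\tG}$ coincide with those in $\Comp{\tM}{\tG}$; the stability conditions on $u$ therefore transfer verbatim from the dynamical region property of $\tM$. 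For the vertex $v$ itself: it is unstable in $\tG$ by choice, a pure source in $\tR$ and thus stable there, and a pure target in $\Comp{\tR}{\tG}$ (since all of its outgoing edges in $\tG$ were swept into $\tR$), hence also stable in the complement.

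It remains to note that $\tR \subsetneq \tM$: since $v$ is unstable in $\tG$, it has at least one incoming edge in $\tG$, which lies in $\tM$ (as $v$ is non-boundary) but not in $\tR$ by construction. This contradicts the hypothesis that $\tM$ is a dynamical module. The part I expect to be most delicate is the boundary analysis of condition (a), and in particular confirming that no new instability is introduced at a boundary vertex of $\tR$; this hinges both on the tree hypothesis and on the specific choice to take \emph{all} outgoing subtrees of $v$ (so that $v$ becomes a pure source in $\tR$ and a pure target in the complement). A symmetric proof using incoming edges would work equally well.
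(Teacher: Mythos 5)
Your proof is correct and follows essentially the same route as the paper's: assume an interior unstable vertex $v$, split the tree at $v$ according to edge direction, and exhibit one side as a proper dynamical region inside $\tM$, contradicting minimality (the paper takes the component containing the incoming edges after deleting the outgoing ones, which is just the mirror image of your choice). Your extra bookkeeping verifying condition (a) at the old boundary vertices of $\tM$ within the ambient tree is a slightly more careful rendering of the same argument, not a different approach.
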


\begin{proof}
Let $\tT$ be a directed tree, and $\tM\leq \tT$ a dynamical module. 
Assume, by contradiction, that $\tM$ is not stable. Then, there is at least an unstable vertex $v$ not in the boundary of $\tM$. Since $v$ is an unstable vertex in $\tM$ there are some edges, say $e_1, \dots ,e_h \in E(\tM)$, such that~$s(e_i)=v$, and some edges, say $f_1, \dots, f_k \in E(\tM)$, such that~$t(f_j)=v$, for $k,h\geq 1$. 

Since $\tT$ is a directed tree, so is $\tM$. In particular, deleting the edges $e_1,...,e_h$ disconnects $\tM$. Denote by $\tR$ the connected component containing the edges $f_1,...,f_k$.
We argue that $\tR$ is a dynamical region of $\tM$, and this yields the desired contradiction.
First, $\tR$ being a subgraph of $\tM \leq \tT$ implies that item (b) in the definition of dynamical region is automatically satisfied. 
Observe that $v$ is the unique vertex in the boundary of $\tR$ in $\tM$.
In fact, given $w\neq v$ in $V(\tR)$ all edges of $\tM$ incident in $w$ are also edges of $\tR$. Otherwise, one of the $e_i$'s would be incident to $w$ and we would have a closed loop in (the geometric realisation of) $\tT$, e.g.~by taking an (unoriented) path from $v$ to $w$ in $\tR$ and then adding $e_i$. 
Since $v$ is unstable in $\tM$ and is, by construction, stable both in $\tR$ and in $C_{\tM}(\tR)$, the statement follows.  
\end{proof}

Stable dynamical regions are alternating digraphs.
Therefore, the multipath complex of a stable dynamical region $\tR$ in a directed graph $\tG$ is the matching complex of the underlying
undirected graph of $\tR$ -- cf.~\cite[Lemma~4.8]{jason}. As a consequence of this fact, we get the following:

\begin{prop}\label{prop:multipathmatchingtree}
    Let $\tT$ be a connected directed tree. Then, $X(\tT)$ is homotopy equivalent to a join of matching complexes of (undirected) trees.
\end{prop}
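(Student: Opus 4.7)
The plan is to induct on the number of unstable vertices of $\tT$, proving the stronger statement that $X(\tT)$ is in fact simplicially isomorphic to a join of matching complexes of undirected trees. In the base case $\tT$ has no unstable vertex, i.e.~$\tT$ is alternating; here $X(\tT)$ coincides with the matching complex of the underlying undirected tree $\iota(\tT)$ by the fact cited immediately before the statement of the proposition (that the multipath complex of a stable dynamical region equals the matching complex of its underlying undirected graph), so there is nothing more to prove.

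For the inductive step, pick any unstable vertex $v\in V(\tT)$. Since $\tT$ is a tree, removing $v$ yields a disjoint union of subtrees, each reattached to $v$ by a single edge. Let $\tT_1$ be the directed subtree of $\tT$ consisting of $v$ together with those subtrees whose attaching edge has $v$ as source, together with those attaching edges; let $\tT_2$ be defined symmetrically using attaching edges for which $v$ is the target. Then $E(\tT)=E(\tT_1)\sqcup E(\tT_2)$, $V(\tT_1)\cap V(\tT_2)=\{v\}$, and in both $\tT_i$ the vertex $v$ is stable (source-only in $\tT_1$, target-only in $\tT_2$). Every unstable vertex of $\tT$ different from $v$ lies in a unique $V(\tT_i)$, and passing to a subgraph can only make a vertex ``more stable'', so each $\tT_i$ has strictly fewer unstable vertices than $\tT$ and the inductive hypothesis applies.

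The heart of the argument is the simplicial isomorphism $X(\tT)\cong X(\tT_1)\ast X(\tT_2)$. Because $\tT$ is acyclic, a spanning subquiver $\sigma$ of $\tT$ is a multipath precisely when the in-degree and out-degree of $\sigma$ at every vertex are both at most one. Given a multipath $\sigma$ of $\tT$, set $\sigma_i\coloneqq \sigma\cap E(\tT_i)$; for $w\neq v$ every edge of $\tT$ incident to $w$ lies in a single $\tT_i$, so the degree bounds transfer directly, while at $v$ the edges of $\sigma_1$ at $v$ are all outgoing and those of $\sigma_2$ all incoming, which bounds the out-degree of $\sigma_1$ at $v$ and the in-degree of $\sigma_2$ at $v$ by those of $\sigma$. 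Conversely, for any pair $(\sigma_1,\sigma_2)$ of multipaths the union $\sigma_1\cup\sigma_2$ is a multipath of $\tT$, since its in-degree at $v$ comes entirely from $\sigma_2$ and its out-degree at $v$ entirely from $\sigma_1$, and degrees elsewhere are unchanged. This bijection on multipaths respects the edge-to-vertex correspondence defining the simplicial complexes, yielding the claimed join isomorphism. The delicate point, which I expect to be the main obstacle, is precisely this local analysis at $v$; once in place, iterating the construction expresses $X(\tT)$ as a join of matching complexes of the underlying undirected trees of the resulting alternating subquivers, as desired.
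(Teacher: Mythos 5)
Your proof is correct, and it proves the same strengthened statement (simplicial isomorphism, not merely homotopy equivalence) as the paper, but by a genuinely different and more self-contained route. The paper invokes the decomposition theorem of \cite[Theorem~1]{jason}, which asserts that any digraph decomposes uniquely into \emph{dynamical modules} with a corresponding join decomposition of multipath complexes, and then pairs this with Lemma~\ref{lemma:stabledynmod}, which shows that every dynamical module of a directed tree is stable (hence alternating). Your argument instead inducts on the number of unstable vertices, splitting at a chosen unstable vertex $v$ into the ``outgoing'' subtree $\tT_1$ and ``incoming'' subtree $\tT_2$ and directly verifying the join isomorphism $X(\tT)\cong X(\tT_1)\ast X(\tT_2)$ via the in/out-degree characterization of multipaths in an acyclic digraph. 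Interestingly, the paper's own proof of Lemma~\ref{lemma:stabledynmod} performs the same local surgery at an unstable vertex, but only to derive a contradiction inside the dynamical-module framework; you instead use that surgery as the engine of the decomposition itself. Your route avoids the external machinery of dynamical regions/modules entirely, at the modest cost of not exhibiting the canonical (order-independent) decomposition that \cite{jason} provides. One small point worth making explicit: in the base case you need $\tT$ with no unstable vertex to be alternating, which holds because a vertex is stable exactly when it is not in $s(E(\tT))\cap t(E(\tT))$; the degenerate edgeless case is vacuously covered. Both proofs ultimately produce the same collection of alternating subtrees, so the results agree; only the paths to them differ.
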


\begin{proof}
    By \cite[Theorem~1]{jason}, there is a unique (up to re-ordering) decomposition of $\tG$ into dynamical modules $\tM_1,...,\tM_{k}$, and we have an isomorphism
 \[X(\tG) \cong X(\tM_1) \ast \cdots \ast X(\tM_k) \ .\]
 By Lemma~\ref{lemma:stabledynmod}, each $\tM_i$ is stable, hence each $X(\tM_i)$ is the matching complex of the underlying undirected tree of $\tM_i$. The statement follows.
\end{proof}

A straightforward corollary follows, strengthening some of the results contained in \cite{jason}.

\begin{cor}\label{cor:forestscontrspheres}
    The multipath complex of a forest is either contractible or homotopy equivalent to a wedge of spheres. 
 \end{cor}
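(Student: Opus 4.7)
The plan is to deduce the corollary by combining Proposition~\ref{prop:multipathmatchingtree} with the monoidal property of $X$ from equation~\eqref{eq:joinmulti} and the known homotopy type of matching complexes of trees. A forest $\tF$ decomposes as a disjoint union of connected directed trees $\tT_1,\dots,\tT_n$. First, I would invoke equation~\eqref{eq:joinmulti} iteratively to write
\[
X(\tF) \;\cong\; X(\tT_1)\ast X(\tT_2)\ast\cdots\ast X(\tT_n).
\]
Then, by Proposition~\ref{prop:multipathmatchingtree}, each factor $X(\tT_i)$ is homotopy equivalent to a join of matching complexes of undirected trees; hence, $X(\tF)$ itself is homotopy equivalent to a (possibly larger) join of matching complexes of undirected trees.

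Second, I would apply the result of Marietti--Testa \cite[Theorem~4.13]{MR2426164}, already cited above in the discussion preceding Proposition~\ref{prop:multipathmatchingtree}, to conclude that each matching complex appearing as a join factor is either contractible or homotopy equivalent to a wedge of spheres.

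Third, I would use the standard fact that the join behaves well with respect to wedges of spheres and contractibility: joining with a contractible space produces a contractible space (the join with a point is the cone), while the join of a wedge of $m$-spheres with a wedge of $n$-spheres is homotopy equivalent to a wedge of $(m+n+1)$-spheres, as follows from the homotopy equivalence $A\ast B\simeq \Sigma(A\wedge B)$ for well-pointed CW complexes. Thus, by a straightforward induction on the number of join factors, $X(\tF)$ is either contractible (as soon as one factor is) or a wedge of spheres (if every factor is).

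I do not anticipate a genuine obstacle here: the corollary is essentially a formal consequence of Proposition~\ref{prop:multipathmatchingtree}, the symmetric monoidality of $X$ with respect to disjoint union and join, and the known homotopy classification of matching complexes of trees. The only care needed is to keep track of the contractible case correctly, since a single contractible factor in a join forces the total space to be contractible and thereby collapses the spherical conclusion; but this is precisely the dichotomy stated in the corollary.
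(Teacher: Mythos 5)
Your argument is correct and follows essentially the same route the paper intends: the paper states the corollary as an immediate consequence of decomposing the forest via Equation~\eqref{eq:joinmulti}, applying Proposition~\ref{prop:multipathmatchingtree} to each tree component, and invoking \cite[Theorem~4.13]{MR2426164} together with the standard behaviour of joins of wedges of spheres. Your extra care about the contractible factor collapsing the join is exactly the dichotomy the paper has in mind, so there is nothing to add.
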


Hence, a direct consequence of Proposition~\ref{prop:multipathmatchingtree} is that the homology of multipath complexes of directed trees does not contain any torsion. In order to get interesting torsion, one needs to consider non-acyclic quivers.  However, when the quivers contain directed cycles, computations of the homology groups are more involved -- and often computationally exponential. Therefore, it would be interesting to know more about the global  growth of the ranks of the homology groups. By applying similar arguments as in \cite[Proposition~4.3]{proudfoot2022contraction}, the ranks have a polynomial growth, and they could be packed into a Hilbert series.
Let us recall the definition of Hilbert series. 
Denote by ${\bf C}$ a(n essentially) small category. A~\emph{norm} on ${\bf C}$ is a function $\nu$ from the objects of ${\bf C}$ modulo isomorphism to $\bN$. 

\begin{defn} 
Given a finitely generated ${\bf C}$-module (over $R$) $\mathcal{F}$ and a norm $\nu$ on ${\bf C}$, the \emph{Hilbert series} (with respect to $\mathcal{F}$ and $\nu$) is defined as
\[ {S}_{\mathcal{F},\nu}(t) = \sum_{x\in {\bf C}} \text{rank}_R(\mathcal{F}(x))t^{\nu(x)}\ . \]
If ${\bf C}$ is a category of graphs we will take $\nu$ to be the number of edges, and omit it from the notation. 
\end{defn}

{We know from \cite{Hilbert_series} that the Hilbert series of matching complexes of undirected trees is algebraic. 
By Proposition~\ref{prop:multipathmatchingtree}, multipath complexes of directed trees decompose, up to isomorphism of simplicial complexes, as join products of matching complexes of trees. As a consequence, we get that their homology over a field can be expressed in terms of the K\"unneth formula. This might suggest that, in complete analogy to what happens with matching complexes, also the Hilbert series of multipath complexes is algebraic.  }
\begin{q}
    {Denote by ${\bf T}_k$ the subcategory of $\CDigraph$ spanned by trees with at most $k$ unstable vertices. Is the Hilbert series with respect to $\H_i(X_{-};\bZ)\colon {\bf T}_k \to R\text{-}{\bf Mod}$ algebraic for each choice of $i,k\in \bN$?}
\end{q}

\subsection{Multipath complexes and matching complexes}
In the previous section we used the observation, from~\cite{monotonecohm22}, that the multipath complexes of trees are strictly related to matching complexes. The aim of this section is to extend the relation between multipath complexes and matching complexes to a wider class of quivers, and, as a consequence, to generalise  Proposition~\ref{prop:multipathmatchingtree}. We will make use of a blow-up operation. To avoid further technical assumptions,  we  work also in this section with digraphs rather than with quivers.

Recall that the \emph{indegree} (resp. \emph{outdegree}) of a vertex $v$ is the number of edges with target (resp.~source)~$v$.

\begin{defn}\label{def:blowup}
Let $\tG$ be a digraph, and let $v\in V(\tG)$ with both indegree and outdegree different from $0$. The \emph{blow-up} of $\tG$ at $v$ is the digraph $B(\tG,v)$ obtained from $\tG$ as follows: the vertices of $B(\tG,v)$ are the same as $\tG$ except for $v$, which is replaced by $v_{\mathrm{in}}$ and $v_{\mathrm{out}}$. Given $v',v''\in V(B(\tG,v))$ the edges from $v'$ to $v''$ are in bijection with 
\begin{enumerate}[label = (\alph*)]
\item the edges between the corresponding vertices in $\tG$, if $\{v',v''\}\cap \{v_{\mathrm{in}},v_{\mathrm{out}}\} = \emptyset$;
\item the edges from $v'$ to $v$, if $v'\neq v_{\mathrm{out}}$ and $v''=v_{\mathrm{in}}$;
\item the edges from $v$ to $v''$, if $v'=v_{\mathrm{out}}$ and $v''\neq v_{\mathrm{in}}$;
\item the empty set, in the remaining cases.
\end{enumerate}
If the indegree or the outdegree of $v$ are zero, then we set $B(\tG,v) \coloneqq \tG$.
The \emph{blow-up} of $\tG$ is the digraph $B(\tG)$ obtained from $\tG$ by iteratively blowing-up all vertices in $\tG$ one after the other; see Figure~\ref{fig:blowup} for an illustrative example.
\end{defn}

Note that the blow-up of digraphs does not depend on the order we blow-up the vertices. Furthermore, blowing-up a vertex $v$ yield a pair of vertices $v_{\mathrm{in}}$ and $v_{\mathrm{out}}$. The blow-up of either $v_{\mathrm{in}}$ or $v_{\mathrm{out}}$ yields just the identity.
Therefore, the blow-up is a trivial operation on the class of alternating digraphs.

\begin{rem}\label{rem:blowup_is_alternating}
    Blow-ups of digraphs are alternating digraphs, and, in particular, bipartite.
\end{rem}

Note that blow-ups of digraphs might yield disconnected digraphs. In fact, it is not difficult to check that the connected components of the blow-up $B(\tG)$ of a digraph $\tG$ without oriented cycles are in one-to-one correspondence with the dynamical modules of $\tG$.

\begin{example}
    The blow-up of $\tI_n$ is the disconnected digraph $\bigsqcup_{i=1}^n\tI_1$. While the blow-up of $\tA_n$ is $\tA_n$.
\end{example}

Locally, blowing-up a vertex corresponds to blowing-up a dandelion subdigraph at its central vertex:

\begin{example}
   Consider the dandelion 
$\tD_{n,m}$ on $(n+m+1)$ vertices,  and $(m+n)$ edges, defined as follows:
\begin{enumerate}
    \item $V(\tD_{n,m}) = \{ v_{0}, w_{1} ,...., w_{n}, x_{1}, ..., x_{m} \}$;
    \item $E(\tD_{n,m}) = \{ (w_i,v_0), (v_{0},x_j) \mid i=1,...,n; j=1,...,m \}$.
\end{enumerate}
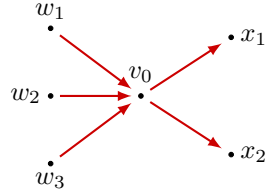
\begin{figure}[h]
	\begin{tikzpicture}[scale=0.6][baseline=(current bounding box.center)]
		\tikzstyle{point}=[circle,thick,draw=black,fill=black,inner sep=0pt,minimum width=2pt,minimum height=2pt]
		\tikzstyle{arc}=[shorten >= 8pt,shorten <= 8pt,->, thick]
		
		\node (v0) at (0,0) {};
		\node[above] at (0,0.1) {$v_0$};
		\draw[fill] (v0)  circle (.05);
		\node (w1) at (-2,1.5) {};
		\node[above] at (-2,1.5) {$w_1$};
		\draw[fill] (-2,1.5)  circle (.05);
		\node  (w2) at (-2,0) { };
		\node[left]  at (-2,0) {$w_{2}$};
		\draw[fill] (-2,0)  circle (.05);
		\node  (w3) at (-2,-1.5) { };
		\node[below]  at (-2,-1.5) {$w_{3}$};
		\draw[fill] (-2,-1.5)  circle (.05);
		
		\node  (x1) at (2,1.3) { };
		\node[right]  at (2,1.3) {$x_1$};
		\draw[fill] (2,1.3)  circle (.05);
		
		\node  (x2) at (2,-1.3) { };
		\node[right]  at (2,-1.3) {$x_2$};
		\draw[fill] (2,-1.3)  circle (.05);

		\draw[thick, bunired, -latex] (w1) -- (v0);
		\draw[thick, bunired, -latex] (w2) -- (v0);
		\draw[thick, bunired, -latex] (w3) -- (v0);
		\draw[thick, bunired, -latex] (v0) -- (x1);
		\draw[thick, bunired, -latex] (v0) -- (x2);
	\end{tikzpicture}
	\caption{The  graph $\tD_{3,2}$.}
	\label{fig:nmgraph}
\end{figure}
In other words we have a single $(m+n)$-valent vertex $v_0$, all remaining vertices are univalent, there are $n$ edges with target $v_0$, and there are $m$ edges with source $v_0$ -- cf.~Figure~\ref{fig:nmgraph}. Then, the blow-up of $\tD_{n,m}$ is the blow-up~$B(\tD_{n,m},v_0)$ of  $\tD_{n,m}$ at the central vertex $v_0$, which is the disjoint union of a source with $m$ edges, and a sink with $n$ edges. Note that maximal multipaths of $\tD_{n,m}$ are of type $\{(w_i,v_0),(v_0,x_j)\}$, whereas  maximal  multipaths of $B(\tD_{n,m},v_0)$ are of type $\{\{(w_i,v_0)\},\{(v_0,x_j)\}\}$. 
\end{example}

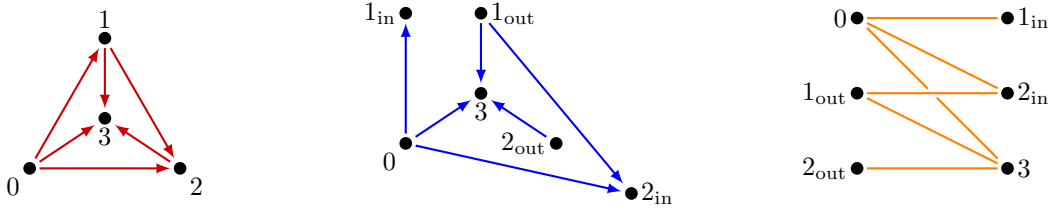
\begin{figure}[h]
    \centering
    \begin{tikzpicture}[scale = 2, thick]
    %%%% Triangle
    \node (a) at (0,0) {};
    \node (b) at (.5,.866) {};
    \node (c) at (1,0) {};
    \node (d) at (.5,.333) {};
    
    \node at (0,0) [below left] {$ 0$};
    \node at (.5,.866) [above] {$ 1$};
    \node at (1,0) [below right] {$ 2$};
    \node at (.5,.333) [below] {$3$};

    \draw[black, fill] (a) circle (.035);
    \draw[black, fill] (b) circle (.035);
    \draw[black, fill] (c) circle (.035);
    \draw[black, fill] (d) circle (.035);

    \draw[-latex, bunired] (a) -- (b);
    \draw[-latex, bunired] (a) -- (c);
    \draw[-latex, bunired] (b) -- (c);
    \draw[-latex, bunired] (a) -- (d);
    \draw[-latex, bunired] (b) -- (d);
    \draw[-latex, bunired] (c) -- (d);
    
    \begin{scope}[shift = {+(2.5,0.166)}]

    \node (a) at (0,0) {};
    \node (bout) at (.5,.866) {};
    \node (bin) at (0,.866) {};
    \node (cout) at (1,0) {};
    \node (cin) at (1.5,-.333) {};
    \node (d) at (.5,.333) {};
    
    \node at (0,0) [below left] {$ 0$};
    \node at (.5,.866) [right] {$1_{\mathrm{out}}$};
    \node at (1,0) [left] {$2_{\mathrm{out}}$};
    \node at (0,.866) [left] {$1_{\mathrm{in}}$};
    \node at (1.5,-.333) [ right] {$2_{\mathrm{in}}$};
    \node at (.5,.333) [below] {$3$};

    \draw[black, fill] (a) circle (.035);
    \draw[black, fill] (bin) circle (.035);
    \draw[black, fill] (cin) circle (.035);
    \draw[black, fill] (bout) circle (.035);
    \draw[black, fill] (cout) circle (.035);
    \draw[black, fill] (d) circle (.035);

    \draw[-latex, blue] (a) -- (bin);
    \draw[-latex, blue] (a) -- (cin);
    \draw[-latex, blue] (bout) -- (cin);
    \draw[-latex, blue] (a) -- (d);
    \draw[-latex, blue] (bout) -- (d);
    \draw[-latex, blue] (cout) -- (d);
    \end{scope}

        \begin{scope}[shift = {+(5.5,0)}]

    \node (a) at (0,1) {};
    \node (b) at (0,0.5) {};
    \node (c) at (0,0) {};
    \node (ap) at (1,1) {};
    \node (bp) at (1,0.5) {};
    \node (cp) at (1,0) {};
    
    \node at (0,1) [left] {$ 0$};
    \node at (0,0.5) [left] {$1_{\mathrm{out}}$};
    \node at (0,0) [left] {$2_{\mathrm{out}}$};
    \node at (1,1) [right] {$1_{\mathrm{in}}$};
    \node at (1,0.5) [right] {$2_{\mathrm{in}}$};
    \node at (1,0) [right] {$3$};

    \draw[black, fill] (a) circle (.035);
    \draw[black, fill] (b) circle (.035);
    \draw[black, fill] (c) circle (.035);
    \draw[black, fill] (ap) circle (.035);
    \draw[black, fill] (bp) circle (.035);
    \draw[black, fill] (cp) circle (.035);

    \draw[orange] (a) -- (ap);
    \draw[orange] (a) -- (bp);
    \draw[orange] (a) -- (cp);
        \draw[white, fill] (0.5,0.5) circle (.035);
    \draw[orange] (b) -- (bp);
    \draw[orange] (b) -- (cp);
    \draw[orange] (c) -- (cp);
    \end{scope}
    
\end{tikzpicture}
\caption{Transitive tournament $\tT_3$ (in red on the left), its blow-up (in blue at the center), and the associated bipartite graph $\tB_3$ (in orange on the right).}
\label{fig:blowup}
\end{figure}

\begin{prop}\label{prop:singleblup}
Let $\tG$ be a digraph. 
Suppose that $v\in V(\tG)$ does not belong to any oriented cycle in $\tG$. Then, the multipath complex $X(\tG)$ is canonically isomorphic, as a simplicial complex, to $X(B(\tG,v))$.
\end{prop}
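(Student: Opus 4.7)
The plan is to construct a canonical bijection of edge sets that extends to an isomorphism of path posets, and thus to a simplicial isomorphism of multipath complexes. First I would dispose of the degenerate case: if the indegree or outdegree of $v$ is zero then $B(\tG,v)=\tG$ by definition, so assume both are positive. By the blow-up construction there is then a canonical bijection $\phi\colon E(\tG)\to E(B(\tG,v))$ sending each edge to the edge of $B(\tG,v)$ with the same endpoints, except that edges into $v$ are redirected to $v_{\mathrm{in}}$ and edges out of $v$ are redirected from $v_{\mathrm{out}}$. This extends to an inclusion-preserving bijection on subsets of edges, and since the multipath complex is determined by its face poset, it suffices to show that $\phi$ restricts to a bijection between multipaths.

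For the forward direction, if $S\subseteq E(\tG)$ is a multipath then at $v$ there is at most one incoming and at most one outgoing edge of $S$; if both occur then $v$ is an interior vertex of a simple-path component. In $\phi(S)$ the former incoming edge arrives at $v_{\mathrm{in}}$ and the former outgoing edge departs from $v_{\mathrm{out}}$, and as $B(\tG,v)$ has no edge between $v_{\mathrm{in}}$ and $v_{\mathrm{out}}$, the simple path through $v$ splits into two simple paths, with all other components unaffected. Hence $\phi(S)$ is a multipath.

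The backward direction is where the hypothesis on $v$ is essential, and it is the main obstacle. Given a multipath $\phi(S)$ in $B(\tG,v)$, the degree constraints at $v_{\mathrm{in}}$ (which only receives edges) and at $v_{\mathrm{out}}$ (which only sends edges) translate under $\phi^{-1}$ into indegree and outdegree at most one at $v$ in $S$; the same holds at all other vertices by the bijection. Thus every connected component of $S$ is either a simple path or an oriented cycle. The hypothesis rules out oriented cycles through $v$, while any oriented cycle of $S$ avoiding $v$ would appear verbatim in $\phi(S)$, contradicting its multipath status. Without the non-cycle assumption on $v$, the blow-up could break a cycle through $v$ into a valid simple path, producing spurious multipaths of $B(\tG,v)$ with no counterpart in $\tG$.

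Combining the two directions, $\phi$ yields a canonical order isomorphism of path posets, which induces the desired canonical isomorphism $X(\tG)\cong X(B(\tG,v))$ of simplicial complexes.
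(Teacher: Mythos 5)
Your proposal is correct and follows essentially the same route as the paper: the same canonical edge bijection, the same splitting argument for the forward direction, and the same use of the no-oriented-cycle hypothesis at $v$ to rule out the problematic case in the backward direction (your degree-counting reformulation in $\tG$, versus the paper's component analysis in $B(\tG,v)$, is only a cosmetic difference in presentation).
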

\begin{proof}
By definition, there is a natural bijection $b\colon E(\tG) \to E(B(\tG,v))$ given by
\[ b((v_1,v_2)) = \begin{cases}
(v_1, v_{\mathrm{in}}) & \text{if }v_2 = v ,\\
(v_{\mathrm{out}}, v_{2}) & \text{if }v_1 = v ,\\
(v_1,v_2)  & \text{otherwise}
\end{cases}\]
for each $(v_1,v_2)\in E(\tG)$.
Given a multipath $\tM$ in $\tG$, we define $b(\tM)$ as the spanning subgraph of $B(\tG,v)$ spanned by the edges in $b(E(\tM))\subseteq E(B(\tG,v))$.
To prove the statement is sufficient to show that $b$ gives a well-defined bijection between multipaths in $\tG$ and multipaths in $B(\tG,v)$.

The image of a multipath in $\tG$ is a multipath in $B(\tG,v)$;
if at most one edge in the multipath has $v$ as endpoint, then the statement is clear. 
If two edges of the multipath have $v$ as endpoint, then the connected component of the multipath containing these two edges splits into the disjoint union of two paths (one containing $v_{\mathrm{in}}$ and the other containing $v_{\mathrm{out}}$). 

We claim that every multipath $\tM'$ in $B(\tG,v)$ is the image of a multipath in $\tG$. 
To this end we show that there is at most one edge in each connected component of $\tM'$ that has one endpoint in $\{ v_{\mathrm{in}}, v_{\mathrm{out}} \}$. 
Assume, by contradiction, that there are two edges of the multipath, say $e_1$ and $e_2$, incident in $v_{\mathrm{out}}$ and $v_{\mathrm{in}}$, respectively. 
If $e_1$ and $e_2$ belong to the same component $\gamma$ of $\tM'$ we get a contradiction. 
By construction, there is no edge $e$ in $B(\tG,v)$ such that $t(e) = v_{\mathrm{out}}$ or $s(e) = v_{\mathrm{in}}$.
Thus, $e_1$ is the first edge in $\gamma$ and $e_2$ the last edge in $\gamma$.
Therefore, $b^{-1}(\gamma)$ is an oriented cycle in $\tG$ which has $v$ as a vertex. This is absurd since $v$ does not belong to any oriented cycle in $\tG$. 
It follows that $b^{-1}(\tM')$ is a well-defined multipath, and the claim follows. 

Since $b$ is a bijection between the edges of the two digraphs, the induced map on multipaths is injective. This concludes the proof.
\end{proof}

We are now ready to state the main technical result of the section. Recall by Remark~\ref{rem:adjunctiongraphs} that the functor $\iota$ associates to a quiver the underlying undirected graph. 

\begin{thm}\label{thm:multi=match}
Let $\tG$ be a digraph without oriented cycles. Then, the multipath complex~$X(\tG)$ is isomorphic, as simplicial complex, to the matching complex of $\iota(B(\tG))$. 
\end{thm}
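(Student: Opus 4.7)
The plan is to combine iterative applications of Proposition~\ref{prop:singleblup} with the observation that for an alternating digraph, the multipath complex coincides with the matching complex of the underlying undirected graph (compare~\cite[Lemma~4.8]{jason}).

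First, I would iterate Proposition~\ref{prop:singleblup}. Since $\tG$ has no oriented cycles, the hypothesis of the proposition trivially holds for any vertex $v\in V(\tG)$, so $X(\tG)\cong X(B(\tG,v_1))$ for an arbitrary choice $v_1$. The key point is that $B(\tG,v_1)$ still has no oriented cycles: the vertex $v_{1,\mathrm{in}}$ has no outgoing edges and $v_{1,\mathrm{out}}$ has no incoming edges, so neither can lie on an oriented cycle; and any oriented cycle in $B(\tG,v_1)$ avoiding both would pull back to an oriented cycle in $\tG$ under the map that re-identifies $v_{1,\mathrm{in}}$ with $v_{1,\mathrm{out}}$. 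Thus the proposition applies again to any remaining vertex, and, because the blow-up does not depend on the order of the vertices being blown up, we can iterate until every original vertex of $\tG$ has been blown up, obtaining $X(\tG)\cong X(B(\tG))$.

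Next, by Remark~\ref{rem:blowup_is_alternating}, $B(\tG)$ is alternating, i.e.~the sets $s(E(B(\tG)))$ and $t(E(B(\tG)))$ are disjoint. This forces every simple path in $B(\tG)$ to have length at most one: in a simple path $e_1,\dots,e_n$ with $n\geq 2$, the vertex $t(e_1)=s(e_2)$ would be simultaneously a source and a target, contradicting the alternating property. Therefore a multipath in $B(\tG)$ is precisely a spanning subquiver whose connected components are isolated vertices or single edges, with no two of these edges sharing a vertex. Under the tautological identification of $E(B(\tG))$ with the set of undirected edges of $\iota(B(\tG))$, this is exactly a matching of $\iota(B(\tG))$. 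Face relations are clearly preserved, so $X(B(\tG))$ and the matching complex of $\iota(B(\tG))$ are isomorphic as simplicial complexes. Combining the two isomorphisms completes the proof.

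The only genuine obstacle is the induction in the first step: one must confirm that the class of digraphs without oriented cycles is closed under the blow-up operation at a single vertex, so that Proposition~\ref{prop:singleblup} can be applied repeatedly. As sketched above, this follows from the observation that the two new vertices created at each stage are either ``pure sources'' or ``pure sinks'' in the resulting digraph, and hence cannot participate in any oriented cycle.
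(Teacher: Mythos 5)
Your proposal is correct and follows essentially the same route as the paper: reduce to $X(B(\tG))$ by iterating Proposition~\ref{prop:singleblup}, then identify the multipath complex of the alternating digraph $B(\tG)$ with the matching complex of $\iota(B(\tG))$. The only differences are in the level of detail: you verify explicitly that acyclicity is preserved under a single-vertex blow-up (a point the paper leaves implicit when passing from Proposition~\ref{prop:singleblup} to the full blow-up), and you prove the alternating-case identification directly, whereas the paper cites \cite[Theorem~4.1]{monotonecohm22} on each connected component and assembles them via the join decomposition~\eqref{eq:joinmulti}.
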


\begin{proof}
Since no vertex of $\tG$ is contained in any oriented cycle, by Proposition~\ref{prop:singleblup}, $X(\tG)$ is canonically isomorphic to~$X(B(\tG))$. 
The digraph $B(\tG)$, and, in particular, each connected component of $B(\tG)$, is alternating -- cf.~Remark~\ref{rem:blowup_is_alternating}. Hence, by \cite[Theorem~4.1]{monotonecohm22}, for each connected component $\tH$ of $B(\tG)$, we have an isomorphism
of simplicial complexes
$M (\iota(\tH)) \cong X(\tH)$. Let $\tH_1,\dots,\tH_n$ be the connected components of $B(\tG)$. Then, by Equation~\eqref{eq:joinmulti}, we have
\[
X(\tG)\equiv  X(\tH_1)\ast\dots \ast X(\tH_n)\cong M(\iota(\tH_1))\ast\dots \ast M(\iota(\tH_n))\ ,
\]
where $M$ denotes the matching complex. 
As also the matching complex of a disjoint union is, up to isomorphism, the join product of the matching complexes, this fact implies the statement.
\end{proof}

Let $\Digraph_o$ be the category of directed graph without oriented cycles, and \emph{regular} morphisms of directed graphs; that means, injective maps $\phi\colon \tG_1\to\tG_2$ of digraphs  such that  $(v,w) \in E(\tG_1) \implies \phi(v)\neq \phi(w)$. {Note, that this is the same as the opposite category of digraphs without cycles, and deletions.}

\begin{rem}\label{rem:functblowup}
    The blow-up construction lifts to a functor 
    \[
    \mathbf{B} \coloneqq B\circ \iota \colon \Digraph_o\to \Graph^\op \ .
    \]
    In fact, it is easy to see that the blow-up  extends to regular morphims of digraphs, and preserves compositions and identities. Note that we do not allow contractions here because contractions may generate new oriented cycles. 
\end{rem}

\begin{cor}\label{cor:commdiagram}
    The following diagram
    \begin{center}
	\begin{tikzcd}
		\Digraph_o\arrow[r,"\mathbf{B}"]\arrow[dr,"X"']  & \Graph^\op\arrow[d,"M"]\\
		&  \mathbf{SimpCompl}
	\end{tikzcd}
 \end{center}
    is commutative up to isomorphism of simplicial complexes.
\end{cor}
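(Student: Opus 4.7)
The corollary is the functorial upgrade of Theorem~\ref{thm:multi=match}, which already supplies for each $\tG\in\Digraph_o$ a canonical isomorphism $\eta_{\tG}\colon X(\tG)\xrightarrow{\cong} M(\mathbf{B}(\tG))$. The plan is to check that the family $\{\eta_\tG\}_\tG$ assembles into a natural isomorphism $\eta\colon X\Rightarrow M\circ\mathbf{B}$ of covariant functors $\Digraph_o\to\mathbf{SimpCompl}$, from which the asserted $2$-commutativity of the diagram follows. I would first unpack $\eta_\tG$ as the composition of (i) the isomorphism $X(\tG)\cong X(B(\tG))$ induced by iterating the edge bijection $b\colon E(\tG)\to E(B(\tG))$ of Proposition~\ref{prop:singleblup} over all vertices of $\tG$, and (ii) the isomorphism $X(B(\tG))\cong M(\iota(B(\tG)))=M(\mathbf{B}(\tG))$ obtained from Remark~\ref{rem:blowup_is_alternating}, the join decomposition~\eqref{eq:joinmulti} across the connected components of $B(\tG)$, and the matching/multipath identification for alternating digraphs supplied by \cite[Theorem~4.1]{monotonecohm22}.

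For naturality, I would take a regular morphism $\phi\colon \tG_1\to\tG_2$ in $\Digraph_o$ and trace its action at the level of edges, since both $X(\phi)$ and $M(\mathbf{B}(\phi))$ are determined on simplices by their action on edge sets. The required naturality square then reduces to the commutativity of
\[
\begin{tikzcd}
E(\tG_1) \arrow[r,"b"] \arrow[d,hook,"\phi_E"'] & E(\mathbf{B}(\tG_1)) \arrow[d,hook] \\
E(\tG_2) \arrow[r,"b"] & E(\mathbf{B}(\tG_2))
\end{tikzcd}
\]
whose right vertical is the edge-set injection underlying the minor morphism $\mathbf{B}(\phi)\colon \mathbf{B}(\tG_2)\to\mathbf{B}(\tG_1)$ in $\Graph$. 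This square commutes because the bijection $b$ is defined purely locally at each vertex of the source digraph, and the matching/multipath correspondence of \cite[Theorem~4.1]{monotonecohm22} is tautologically natural on edges.

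The main obstacle, which is really a mild bookkeeping matter, is that a vertex $v\in V(\tG_1)$ may be stable in $\tG_1$ (and hence left unsplit by $B$) while $\phi(v)$ is unstable in $\tG_2$ (and hence split as $\phi(v)_{\mathrm{in}},\phi(v)_{\mathrm{out}}$ in $B(\tG_2)$), so the vertex labels on the endpoints of corresponding edges need not match literally. One checks, however, that on the edges in the image of $\phi_E$ the relabellings prescribed by $b$ on both sides agree: an endpoint $v$ of an edge $(v,w)\in E(\tG_1)$ is labelled $v_{\mathrm{out}}$ in $B(\tG_1)$ precisely when $v$ is the source of an edge and has incoming edges in $\tG_1$, and the same characterisation applies to $\phi(v)$ in $\tG_2$ along the edge $(\phi(v),\phi(w))$. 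Since $\phi$ is injective and preserves source and target maps, the \emph{in}/\emph{out} status of each endpoint of an edge is preserved, and the square above commutes. Together with the (tautological) naturality of the matching/multipath identification, this establishes the naturality of $\eta$, whence the corollary.
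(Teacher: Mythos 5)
Your proof is correct and fills in the details behind the paper's own one-line argument, which simply invokes Theorem~\ref{thm:multi=match} (for the objectwise isomorphism) and Remark~\ref{rem:functblowup} (for the functoriality of $\mathbf{B}$). You have usefully made explicit the naturality check that the paper leaves implicit, correctly reducing it to commutativity of a square on edge sets: since the vertex sets of $X(\tG)$ and $M(\mathbf{B}(\tG))$ are both identified with $E(\tG)$ (via the bijection $b$), and both $X(\phi)$ and $M(\mathbf{B}(\phi))$ act by the same edge injection, the naturality square commutes tautologically — the discrepancy in how endpoints of an edge are labelled (split or unsplit) is, as you observe, irrelevant to the simplicial structure, which records only which \emph{edges} form a multipath/matching.
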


\begin{proof}
The statement follows in view of Theorem~\ref{thm:multi=match} and Remark~\ref{rem:functblowup}.
\end{proof}

We can use the identification with the matching complex given in Theorem~\ref{thm:multi=match} to get new  computations of the homotopy type of matching complexes. In fact, let $\tB_n$ be the undirected graph on $2n$ vertices $p_1, \dots, p_n$ and $q_1, \dots, q_n$, and edges $(p_i, q_j)$ for all $i\leq j$. Sometimes, these are called  half-graphs or ladders~\cite{NESETRIL2021103223}.

\begin{thm}\label{thm:matchingBn}
    The matching complex of $\tB_n$ is either contractible or homotopy equivalent to a wedge of spheres. 
\end{thm}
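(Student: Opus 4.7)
The plan is to reduce the statement to a question about multipath complexes by means of Corollary~\ref{cor:commdiagram}. Let $\tT$ be the transitive tournament on $n+1$ vertices $\{0,1,\dots,n\}$, whose edge set consists of one directed edge $(i,j)$ for each pair $i<j$. I claim that $\iota(B(\tT))$ is canonically isomorphic to $\tB_n$. Indeed, by Definition~\ref{def:blowup} the unique source $0$ and the unique sink $n$ of $\tT$ are left untouched in the blow-up, whereas each intermediate vertex $k\in\{1,\dots,n-1\}$ is split into a pair $k_{\mathrm{in}}, k_{\mathrm{out}}$. Identifying $p_i$ with $(i-1)_{\mathrm{out}}$ (with the convention $0_{\mathrm{out}}=0$) and $q_j$ with $j_{\mathrm{in}}$ (with the convention $n_{\mathrm{in}}=n$) produces a bijection between the edges of $B(\tT)$ and those of $\tB_n$, since $(i-1,j)$ is an edge of $\tT$ exactly when $i\leq j$; the case $n=3$ is drawn in Figure~\ref{fig:blowup}. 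Because $\tT$ has no oriented cycles, Theorem~\ref{thm:multi=match} then gives an isomorphism of simplicial complexes $M(\tB_n)\cong X(\tT)$.

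With this reduction in hand, it is enough to show that $X(\tT)$ is contractible or homotopy equivalent to a wedge of spheres. I would proceed by induction on $n$. The base case $n=1$ is immediate, since $\tT$ reduces to a single edge and $X(\tT)$ is a single point. For the inductive step, I focus on the extremal edge $e=(0,n)\in E(\tT)$. Because $0$ has no incoming edges and $n$ has no outgoing edges, $e$ cannot be extended within any simple path, and hence any multipath of $\tT$ containing $e$ must have $\{e\}$ as a standalone connected component. The remaining edges of such a multipath form a multipath in the sub-tournament induced on $\{1,\dots,n-1\}$, which is itself a transitive tournament on $n-1$ vertices. Consequently, the closed star of the vertex $e$ in $X(\tT)$ is the cone over the multipath complex of this smaller tournament, and the link of $e$ coincides with the latter complex.

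Writing $X(\tT)=X(\tT\setminus e)\cup \overline{\mathrm{st}}(e)$, the usual pushout (Mayer--Vietoris) argument identifies $X(\tT)$ up to homotopy with the mapping cone of the inclusion of the link into $X(\tT\setminus e)$, the multipath complex of $\tT$ with the edge $e$ removed. The main obstacle is controlling this mapping cone: the digraph $\tT\setminus e$ is no longer a tournament and the inductive hypothesis does not apply to it directly. I would address this either by iterating the construction on further extremal edges of the form $(0,k)$ and $(k,n)$ inside $\tT\setminus e$, or more cleanly by building a global discrete Morse matching on the face poset of $X(\tT)$ that pairs multipaths according to the presence or absence of such corner edges. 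Verifying the acyclicity of this matching, and showing that the remaining critical cells all lie in a single dimension (giving a wedge of spheres) or cancel entirely (giving contractibility), is the Morse-theoretic step that I expect to be the technical heart of the proof.
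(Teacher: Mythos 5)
Your reduction step is exactly what the paper does: you identify $\tB_n$ as $\iota(B(\tT_n))$ for the transitive tournament $\tT_n$ on $n+1$ vertices, and then invoke Theorem~\ref{thm:multi=match} (via Corollary~\ref{cor:commdiagram}) to obtain $M(\tB_n)\cong X(\tT_n)$. Your description of the vertex bijection is correct and matches Figure~\ref{fig:blowup}.

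The gap is in the second half. The paper finishes at this point by citing \cite[Theorem~5.1]{jason}, which already states that the multipath complex of a transitive tournament is contractible or homotopy equivalent to a wedge of spheres; no further argument is needed. You instead try to reprove this fact from scratch. Your decomposition $X(\tT_n)=X(\tT_n\setminus e)\cup\overline{\mathrm{st}}(e)$ along the corner edge $e=(0,n)$ is a sensible start, and your identification of $\mathrm{lk}(e)$ with the multipath complex of the sub-tournament on $\{1,\dots,n-1\}$ is correct (since $e$ must sit as an isolated component in any multipath containing it). But you acknowledge yourself that the resulting mapping cone is over $X(\tT_n\setminus e)$, which is no longer a tournament, so the induction does not close, and the proposed discrete Morse matching is described only in outline with its acyclicity and the location of critical cells left unverified. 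As written, the argument that $X(\tT_n)$ is contractible or a wedge of spheres is incomplete, and this is precisely the step the theorem rests on. To repair this, you should either carry out the Morse-theoretic argument in full or, as the paper does, simply cite the known result on multipath complexes of transitive tournaments.
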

\begin{proof}
     Let $\tT_n$ be the transitive tournament on $n+1$ vertices  (with only edges of type $(i,j)$ for $i<j$). Then, we can identify  its blow-up with the bipartite  graph $\tB_n$ -- see also Figure~\ref{fig:blowup}. In light of Theorem~\ref{thm:multi=match}, the multipath complex of $\tT_n$ is isomorphic to the matching complex of $\iota(\tB_n)$. The statement now follows directly from~\cite[Theorem~5.1]{jason}. 
\end{proof}

\begin{rem}
    In view of Theorem~\ref{thm:matchingBn} and \cite[Lemma~5.7]{jason}, we get that the matching complexes of bipartite graphs obtained from $\tB_n$ by deletion of a subset of vertices in $\{ p_1,\dots,p_n\}$ are either contractible or homotopy equivalent to wedges of spheres. 
\end{rem}

 Let $\tG$ be a digraph without oriented cycles. Assume that, for each embedded cycle (possibly of length~$2$), the induced orientation on it is not alternating. In such case, we say that $\tG$ is a \emph{digraph without oriented and alternating cycles}. In particular, $\tG$ does not have pairs of edges of the form $(v,w)$ and $(w,v)$.
With this terminology, we directly get the following: 

\begin{thm}\label{thm:Hilbertarb}
    Let $\tG$ be a digraph  without any oriented or alternating cycle. 
    Then, the multipath complex of $\tG$ is either contractible or a wedge of spheres.
\end{thm}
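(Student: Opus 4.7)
The plan is to combine Theorem~\ref{thm:multi=match} with the dynamical module decomposition. Since $\tG$ has no oriented cycles, Theorem~\ref{thm:multi=match} yields $X(\tG)\cong M(\iota(B(\tG)))$, so the task reduces to showing that the matching complex of the bipartite graph $\iota(B(\tG))$ is contractible or a wedge of spheres. I would further decompose $\tG$ into its dynamical modules $\tM_1,\ldots,\tM_k$ via \cite[Theorem~1]{jason}, as in the proof of Proposition~\ref{prop:multipathmatchingtree}, to obtain $X(\tG)\cong X(\tM_1)\ast\cdots\ast X(\tM_k)$. Since joins preserve both contractibility and the class of wedges of spheres (the join of a contractible space with anything is contractible, and $(\vee_i S^{n_i})\ast(\vee_j S^{m_j})\simeq\vee_{i,j}S^{n_i+m_j+1}$), it then suffices to verify the claim for each $\tM_i$, each of which inherits the hypothesis from $\tG$.

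For stable dynamical modules the argument is clean: $\tM_i$ is alternating, so $X(\tM_i)\cong M(\iota(\tM_i))$ by \cite[Lemma~4.8]{jason} as already used in the proof of Proposition~\ref{prop:multipathmatchingtree}. In an alternating digraph every undirected cycle is alternating by bipartiteness, so the hypothesis forces $\iota(\tM_i)$ to be a forest, and its matching complex is contractible or a wedge of spheres by \cite[Theorem~4.13]{MR2426164}, exactly as in Corollary~\ref{cor:forestscontrspheres}.

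The main obstacle is the case of non-stable modules: small explicit examples (e.g., one unstable vertex glued between two sources and two sinks with bypass edges) show that $\iota(B(\tM_i))$ may already be an even cycle $C_{2k}$, and more generally need not be a forest, so the strategy above does not apply verbatim. What is needed is a structural result on the matching complexes of the bipartite graphs $\iota(B(\tM_i))$ that arise as blow-ups under the present hypotheses, extending the approach used for the ladders $\tB_n$ in Theorem~\ref{thm:matchingBn}. I expect this step to proceed either via discrete Morse theory on $X(\tM_i)$, exploiting the absence of alternating cycles to build an acyclic matching on the face poset, or via a generalization of \cite[Theorem~5.1]{jason} identifying the relevant class of bipartite graphs (which includes even cycles and their one-point unions) as having matching complexes that are contractible or wedges of spheres. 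This is the technical core of the proof.
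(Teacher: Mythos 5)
Your overall strategy (reduce to matching complexes of $\iota(B(\tG))$ via Theorem~\ref{thm:multi=match}) is the same as the paper's, and the dynamical-module detour, while unnecessary, is harmless. The paper's argument is much shorter than yours: it simply asserts that, because every embedded cycle of $\tG$ contains a vertex with both indegree and outdegree nonzero, blowing up breaks every cycle, hence $B(\tG)$ is a forest, and concludes by \cite[Theorem~4.13]{MR2426164}.

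However, the ``obstacle'' you flag for non-stable modules is genuine, and it is in fact a gap in the published proof, not just in your attempt. Take $\tG$ with vertices $a,b,c,d,e$ and edges $(a,c),(b,c),(c,d),(c,e),(a,d),(b,e)$ --- exactly the ``dandelion with bypass edges'' you describe. This digraph has no oriented cycle and no alternating cycle (its only embedded cycles are the two triangles $a\text{-}c\text{-}d$ and $b\text{-}c\text{-}e$, and $c$ is a pass-through vertex in both). Yet $\iota(B(\tG))$ is the hexagon $C_6$: the paper's argument breaks each embedded cycle of $\tG$ at $c$, but it does not account for cycles of $B(\tG)$ that project to \emph{non-simple} closed walks of $\tG$ (here the boundary of the two triangles glued along $c$). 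So the assertion ``the blow-up of $\tG$ is a directed forest'' is false as stated. The conclusion of the theorem still holds in this example ($M(C_6)\simeq S^1\vee S^1$), and one can check that under the hypothesis $\iota(B(\tG))$ always has girth $\geq 6$, so the statement may well be true --- but establishing it requires precisely the extra structural input on matching complexes of such bipartite graphs that you identify as the ``technical core.'' You were right to be skeptical: that core is missing both in your proposal and in the paper's proof, so neither argument is complete as written.
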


\begin{proof}
    By assumption, the induced orientation on each embedded cycle is not alternating. This means that, for each embedded cycle~$\tH$, we can find a vertex~$v_\tH$ with both indegree and outdegree non zero. Blowing-up $\tG$ at $v_\tH$ breaks the cycle $\tH$.
    Therefore, the blow-up of $\tG$ is a directed forest. The statement follows by  Theorem~\ref{thm:multi=match} and \cite[Theorem~4.13]{MR2426164}.
\end{proof}

The class of digraphs without oriented and alternating graphs is not big. For example, cones of cycles or of trees with at least an unstable vertex contain oriented or alternating cycles. 
Therefore, we can not apply Theorem~\ref{thm:Hilbertarb} to such digraphs, and cone graphs may contain torsion. However, in complete analogy with Proposition~\ref{prop:torsionmultipaths}, we get that such torsion is also bounded:

\begin{prop}
For every pair of integers $k, g\geq 0$, there exists $m = m(g,k) \in \bZ$ which annihilates the torsion subgroup of $\H_i(X(\Cone(\tQ));\bZ)$, for each quiver~$\tQ$  of genus at most $g$.     
\end{prop}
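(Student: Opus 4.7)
The plan is to imitate, almost verbatim, the argument of Proposition~\ref{prop:torsionmultipaths}, but replacing the ambient category $\Quiver_{\leq g}^{\op}$ with $\Cone(\Quiver_{\leq g})^{\op}$ and invoking Proposition~\ref{prop:cones} and Proposition~\ref{prop:ConeEdge} in place of Theorem~\ref{thm:contractcatfg} and Proposition~\ref{prop:edgemodfg}. The index in the statement should be $i$ (rather than $k$), matching the homological degree $i$ appearing in $\H_i$.

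First I would check that the assignment $\tQ\mapsto X(\Cone(\tQ))$ extends to a functor $\Cone(\Quiver_{\leq g})^{\op}\to\mathbf{SimpCompl}$. Every base minor morphism $\Cone(\tQ)\to\Cone(\tQ')$ is, in particular, a minor morphism of quivers, and by Lemma~\ref{lem:functX} it induces a simplicial map between the corresponding multipath complexes after passing to opposites. Composing with the homology functor in a fixed degree $i$, we obtain a $\Cone(\Quiver_{\leq g})^{\op}$-module $\mathcal{H}_i:\tQ\mapsto \H_i(X(\Cone(\tQ));\bZ)$.

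Next, following the proof of Proposition~\ref{prop:homologyfgmulti}, I would observe that $\mathcal{H}_i$ is a subquotient of the $i$-fold tensor power of the edge module $\tQ\mapsto \bZ^{|E(\Cone(\tQ))|}$ on $\Cone(\Quiver_{\leq g})^{\op}$. By Proposition~\ref{prop:ConeEdge}, this tensor power is a finitely generated $\Cone(\Quiver_{\leq g})^{\op}$-module. Since $\Cone(\Quiver_{\leq g})^{\op}$ is quasi-Gr\"obner by Proposition~\ref{prop:cones}, Theorem~\ref{rem:qGrobner is Noeth} implies that every subquotient of a finitely generated module is itself finitely generated; hence so is $\mathcal{H}_i$, and in turn so is the torsion submodule $T_i\subseteq \mathcal{H}_i$.

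Finally, I would apply Lemma~\ref{lem:fingengen} to produce finitely many quivers $\tQ_1,\dots,\tQ_k$ of genus at most $g$ together with a surjection
\[
\bigoplus_{j=1}^{k} T_i(\Cone(\tQ_j))\twoheadrightarrow T_i(\Cone(\tQ)),
\]
for every $\tQ\in\Cone(\Quiver_{\leq g})$. Taking $m(g,i)$ to be the least common multiple of the exponents of the finitely many finitely generated abelian groups $T_i(\Cone(\tQ_j))$ (which are finite since they are torsion subgroups of homology of finite simplicial complexes) yields the required universal annihilator. The main thing to verify carefully is that base minor morphisms really do induce maps of multipath complexes compatibly enough for $\mathcal{H}_i$ to be a functor on $\Cone(\Quiver_{\leq g})^{\op}$, and that the subquotient claim for $\mathcal{H}_i$ goes through with edges of cones, not just edges of the base; both reduce to the corresponding statements for $\Quiver_{\leq g}^{\op}$ already established.
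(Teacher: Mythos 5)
Your argument is correct and is essentially the paper's own proof: the paper simply says to use Proposition~\ref{prop:ConeEdge} and then proceed as in the bounded-torsion argument for matching/multipath complexes (i.e.\ exactly the steps you spell out — functoriality, subquotient of the edge-module tensor power, Noetherianity from Proposition~\ref{prop:cones} via Theorem~\ref{rem:qGrobner is Noeth}, finite generation of the torsion submodule, and the lcm of exponents of finitely many generators). Your remark that the homological index in the statement should be $i$ rather than $k$ is also a fair observation about the statement as written.
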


\begin{proof}
    Use Proposition~\ref{prop:ConeEdge} and then proceed as for \cite[Theorem~1.2]{miyata2023graph}.
\end{proof}

\section{Applications to magnitude homology}\label{sec:magnitude}

We start by recalling the definition of magnitude homology, following~\cite{richardHHA, asao}. 

A directed graph $\tG$ can be seen as a metric space with the path metric~$d$, where $d(v,w)$ is the minimum length across all directed paths between $v$ and $w$. If $v$ and $w$ are not connected by a directed path, we set $d(v,w)\coloneqq \infty$. 
Given a non-negative integer $k$ and a commutative ring $R$, let $\Lambda_k(\tG;R)\coloneqq R\langle (v_0,\dots, v_k)\mid v_i\in V(\tG)\rangle$ be the $R$-module freely generated by $(k+1)$-tuples of vertices of $\tG$. We admit the empty tuple, and set $\Lambda_k(\tG;R)\coloneqq 0$ for $k\leq -2$.
For a $k$-tuple $(v_0,\dots,v_k)$ of vertices of $\tG$, with $v_i\neq v_{i+1}$ and $d(v_i,v_{i+1})<\infty$ for each $i$, the \emph{length} of $(v_0,\dots,v_k)$ in $\tG$ is the number 
\[
\ell(v_0,\dots,v_k)\coloneqq \sum_{i=0}^{k-1}d(v_i,v_{i+1}) \ .
\]
Furthermore, we  define a differential on $\Lambda_k(G;R)$ by setting
    \[
    \delta(v_0,\dots,v_k)\coloneqq \sum_{i=1}^{k-1} (-1)^i\delta_i(v_0,\dots,v_k) \ ,
    \]
    where $\delta_i(v_0,\dots,v_k)=(v_0,\dots,v_{i-1},v_{i+1}, \dots, v_k)$  if $\ell(v_0,\dots,v_k)=l=\ell(v_0,\dots,v_{i-1},v_{i+1}, \dots, v_k)$, and it is set to $0$ otherwise.

  Consider the submodule 
$
I_k(\tG;R)\coloneqq R\langle (v_0,\dots, v_k)\mid v_i=v_{i+1} \text{ for some } i \rangle
$
of $\Lambda_k(\tG;R)$, where $I_0(\tG)$ is set to~$ 0$.   
The family of such  modules can be equipped with the differential $\delta$, yielding a chain complex. As we have the inclusions $I_k(\tG;R)\subseteq \Lambda_k(\tG;R)$ for all $k$, we can form the quotient chain complex with modules $R_k(\tG;R)\coloneqq \Lambda_k(\tG;R)/ I_k(\tG;R)$. The magnitude chain complex $\MC_{k,l}(\tG;R)$ is defined as the submodule of $R_k(\tG;R)$ given by all those tuples of length~$l$;  this is compatible with the chain complex structure -- {cf.}~\cite[Lemma~2.14]{asao}.
Hence, the pair $(\MC_{*,l}(\tG;R), \delta)$ is a chain complex, and its homology called \emph{magnitude homology} (of~$\tG$) with coefficients in $R$.

\begin{rem}
    A contraction $\phi\colon \tG\to \tH$ of directed graphs induces a chain map
\[
\phi_\# \colon \MC_{*,*}(\tG;R)\to \MC_{*,*}(\tH;R) 
\]
which, to the tuple $(v_0,\dots,v_k)$ of $\tG$, associates the tuple $(\phi(v_0),\dots,\phi(v_k))$ if $\ell(\phi(v_0),\dots,\phi(v_k))= \ell(v_0,\dots,v_k)$, and it is set to be $0$ otherwise. The map $\phi_\#$ is a chain map, as it commutes with the differential $\delta$, and it induces a map
in magnitude homology.
\end{rem}

 For the following result, we refer the reader to \cite[Proposition~3.3]{richardHHA} or \cite[Lemma~3.6]{asao}.

\begin{prop}
    Magnitude homology is a functor
    \[
    \MH_{*,*}\colon \CDigraph\to \mathbf{BiGrMod}_R
    \]
    from the contraction category of digraphs to the category of bigraded $R$-modules.
\end{prop}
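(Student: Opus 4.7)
The plan is to verify the four functoriality axioms implicit in the preceding remark and then pass to homology. The geometric input is the observation that every contraction $\phi\colon \tG\to \tH$ is $1$-Lipschitz with respect to the path metric: any directed path in $\tG$ from $v$ to $w$ projects, after collapsing the contracted edges, to a directed walk in $\tH$ from $\phi(v)$ to $\phi(w)$ of length at most the original, so $d_\tH(\phi(v),\phi(w))\leq d_\tG(v,w)$. Summing along a tuple gives the length inequality $\ell(\phi(v_0),\dots,\phi(v_k))\leq \ell(v_0,\dots,v_k)$, which shows that the condition built into the definition of $\phi_\#$ in the preceding remark singles out exactly the part of the tuple that preserves bigrading.

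Next, I would check that $\phi_\#$ descends to the quotient $R_k(\tG;R)=\Lambda_k(\tG;R)/I_k(\tG;R)$: if $(v_0,\dots,v_k)$ has $v_i=v_{i+1}$ for some $i$, then $\phi(v_i)=\phi(v_{i+1})$, and either $\phi_\#$ is zero on this tuple (when the length drops) or its image lies in $I_k(\tH;R)$. Together with the length preservation, this shows that $\phi_\#$ restricts to a morphism of bigraded $R$-modules $\MC_{k,l}(\tG;R)\to \MC_{k,l}(\tH;R)$ for every pair $(k,l)$.

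Third, I would verify the chain map identity $\phi_\#\circ \delta=\delta\circ \phi_\#$ term by term on each $\delta_i$. The crucial point is that if $\ell(\phi(v_0),\dots,\phi(v_k))=\ell(v_0,\dots,v_k)$ then the $1$-Lipschitz inequality must be an equality on every consecutive pair, i.e.~$d_\tH(\phi(v_i),\phi(v_{i+1}))=d_\tG(v_i,v_{i+1})$ for all $i$; this makes the interaction between the vanishing condition in $\delta_i$ (omit an index only when the length is unchanged) and the vanishing condition in $\phi_\#$ (apply only when the total length is unchanged) transparent. This is the main technical obstacle, but it reduces to a careful but routine case analysis.

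Finally, functoriality $(\psi\circ\phi)_\#=\psi_\#\circ\phi_\#$ and $(\id_\tG)_\#=\id_{\MC_{*,*}(\tG;R)}$ follow immediately from the definition, using that $1$-Lipschitz maps compose and that equality of lengths through $\psi\circ\phi$ forces equality through both $\phi$ and $\psi$ separately. Passing to homology yields a well-defined functor $\MH_{*,*}\colon \CDigraph\to \mathbf{BiGrMod}_R$, as claimed.
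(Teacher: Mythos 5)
The paper does not actually prove this proposition; it merely cites \cite[Proposition~3.3]{richardHHA} and \cite[Lemma~3.6]{asao}, with the preceding remark only displaying the formula for the chain map $\phi_\#$ and asserting that it commutes with $\delta$. Your proposal supplies a proof along exactly the lines those references use, and the overall structure is sound: the $1$-Lipschitz property of contractions, the consequence that a globally preserved length forces preservation of every consecutive distance, the termwise check of $\phi_\#\delta_i = \delta_i\phi_\#$, and the straightforward functoriality argument.

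Two small points worth tightening. First, the quotient step as written is slightly misplaced: generators of $\MC_{k,l}(\tG;R)$ already satisfy $v_i\neq v_{i+1}$ with $d(v_i,v_{i+1})<\infty$, so the concern about tuples with consecutive repetitions in the source never arises. The issue you actually need to address there is that the \emph{image} tuple $(\phi(v_0),\dots,\phi(v_k))$ has no consecutive repetitions, and this is exactly your later observation that length preservation forces $d(\phi(v_i),\phi(v_{i+1}))=d(v_i,v_{i+1})\geq 1$ for every $i$; that observation belongs at the well-definedness step, not only in the chain-map step. Second, the ``routine case analysis'' for $\phi_\#\delta = \delta\phi_\#$ hides the one nontrivial inequality: when $\ell(\phi(v_0),\dots,\phi(v_k))<l$ and some face $\delta_i$ is nonzero in $\tG$, you need to show $\phi_\#$ still kills that face, which follows from $\ell(\phi(v_0),\dots,\widehat{\phi(v_i)},\dots,\phi(v_k))\leq \ell(\phi(v_0),\dots,\phi(v_k))<l$ via the triangle inequality $d(\phi(v_{i-1}),\phi(v_{i+1}))\leq d(\phi(v_{i-1}),\phi(v_i))+d(\phi(v_i),\phi(v_{i+1}))$. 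Spelling out this case would turn your sketch into a complete argument.
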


The dual version of magnitude homology, called \emph{magnitude cohomology}
\[ {\rm MC}_{l}^{k}(\tG; R) \coloneqq {\rm Hom}({\rm MC}_{l,k}(\tG; R); R)\ ,\]
was introduced in~\cite{MagnitudeCohomology}. It also 
yields a functor 
  $
    \MH_{*}^{*}\colon \CDigraph^{\op} \to \mathbf{BiGrMod}_R 
    $.

    We have recalled here the definition of magnitude homology of directed graphs. Almost \emph{verbatim} we could have defined magnitude homology of undirected graphs. In fact, the two notions are related:  
    
\begin{rem}\label{rem:magnitudegraphsquivers}
    Let $\tG$ be an (undirected) graph. Then, in the notation of Remark~\ref{rem:adjunctiongraphs}, we have that the magnitude (co)homology of the digraph~$\rho(\tG)$ coincides with the magnitude (co)homology (of undirected graphs) of $\tG$ -- see also \cite[Remark~2.16]{asao}. 
\end{rem}

More generally,  the definitions of magnitude homology and magnitude cohomology can also be  extended to quivers, yielding functors on the whole category $\CQuiver$ of quivers and contractions.   Let $R$ be a  commutative Noetherian ring, with identity. Then, we have the following extension of \cite[Theorem~3.11]{torsionCC}:

\begin{thm}\label{thm:mgfg}
The $\CQuiver^{\op}_{\leq g}$-module  ${\rm MH}^{k}_{l}(-;R)\colon \CQuiver^{\op}_{\leq g}\to \Mod_R$ is finitely generated. 
\end{thm}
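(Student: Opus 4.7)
The plan is to adapt the proof of the analogous statement for undirected graphs, namely \cite[Theorem~3.11]{torsionCC}, to the setting of quivers, using the Noetherian machinery established in Section~\ref{sec:Grobner_digraph} together with the finite generation of vertex and edge modules (Propositions~\ref{prop:edgemodfg} and~\ref{thm:Vkfg}).

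First, I would realise the cochain module $\MC^k_l(-;R)$ as a subquotient of a finitely generated $\CQuiver^{\op}_{\leq g}$-module. By construction, $\MC_{k,l}(\tG;R)$ arises as a subquotient of $\Lambda_k(\tG;R) \cong \mathcal{V}(\tG)^{\otimes(k+1)}$, namely as the submodule (of a quotient) spanned by those $(k+1)$-tuples with consecutive entries distinct and total length exactly $l$. Dualising, $\MC^k_l(\tG;R)$ embeds into the $R$-dual of this subquotient, and the length constraint partitions the relevant tuples according to their distance profile $(d_0, \dots, d_{k-1})$ with $d_i \geq 1$ and $\sum_i d_i = l$. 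Since there are only finitely many such profiles, the cochain module decomposes into a finite direct sum of pieces, each controlled by finitely many ``universal'' quivers encoding the possible shortest-path configurations of the corresponding profile, playing a role analogous to that of the graph $\tS(m_1, m_2, m_2', m_3)$ in the proof of Proposition~\ref{thm:Vkfg}.

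Next, since Proposition~\ref{thm:Vkfg} is stated over $\Quiver^{\op}_{\leq g}$ while magnitude cohomology is only functorial with respect to contractions, finite generation must be transported along the inclusion $\CQuiver^{\op}_{\leq g} \hookrightarrow \Quiver^{\op}_{\leq g}$. This inclusion is essentially surjective and satisfies property (F) by Lemma~\ref{lem:leqg}, so restriction preserves finite generation (cf.~the proposition following Definition~\ref{def:propertyF}). Combining these observations with Theorem~\ref{thm:contractcatfg} (asserting that $\CQuiver^{\op}_{\leq g}$ is quasi-Gr\"obner) and Theorem~\ref{rem:qGrobner is Noeth}, we conclude that the subquotient $\MC^k_l(-;R)$ of a finitely generated module is itself finitely generated.

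The main technical obstacle lies in the explicit identification of the first step: one must construct the universal witnessing quivers for each distance profile with enough care to obtain a sufficiently fine control over the generators. Reproducing the argument of \cite{torsionCC} in the quiver setting is then mostly routine, but requires careful bookkeeping of oriented shortest paths in the role of undirected edges, together with the distinction between source and target vertices at each step of the tuple, as already seen in Proposition~\ref{thm:Vkfg}.
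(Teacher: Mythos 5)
Your proposal takes essentially the same route as the paper: the paper likewise deduces the theorem from the quasi-Gr\"obner property of $\CQuiver^{\op}_{\leq g}$ (Theorem~\ref{thm:contractcatfg} together with Theorem~\ref{rem:qGrobner is Noeth}), the finite generation of $\Hom(\mathcal{V}^{\oplus k},R)$ (Proposition~\ref{thm:Vkfg}), and an adaptation of the subquotient argument of \cite[Proposition~3.10]{torsionCC} realizing the magnitude cochain modules inside duals of vertex modules. Your explicit appeal to Lemma~\ref{lem:leqg} and property~(F) to transport finite generation from $\Quiver^{\op}_{\leq g}$ to $\CQuiver^{\op}_{\leq g}$ makes precise a step the paper leaves implicit, and the remaining bookkeeping you defer to the quiver adaptation of \cite{torsionCC} is exactly how the paper concludes as well.
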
 

\begin{proof}
  By Theorem~\ref{thm:contractcatfg}, the (opposite) category $\CQuiver^{\op}_{\leq g}$ of quivers of bounded genus, and contractions, is quasi-Gr\"obner. Hence, subquotients of finitely generated $\CQuiver^{\op}_{\leq g}$-modules are finitely generated. By Theorem~\ref{thm:Vkfg}, the $\CQuiver^{\op}_{\leq g}$-module $\Hom(\mathcal{V}^{\oplus k},R)$  is finitely generated. The statement now follows using the same arguments of   \cite[Proposition~3.10]{torsionCC}, adapted to the case of quivers.
 \end{proof}

Using Theorem~\ref{thm:mgfg}, we can extend the results of \cite{torsionCC} to the more general setting of quivers. In particular, a straightforward adaptation of the arguments in \cite{torsionCC} yields:

\begin{cor}\label{cor:torsionmagnitude}
For every pair of integers $k, g\geq 0$, there exists $m = m(g,k) \in \bZ$ which annihilates the torsion subgroup of $\MH^{k}_{*}(\tG;\bZ)$, for each quiver~$\tG$ of genus at most $g$. 
\end{cor}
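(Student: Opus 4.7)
The plan is to mirror the proof of Proposition~\ref{prop:torsionmultipaths}, using Theorem~\ref{thm:mgfg} in place of Proposition~\ref{prop:homologyfgmulti}. First, I would package the torsion degree-by-degree: for each fixed $k$, define the functor
\[
T^{k}(\tG) \;:=\; \mathrm{tors}\bigl(\,\MH^{k}_{*}(\tG;\bZ)\,\bigr) \;=\; \bigoplus_{l\geq 0}\mathrm{tors}\bigl(\MH^{k}_{l}(\tG;\bZ)\bigr),
\]
which, being a submodule of a $\CQuiver^{\op}_{\leq g}$-module, is itself a $\CQuiver^{\op}_{\leq g}$-module.

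Next, I would establish that $T^{k}$ is finitely generated. This proceeds in two steps. By Theorem~\ref{thm:mgfg} (applied after summing over the length grading $l$, using that the ambient module $\Hom(\mathcal{V}^{\oplus(k+1)},\bZ)$ of Theorem~\ref{thm:Vkfg} does not depend on $l$), the functor $\MH^{k}_{*}(-;\bZ)$ is finitely generated as a $\CQuiver^{\op}_{\leq g}$-module. Since $\CQuiver^{\op}_{\leq g}$ is quasi-Gr\"obner by Theorem~\ref{thm:contractcatfg}, Theorem~\ref{rem:qGrobner is Noeth} then yields that all its submodules, and in particular $T^{k}$, are finitely generated. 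By Lemma~\ref{lem:fingengen}, one may therefore select finitely many quivers $\tQ_{1},\dots,\tQ_{n}$ of genus at most $g$ whose images generate $T^{k}$, so that every $T^{k}(\tG)$ is a quotient of $\bigoplus_{j=1}^{n} T^{k}(\tQ_{j})$.

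Finally, for each $\tQ_{j}$, the abelian group $\MH^{k}_{*}(\tQ_{j};\bZ)$ is finitely generated over $\bZ$: indeed $\tQ_{j}$ has finitely many vertices, so there are only finitely many $(k+1)$-tuples contributing to $\MC^{k}_{*}(\tQ_{j};\bZ)$ across all $l$. Hence $T^{k}(\tQ_{j})$ has a finite exponent $m_{j}$, and $m(g,k):=\mathrm{lcm}(m_{1},\dots,m_{n})$ annihilates $T^{k}(\tG)$ for every quiver $\tG$ of genus at most $g$, as required.

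The main obstacle is the uniformity-in-$l$ point mentioned above: Theorem~\ref{thm:mgfg} is literally stated for fixed $(k,l)$, but the desired annihilator $m(g,k)$ must control the torsion across \emph{all} lengths simultaneously. Overcoming this amounts to revisiting the proof imported from \cite[Proposition~3.10]{torsionCC} and observing that the finitely generated cover provided by Theorem~\ref{thm:Vkfg} depends only on~$k$, so the entire direct sum $\bigoplus_{l}\MH^{k}_{l}$ embeds as a subquotient of the same finitely generated $\CQuiver^{\op}_{\leq g}$-module. Once this uniform finite generation is granted, the Noetherian machinery and the finiteness of each generating quiver close the argument.
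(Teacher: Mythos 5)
Your proposal is correct and follows essentially the same route as the paper: the paper's (uncited, one-line) proof is precisely the ``straightforward adaptation'' of the argument of Proposition~\ref{prop:torsionmultipaths} via Theorem~\ref{thm:mgfg}, i.e.\ finite generation over the quasi-Gr\"obner category $\CQuiver^{\op}_{\leq g}$, Noetherianity for the torsion subfunctor, finitely many generating quivers, and the lcm of their (finite) exponents. Your explicit resolution of the uniformity-in-$l$ issue --- that the ambient finitely generated module coming from Theorem~\ref{thm:Vkfg} depends only on $k$, so $\bigoplus_l \MH^k_l$ is a subquotient of a single finitely generated module --- is exactly how the argument imported from \cite{torsionCC} works, and matches the paper's intent.
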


\begin{cor}\label{cor:jafbka}
    Let $\bK$ be a field, and $g\geq 0$. Then, there exists a polynomial $f\in \bZ[t]$ of degree at most~$g+1$, such that, for all quivers $\tG$ of genus at most $ g$, we have
    \[
    \dim_\bK \MH_*^k(\tG;\bK) \leq   f(\# E(\tG)) \ ,
\]
where $\# E(\tG)$ is the number of edges of $\tG$.
\end{cor}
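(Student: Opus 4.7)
The plan is to combine Theorem~\ref{thm:mgfg} with a polynomial-growth estimate for finitely generated $\CQuiver^{\op}_{\leq g}$-modules, in the spirit of \cite[Proposition~4.3]{proudfoot2022contraction} cited earlier in the paper.

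First, I would argue that $\MH^k_*(-;\bK)=\bigoplus_l \MH^k_l(-;\bK)$ is a finitely generated $\CQuiver^{\op}_{\leq g}$-module. At any fixed quiver $\tG$ only finitely many $l$ contribute (those with $l\leq k\cdot\mathrm{diam}(\tG)$), so $\MH^k_*(\tG;\bK)$ is finite-dimensional; more structurally, over a field, it is a subquotient of the total chain/cochain module $\MC_k(-;\bK)=\bK\langle (v_0,\dots,v_k)\colon v_i\neq v_{i+1}\rangle$, itself a subfunctor of $\cV^{\otimes(k+1)}$. By the tensor analogue of Theorem~\ref{thm:Vkfg}, proved by the same arguments as \cite[Theorem~3.11]{torsionCC} adapted to quivers, the enveloping module is finitely generated over $\CQuiver^{\op}_{\leq g}$. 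Noetherianity (Theorem~\ref{rem:qGrobner is Noeth}) together with the quasi-Gr\"obner property of $\CQuiver^{\op}_{\leq g}$ (Theorem~\ref{thm:contractcatfg}) then forces the subquotient $\MH^k_*(-;\bK)$ to be finitely generated as well.

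Second, using Lemma~\ref{lem:fingengen} I would pick a finite generating set $\tG_1,\dots,\tG_n$ and bound
\[
\dim_\bK \MH^k_*(\tG;\bK)\leq \sum_{i=1}^n |\mathrm{Hom}_{\CQuiver_{\leq g}}(\tG,\tG_i)|.
\]
A morphism $\tG\to \tG_i$ in $\CQuiver_{\leq g}$ is a contraction, determined by a spanning subforest of $\tG$ with $\#E(\tG)-\#E(\tG_i)$ edges together with an identification of the quotient with $\tG_i$; therefore
\[
|\mathrm{Hom}_{\CQuiver_{\leq g}}(\tG,\tG_i)|\leq \binom{\#E(\tG)}{\#E(\tG_i)}\cdot |\mathrm{Aut}(\tG_i)|,
\]
which is a polynomial in $\#E(\tG)$ of degree $\#E(\tG_i)$.

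The main obstacle is upgrading this to a polynomial of degree at most $g+1$. Following the analysis of \cite[Proposition~4.3]{proudfoot2022contraction} transferred from undirected graphs to the quiver setting, the generating set can be arranged so that $\#E(\tG_i)\leq g+1$: intuitively, every contraction-equivalence class of quivers of genus at most $g$ admits a minimal representative with at most $g+1$ edges (for instance, the two-vertex quivers with $g+1$ parallel edges realise genus $g$ with the fewest possible edges, and any further edge in a generator can be contracted away without affecting the generation). Summing the resulting polynomial bounds over the finitely many generators yields a polynomial $f\in\bZ[t]$ of degree at most $g+1$ with $\dim_\bK \MH^k_*(\tG;\bK)\leq f(\#E(\tG))$ for every quiver $\tG$ of genus at most $g$.
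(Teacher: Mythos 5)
Your overall strategy is right: bound $\dim_\bK\MH^k_*(\tG;\bK)$ via a finite generating set $\tG_1,\dots,\tG_n$ for an enveloping finitely generated $\CQuiver_{\leq g}^{\op}$-module, count $\Hom_{\CQuiver_{\leq g}}(\tG,\tG_i)$, and observe that $\bigl|\Hom_{\CQuiver_{\leq g}}(\tG,\tG_i)\bigr|\leq\binom{\#E(\tG)}{\#E(\tG_i)}\cdot|\mathrm{Aut}(\tG_i)|$, a polynomial in $\#E(\tG)$ of degree $\#E(\tG_i)$. This matches the intended argument and the reduction to the generation question is correct.

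The gap is exactly at the step you flagged as ``the main obstacle.'' Your justification that the generators can be taken with at most $g+1$ edges is not valid: you argue that every contraction-equivalence class has a minimal model with few edges and that ``any further edge in a generator can be contracted away without affecting the generation.'' This confuses the direction of functoriality. In $\CQuiver_{\leq g}^{\op}$ a contraction $\phi\colon\tG_i\to\tG_i'$ gives a morphism $\tG_i'\to\tG_i$, hence a map $\cM(\tG_i')\to\cM(\tG_i)$. A generator $x\in\cM(\tG_i)$ can be replaced by one supported on $\tG_i'$ only if $x$ lies in the \emph{image} of that map, which is by no means automatic; so generation is not stable under contracting edges of the generators. (Also, a minor point: the minimal genus-$g$ quiver on one vertex has $g$ self-loops, i.e.\ $g$ edges, so ``two-vertex quivers with $g+1$ parallel edges realise genus $g$ with the fewest possible edges'' is itself not accurate.) The correct source of the degree bound $g+1$ is the \emph{explicit} structure of the generators produced in the proof of Proposition~\ref{thm:Vkfg} (following~\cite{torsionCC}): the quivers $\tR(m)$ and $\tS(m_1,m_2,m_2',m_3)$, which have at most two vertices, and therefore at most $g+1$ edges once genus is bounded by $g$. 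You need to invoke that concrete generating family rather than a general minimal-model heuristic; with that substitution, the rest of your counting argument goes through. You should also reconcile the enveloping module you use ($\cV^{\otimes(k+1)}$) with the one used in Proposition~\ref{thm:Vkfg} and Theorem~\ref{thm:mgfg} ($\Hom(\cV^{\oplus k},R)$), since the dimension bound must ultimately be run against a module whose generators are known to have at most $g+1$ edges.
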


Corollary~\ref{cor:torsionmagnitude} says that says that the order of torsion classes in integral magnitude (co)homology of quivers of genus at most $g$, in a fixed (co)homological
degree $k$, is bounded. By Remark~\ref{rem:magnitudegraphsquivers} and using the results of \cite{MR4275098} and, in particular, \cite[Theorem~3.13]{SadzanovicSummers}, we know that magnitude homology and cohomology of quivers can contain torsion. However, we do not know if such torsion always comes from undirected graphs via the functor $\rho$, or not. 

\begin{q}
Is it possible to extend the constructions of \cite{MR4275098} and \cite{SadzanovicSummers} so to get torsion of directed graphs which does not come from the essential image of $\rho\colon \Graph\to \Quiver$ of Remark~\ref{rem:adjunctiongraphs}?
\end{q}

In view of the algebraicity results of Section~\ref{sec:multipath}, we ask the following:

\begin{q}
      Is the Hilbert series of the magnitude homology of (directed) graphs  algebraic?
\end{q}

\begin{q}
    The categories $\Graph$ and $\Quiver$ are related by the functors $\iota$ and $\rho$. What is the relation between generators of magnitude homology of graphs and magnitude homology of quivers?
\end{q}

Magnitude homology has strong connections with another homological invariant of digraphs, the so-called \emph{path homology} -- {cf.}~\cite{Grigoryan_first}. More precisely, path homology is the diagonal component of a certain spectral sequence involving magnitude homology~\cite{asao}. Despite torsion in magnitude homology of digraphs is yet unclear, when we turn to path homology we have more information; to be more precise, we have the following: 

\begin{rem}
Path homology of digraphs can contain arbitrary torsion: if $X$ is a simplicial complex, and $\tG_X$ the Hasse digraph associated to $X$, then the path homology of $\tG_X$ is isomorphic to the (simplicial) homology of $X$ -- see, {e.g.}, \cite{hha/1401800084}, or \cite{Grigoryan2016} for the cohomological version. 
\end{rem}

The following corollary tells us that also in the case of path homology of digraphs, torsion has to be bounded:

\begin{cor}\label{cor:torionPH}
For each $g,k$ positive integers, there exists a $d = d({g,k}) \in \bZ$ such that, for each digraph $\tG$ of genus~$g$, the torsion part of the path cohomology $\mathrm{PH}^k(\tG,\bZ)$  has exponent at most $d$.
\end{cor}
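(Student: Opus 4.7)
The plan is to use Asao's magnitude spectral sequence, which the authors have already cited in the preceding paragraph, in combination with Corollary~\ref{cor:torsionmagnitude} (applied to the cohomological version established in the proof of Theorem~\ref{thm:mgfg}). Specifically, Asao constructs a spectral sequence whose $E_1$ (or $E_2$) page is built from the magnitude (co)chain complex, and whose abutment has the path (co)homology of a digraph as a diagonal. Thus, for each fixed cohomological degree $k$, the path cohomology $\mathrm{PH}^k(\tG;\bZ)$ admits a finite filtration whose associated graded pieces are subquotients of magnitude cohomology groups $\MH^{k'}_{l}(\tG;\bZ)$, where $(k',l)$ ranges over a finite collection of bidegrees determined solely by $k$ (and not by $\tG$).

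The first step is to fix, for the given $k$, the finite set $\mathcal{S}_k$ of bidegrees $(k', l)$ that can contribute to $\mathrm{PH}^k$ via the spectral sequence filtration. The second step is to invoke Corollary~\ref{cor:torsionmagnitude} (in its cohomological formulation, which follows by the same argument used to prove Theorem~\ref{thm:mgfg}): for each $(k',l) \in \mathcal{S}_k$, there exists an integer $m(g, k', l)$ annihilating the torsion subgroup of $\MH^{k'}_l(\tG;\bZ)$ uniformly in $\tG$ of genus at most $g$. Let $m_0 = m_0(g,k)$ be the least common multiple of these finitely many integers.

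The third step is to pass from the $E_1$-page to the abutment. Any subquotient of an abelian group whose torsion has exponent at most $m_0$ has torsion with exponent at most $m_0$; hence each term of the $E_\infty$-page relevant to $\mathrm{PH}^k$ has torsion exponent at most $m_0$. The filtration on $\mathrm{PH}^k(\tG;\bZ)$ then has a bounded number $N = N(k)$ of steps, and at each extension the torsion exponent can multiply by at most $m_0$. Consequently, setting $d(g,k) \coloneqq m_0^{N}$ yields a uniform bound on the torsion exponent of $\mathrm{PH}^k(\tG;\bZ)$ for all digraphs $\tG$ of genus at most $g$.

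The main obstacle is matching the bookkeeping of Asao's spectral sequence to the present setup: one must verify that, for fixed $k$, only finitely many magnitude cohomology bidegrees contribute to $\mathrm{PH}^k$ and that the number of extension steps in the filtration is bounded independently of $\tG$. This is essentially automatic from the bigraded structure of the spectral sequence (the differentials strictly increase one index), but care is needed because the spectral sequence is not, a priori, first-quadrant in a way that forces collapse at a universal page. In practice, however, the filtration length on the diagonal $\mathrm{PH}^k$ depends only on $k$, which is sufficient to conclude.
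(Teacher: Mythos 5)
Your argument rests on a misreading of Asao's theorem. In Asao's work the path (co)homology group $\mathrm{PH}^k$ is \emph{not} the abutment of the spectral sequence: it is the diagonal entry $E_2^{k,k}$ of the \emph{second page}. As the paper's proof notes explicitly, ``path (co)homology groups appear as groups in the second page of a spectral sequence whose $0$-th page features magnitude chain groups.'' Consequently there is no filtration of $\mathrm{PH}^k$ by associated graded pieces to account for, no extension problem, and no need to multiply torsion exponents across filtration steps. Your step three (bounding the torsion of a filtered abutment by $m_0^N$) is an argument about an object that is not the one the corollary is about.

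The paper's own route is strictly simpler and sidesteps the issue you worried about (``care is needed because the spectral sequence is not, a priori, first-quadrant\dots''): since the $E_2$ page is a subquotient of the $E_0$ page, $\mathrm{PH}^k(\tG;\bZ)$ is a subquotient of the magnitude cochain groups. Theorem~\ref{thm:mgfg} (together with its proof, which exhibits $\MH^k_l$ as a subquotient of the finitely generated module $\Hom(\mathcal{V}^{\oplus k},\bZ)$) shows that these cochain groups form a finitely generated $\CQuiver^{\op}_{\leq g}$-module, so by Noetherianity $\mathrm{PH}^k(-;\bZ)$ is itself finitely generated, and the universal bound on torsion follows exactly as in Proposition~\ref{prop:torsionmultipaths} or Corollary~\ref{cor:torsionmagnitude}, with no dependence on whether or when the spectral sequence degenerates. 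To repair your proposal, replace the abutment/filtration picture by the observation that an $E_r$ term, for any $r$, is a subquotient of the $E_0$ term, and then invoke finite generation directly.
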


\begin{proof}
    Path (co)homology groups appear  as groups in the second page of a spectral sequence whose $0$-th page features magnitude chain groups. Turning the pages of the spectral sequence corresponds to taking  subsequent subquotients of the $0$-th page. Therefore, the statement follows thanks to Theorem~\ref{thm:mgfg}.
\end{proof}

\bibliographystyle{alpha}
\bibliography{biblio}
\end{document}